\newif\ifsiam
    \newcommand{\algcaption}[1]{\caption{#1}}
    \newtheorem{theorem}{Theorem}
    \newtheorem{lemma}[theorem]{Lemma}
    \newtheorem{corollary}[theorem]{Corollary}
    \newtheorem{proposition}[theorem]{Proposition}
    \newtheorem{definition}[theorem]{Definition}
    \newtheorem{algorithm}{Algorithm}
    \newcommand{\algcaption}[1]{#1}
    \newenvironment{proof}{\begin{trivlist}
        \item[\hskip\labelsep{\bf Proof.}]}{$\hfill\Box$\end{trivlist}}
    \newcommand{\funding}[1]{#1}
    \newenvironment{keywords}{\paragraph{Keywords:}}{\par}
    \newenvironment{AMS}{\paragraph{AMS:}}{\par}
    \newcommand{\email}[1]{#1}
    \newcommand{\headers}[2]{}
\numberwithin{theorem}{section}
\newcommand{\TheTitle}{Analysis of circulant embedding methods \\ for sampling stationary random fields}
\newcommand{\TheAuthors}{I. G. Graham, F. Y. Kuo, D. Nuyens, R. Scheichl, and I. H. Sloan}
\title{{\TheTitle}\thanks{\ifsiam Revised manuscript submitted to the editors \today. \else \today. \fi %
\funding{The authors acknowledge financial support from the Australian Research
    Council (FT130100655; DP150101770), the KU Leuven research fund
    (OT:3E130287; C3:3E150478), the Taiwanese National Center for
    Theoretical Sciences' Mathematics Division, and the Statistical
    and Applied Mathematical Sciences Institute (SAMSI) under its 2017
    Program on Quasi-Monte Carlo and High-Dimensional Sampling Methods for
    Applied Mathematics.}}}
\author{
I.G.\ Graham
\thanks{Dept.\ of Mathematical Sciences, University of Bath, Bath BA2
7AY, UK (\email{I.G.Graham@bath.ac.uk}).}
\and
F.Y.\ Kuo
\thanks{School of Mathematics and Statistics, University of NSW,
Sydney NSW 2052, Australia (\email{f.kuo@unsw.edu.au}).}
\and
D.\ Nuyens \thanks{Dept.\ of Computer Science, KU Leuven, Celestijnenlaan
200A, B-3001 Leuven, Belgium (\email{dirk.nuyens@cs.kuleuven.be}).  }
\and
R.\ Scheichl\thanks{Dept.\ of Mathematical Sciences, University of Bath,
Bath BA2 7AY, UK (\email{R.Scheichl@bath.ac.uk}).}
\and
I.H.\ Sloan
\thanks{School of Mathematics and Statistics, University of NSW,
Sydney NSW 2052, Australia (\email{i.sloan@unsw.edu.au}).}
}
\newtheorem{notation}[theorem]{Notation}
 \newtheorem{discussion}[theorem]{Discussion}
 \newenvironment{proofof}[1]{\begin{trivlist}
   \item[\hskip\labelsep{\bf Proof of {#1}.}]}{$\hfill\Box$\end{trivlist}}
 {\theoremstyle{plain} \theorembodyfont{\rmfamily}
 \newtheorem{remark}[theorem]{Remark}}
 {\theoremstyle{plain} \theorembodyfont{\rmfamily}
 \newtheorem{example}[theorem]{Example}}
\numberwithin{equation}{section}
\newcommand{\oZmd}{\overline{\bbZ}_m^d}
\newcommand{\rhoext}{\rho^{\mathrm{ext}}}
\newcommand{\Rext}{R^{\mathrm{ext}}}
\newcommand{\Bext}{B^{\mathrm{ext}}}
\newcommand{\Qext}{Q^{\mathrm{ext}}}
\newcommand{\Lambdaext}{\Lambda^{\mathrm{ext}}}
\newcommand{\lambdaext}{\lambda^{\mathrm{ext}}}
\newcommand{\cRext}{\mathcal{R}^{\mathrm{ext}}}
\newcommand{\eps}{\varepsilon}
\newcommand{\len}{{\ell}}
\newcommand{\Zbar}{\overline{Z}}
\newcommand{\bZbar}{\overline{\bsZ}}
\newcommand{\bk}{{\boldsymbol{k}}}
\newcommand{\bsk}{{\boldsymbol{k}}}
\newcommand{\br}{{\boldsymbol{r}}} 
\newcommand{\bszeta}{{\boldsymbol{\zeta}}}
\newcommand{\bsB}{\mathbf{B}}
\newcommand{\cN}{\mathcal{N}}
\newcommand{\Mat}{Mat\'{e}rn\ }
\newcommand{\bkappa}{{\boldsymbol{\kappa}}}
\newcommand{\bsgamma}{{\boldsymbol{\gamma}}}
\newcommand{\bsr}{{\boldsymbol{r}}}
\newcommand{\bsq}{{\boldsymbol{q}}}
\newcommand{\bst}{{\boldsymbol{t}}}
\newcommand{\bsv}{{\boldsymbol{v}}}
\newcommand{\bsV}{{\boldsymbol{V}}}
\newcommand{\bsx}{{\boldsymbol{x}}}
\newcommand{\bsy}{{\boldsymbol{y}}}
\newcommand{\bsY}{\boldsymbol{Y}}
\newcommand{\bsz}{{\boldsymbol{z}}}
\newcommand{\bsZ}{{\boldsymbol{Z}}}
\newcommand{\bsw}{{\boldsymbol{w}}}
\newcommand{\bszero}{{\boldsymbol{0}}}
\newcommand{\bsxi}{\boldsymbol{\xi}}
\newcommand{\rd}{\mathrm{d}}
\newcommand{\ri}{\mathrm{i}}
\newcommand{\bbR}{\mathbb{R}}
\newcommand{\bbZ}{\mathbb{Z}}
\newcommand{\bbE}{\mathbb{E}}
\newcommand{\calF}{\mathcal{F}}
\newcommand{\cF}{\mathcal{F}}
\newcommand{\calO}{\mathcal{O}}
\renewcommand{\Re}{\mathfrak{Re}}
\renewcommand{\Im}{\mathfrak{Im}}
\newcommand{\mask}[1]{}
\newcommand{\cR}{\mathcal{R}}
\newcommand{\wM}{\widetilde{\Psi}}
\newcommand{\wrho}{\widehat{\rho}}
\renewcommand{\forall}{\text{ for all }}
\newcommand{\bR}{\mathbb{R}}
\makeatletter\@addtoreset{equation}{section}\makeatother
\definecolor{darkred}{RGB}{139,0,0}
\definecolor{darkgreen}{RGB}{0,100,0}
\definecolor{darkmagenta}{RGB}{139,0,139}
\definecolor{darkpurple}{RGB}{110,0,180}
\definecolor{darkblue}{RGB}{40,0,200}
\definecolor{darkorange}{RGB}{170,100,30}
\newcommand{\ceil}[1]{\left\lceil#1\right\rceil}
\newcommand{\floor}[1]{\left\lfloor#1\right\rfloor}
\newcommand{\rcov}{r_{\mathrm{cov}}}
\begin{document}

\maketitle

\begin{abstract}
A standard problem in uncertainty quantification and in computational
statistics is the sampling of stationary Gaussian random fields with given
covariance in a $d$-dimensional (physical) domain. In many applications it
is sufficient to perform the sampling on a regular grid on a cube
enclosing the physical domain, in which case the corresponding covariance
matrix is nested block Toeplitz. After extension to a nested block
circulant matrix, this can be diagonalised by FFT -- the ``circulant
embedding method''. Provided the circulant matrix is positive definite,
this provides a finite expansion of the field in terms of a deterministic
basis, with coefficients given by i.i.d. standard normals. In this paper
we prove, under mild conditions, that the positive definiteness of the
circulant matrix is always guaranteed, provided the enclosing cube is
sufficiently large. We examine in detail the case of the Mat\'{e}rn
covariance, and prove (for fixed correlation length) that, as
$h_0\rightarrow 0$, positive definiteness is guaranteed when the random
field is sampled on {a} cube of size order $(1 + \nu^{1/2} \log
h_0^{-1})$ times larger than the size of the physical domain. (Here
$h_0$ is the mesh spacing of the regular grid and $\nu$ the Mat\'{e}rn
smoothness parameter.) We show that the sampling cube can become smaller
as the correlation length decreases when $h_0$ and $\nu$ are fixed. Our
results are confirmed by numerical experiments. We prove several results
about the decay of the eigenvalues of the circulant matrix. These lead
to the conjecture, verified by numerical experiment, that they decay
with the same rate as the Karhunen--Lo\`{e}ve eigenvalues of the
covariance operator. The method analysed here complements the numerical
experiments for uncertainty quantification in porous media problems in
an earlier paper by  the same authors in {\em J. Comp. Physics}. 230
(2011), pp. 3668--3694.
\end{abstract}

\begin{keywords}
Gaussian Random Fields, Circulant Embedding, Statistical Homogeneity, \Mat
Covariance, Fast Fourier Transform,  Analysis
\end{keywords}

\begin{AMS}
60G10, 
60G60, 
65C05, 
65C60 
\end{AMS}


\section{Introduction} \label{sec:Intro}

In recent years there has been a huge growth in interest in uncertainty
quantification (UQ) for physical models involving partial differential
equations. In this context, the forward problem of UQ consists of
describing the statistics of outputs (solutions) of a PDE model, given
statistical assumptions on its inputs (e.g., its coefficients). This has
led to the widespread study of model PDE problems where the coefficients
are given as random fields. Various flavours of solution method (e.g.,
Stochastic Galerkin or various sampling methods) have been formulated
under the assumption that the random coefficient field has a separable
expansion in physical/probability space -- for example a suitably
truncated Karhunen--Lo\'{e}ve (KL) expansion, \cite{GhSp:91},
\cite{LPS14}.

In its standard form, the KL expansion is in principle infinite, and
requires the computation of the (infinitely many) eigenpairs of the
integral operator with kernel given by the covariance of the field. It has
to be truncated to be computable, even when the field is only required at
a finite set of physical spatial points. However if it is known from the
outset that the coefficient field is only required at a finite set of
physical points (as is the case in typical implementations of finite
element methods for solving the PDE), then a different point of view
emerges. The required field is now a random vector; in the Gaussian case
it is characterised by its mean and covariance matrix. A real, for example
Cholesky, factorization of the covariance matrix provides a finite
separable expansion of the random vector, with no need for any truncation.

In the paper \cite{GrKuNuScSl:11}, the authors proposed a practical
algorithm for solving a class of elliptic PDEs
 with coefficients given by statistically homogeneous lognormal
random fields with low regularity.  Lognormal random fields are commonly
used in applications, for example in hydrology (see, e.g.,
\cite{NaHaSu:98a,NaHaSu:98b} and the references there).

The PDE was solved by piecewise linear finite elements on a uniform grid.
The stiffness matrix was obtained by an appropriate quadrature rule, with
the field values at the quadrature points being obtained via a
factorization of the covariance matrix using the circulant embedding
technique described below. The method was found to be effective even for
problems with high stochastic dimension, but \cite{GrKuNuScSl:11} did not
contain a convergence analysis of the algorithm. The main purpose of the
present paper is to provide an analysis for the circulant embedding part
of the algorithm of \cite{GrKuNuScSl:11}. The analysis of the
corresponding uncertainty quantification algorithm for the PDE is done
in~\cite{paper2}.

We consider here the fast evaluation of a Gaussian random field $Z(\bsx,
\omega)$ with prescribed mean $\Zbar(\bsx)$ and covariance
\begin{equation}\label{eq:covar}
  \rcov(\bsx,\bsx') \,:=\,
  \mathbb{E}[(Z(\bsx,\cdot)-\Zbar(\bsx))(Z(\bsx',\cdot)-\Zbar(\bsx')],
\end{equation}
where the expectation is with respect to the Gaussian measure. Throughout
we will assume that $Z$ is \emph{stationary} (see, e.g.,
\cite[p.~24]{Ad:81}), i.e., its covariance function satisfies
\begin{equation} \label{eq:defrho1}
 \rcov(\bsx,\bsx') \,=\, \rho(\bsx-\bsx'),
\end{equation}
for some function $\rho:\mathbb{R}^d\to\mathbb{R}$. Note that we assume
here that $\rho$ is defined on all of $\bR^d$, as it is in many
applications, although strictly speaking  we only need $\rho$ to be
defined on a sufficiently large ball. Note also that \eqref{eq:covar} and
\eqref{eq:defrho1} imply that $\rho$ is symmetric, i.e.,
\begin{equation} \label{eq:symm}
\rho(\bsx) = \rho(-\bsx), \quad \text{for all} \quad \bsx \in \mathbb{R}^d \ ,
\end{equation}
and that $\rho$ is also positive semidefinite (see \S
\ref{subsec:posdef}). Further assumptions on $\rho$ will be given below. A
particular case, to be discussed extensively, is the \Mat covariance
defined in Example~\ref{ex:Mat} below.

We shall consider the problem of evaluating
$Z(\bsx, \omega)$ at a uniform grid of
\[
 M \,=\, (m_0+1)^d
\]
points on the $d$-dimensional unit cube $[0,1]^d$, with integer $m_0$
fixed, and with grid spacing $h_0 := 1/m_0$. (The extension to general
tensor product grids is straightforward and not discussed here.) Denoting
the grid points by $\bsx_1,\bsx_2,\ldots,\bsx_M$, we wish to obtain
samples of the random vector:
\begin{equation*} 
 \bsZ(\omega) \,:=\, (Z(\bsx_1,\omega), \ldots , Z(\bsx_M, \omega))^{\top} \ .
\end{equation*}
This is a Gaussian random vector with mean $\overline{\bsZ} :=
(\overline{Z}(\bsx_1),\ldots,\overline{Z}(\bsx_M))^\top$ and a positive
semidefinite covariance matrix
\begin{equation} \label{eq:Rmatrix}
  R \,=\,  [\rho(\bsx_i-\bsx_j)]_{i,j=1}^M.
\end{equation}
Because of its finite length, $\bsZ(\omega)$ can be expressed exactly (but
not uniquely) as a linear combination of a finite number of i.i.d.\
standard normals, i.e., as
\begin{equation}\label{eq:Gauss_exp1}
 \bsZ(\omega) \,=\,  B \bsY(\omega)\  + \ \bZbar \  ,
 \quad \text{where} \quad  \bsY \sim \cN(\boldsymbol{0}, I_{s \times s}).
\end{equation}
for some real $M\times s$ matrix $B$ with $s\ge M$ satisfying
\begin{equation} \label{eq:Rfactor}
  R \,=\, BB^\top.
\end{equation}
To verify this construction, simply note that \eqref{eq:Gauss_exp1} and
\eqref{eq:Rfactor} imply
$$
  \mathbb{E}[(\bsZ-\bZbar)(\bsZ-\bZbar)^{\top}]
  \,=\, \mathbb{E}[B\bsY \bsY^\top B^{\top}]
  \,=\, B\,\mathbb{E}[\bsY \bsY^\top]B^{\top}=BB^{\top} \,=\, R,
$$
so ensuring that \eqref{eq:covar} is satisfied on the discrete grid.

In principle the factorization \eqref{eq:Rfactor} could be computed via a
Cholesky factorization (or even a spectral decomposition) of $R$ with
$s=M$, but this is likely to be prohibitively expensive, since $R$ is
large and dense. However, under appropriate ordering of the indices, $R$
is a nested block Toeplitz matrix and, as we will explain below, $R$ can
be embedded in a bigger $s \times s$ nested block circulant matrix whose
spectral decomposition can be rapidly computed using FFT with $\calO(s
\log s)$ complexity. A subtle but vital point is that while the covariance
matrix $R$ is automatically positive semidefinite, the extension to a
larger nested block circulant matrix may lose definiteness, yet the nested
block circulant matrix must be at least positive semidefinite for the
algorithm to work. Small deviations from positive
semidefiniteness are acceptable if one is prepared to accept the
incurred errors from omitted negative eigenvalues. That error can be
controlled via an a posteriori bound in terms of the negative
eigenvalues (see, e.g.,  \cite[\S 6.5]{LPS14}). In the present paper,
however, we insist on positive definiteness.

The principle of our extension of the covariance matrix $R$ is as follows.
(Full details are in \S\ref{subsec:extended} below.) We first embed the
unit cube $[0,1]^d$ in a larger cube $[0, \len]^d$ with side length $\len
= m h_0\ge 1$ for some integer $m\ge m_0=1/h_0$. Note that $\rho$ is
automatically defined on $[0,\len]^d$, since it is defined on all of
$\mathbb{R}^d$. Then we construct a $2\len$-periodic even symmetric
extension of $\rho$ on $[0,2\len]^d$, called $\rhoext$, see
\eqref{eq:rhotildef} below, that coincides with $\rho$ on $[0,\len]^d$.
The extended $s\times s$ matrix with
\begin{align} \label{eq:defs}
 s=(2m)^d
\end{align}
is then obtained by the analogue of formula~\eqref{eq:Rmatrix}, with
$\rho$ replaced by $\rhoext$. In Theorem \ref{positivityoftransf}, we
show, under quite general conditions, that if $\len$ (equivalently $m$) is
chosen large enough, this extension is necessarily positive definite. The
algorithm used in practice (Algorithm \ref{alg1} in \S \ref{sec:expect}
below) extends $\len$ cautiously through a sequence of increments in $m$
until positive definiteness is achieved.

To know that the resulting algorithm is efficient, we need a lower bound
on the value of $\len$ needed to achieve positive definiteness. Our second
set of theoretical results provides such bounds for the important \Mat
class of covariance functions, defined in \eqref{defmatern} below. In
Theorem~\ref{cor:matern-growth} we show that positive definiteness is
always achieved  with
\begin{align}
 \ell/\lambda  \ \geq  \  C_1\  + \ C_2\,  \nu^{1/2} \, \log\left( \max\{ {\lambda}/{h_0}, \, \nu^{1/2}\} \right) \ ,
 \label{eq:matlen}
\end{align}
where $\lambda$ is a parameter with the unit of length (the ``correlation
length''),  $\nu<\infty$ is the \Mat smoothness parameter and $C_1, C_2$
are constants independent of $h_0, \len, \lambda, \nu$ and variance
$\sigma^2$. Thus the required $\len$ grows very slowly in $1/h_0$ and can
get smaller as the correlation length decreases. There is some growth as
$\nu$ increases. However, the less smooth fields with $\nu$ small, and
with small correlation length $\lambda$, are the ones often found in
applications -- see the references in \cite{GrKuNuScSl:11}. In Theorem
\ref{thm:PDGauss} we discuss the same question  in the Gaussian case ($\nu
= \infty$). This theorem shows,  for example, that if $\lambda$ and $h_0$
both decrease but $\lambda/h_0$ is kept fixed (i.e., a fixed number of
grid points per unit correlation length), then the minimum value of $\ell$
needed for positive definiteness can decrease linearly in $\lambda$.

An additional benefit of the spectral decomposition obtained by applying
the FFT to the circulant matrix is that it allows us to determine
empirically which variables are the most important in the system. This
gives an ordering of the variables, which can be used to drive the design
of the Quasi-Monte Carlo algorithms (see, e.g., \cite{paper2}).

While circulant embedding techniques are well-known in the computational
statistics literature (e.g., \cite{ChWo:94,ChWo:97,DiNe:97,KrBo:14,
LPS14}), there is relatively little theoretical analysis of this
technique, the best existing references being Chan and Wood \cite{ChWo:94}
and Dietrich and Newsam \cite{DiNe:97}. First, \cite{ChWo:94} provides a
theorem in general dimension $d$ identifying conditions on the covariance
function which ensure that the circulant extension (which we shall
describe below) is positive definite for some sufficiently large~$\ell$.
The condition is similar to that provided in our Theorem
\ref{positivityoftransf} below, except that an additional assumption  on
the discrete Fourier transform (the ``spectral density'') of $\rho$ is
required in \cite{ChWo:94}. In our work we require  positivity of the {\em
continuous} Fourier transform which is automatically satisfied via
Bochner's theorem, due to the fact that $\rho$ is a covariance kernel. On
the other hand Dietrich and Newsam \cite{DiNe:97} provide more detailed
information about the behaviour of the algorithm by restricting the theory
to a 1-dimensional domain. They show that the discrete Fourier transform
of $\rho$ will be positive when $r_{\rm cov}$ is convex, decreasing and
non-negative. This automatically proves the success of the algorithm for
certain covariance kernels. However, two covariance kernels that are not
covered by this theory are the ``Whittle'' covariance and the Gaussian
covariance. These belong to the \Mat class introduced in Example
\ref{ex:Mat} below, with \Mat parameter $\nu = 1$ and $\nu = \infty$
respectively. Our theory covers the whole \Mat family and also describes
the behaviour of the embedding algorithm with respect to the parameters
$\nu$  (\Mat parameter) and  $\lambda$ (correlation length) as well as the
mesh size $h_0$. Finally we note that \cite{DiNe:97} also describes
embedding strategies which are more general than those which we describe
and analyse here, but without theory. 

We may compare the present approach with other methods for
the generation of Gaussian random fields. The principal limitations of
the present approach are that it requires the covariance function to be
stationary, and generates the random field only on a uniform tensor
product grid. The requirement of uniformity of the grid is not very
restrictive for PDE applications, as we show in \cite{paper2} where we
take the interpolation error into account for the error analysis. In
\cite{FeKuSl:17}, the uniform grid requirement is removed but at the
price of approximating the covariance matrix by an $H^2$-matrix, and
then obtaining an approximate square root of that $H^2$-matrix. The use
of $H$-matrices to promote the efficient computation of KL
eigenfunctions has previously been explored in
\cite{EiErUl:07,KoLiMa:09,HPS:15}. In \cite{ScTo:06}, fast multipole
methods were promoted to achieve the same objective. In all these
approaches, there is an inevitable need to truncate the KL series, but
on the other hand there is no essential restriction to stationary
covariance functions. We mention also the recent paper \cite{BaCoMi:16}
in which a related periodization procedure is applied (this time using a
smooth cut-off function), facilitating wavelet expansions of $Z$.

An interesting paper closely related to the present paper
is \cite{HPS:15}, where the authors propose and analyse an approximate,
pivoted Cholesky factorisation followed by a small eigendecomposition,
which provides in many cases a very efficient and accurate way to
approximate the random field. This approach does not require
stationarity or a uniform grid, but for efficiency reasons the
factorisation needs to be truncated after $K \ll M$ steps, where in the
simplest case $M$ is the number of grid points as in the present paper.
For fixed $K$, the accuracy of the truncated expansion depends on the
decay of the KL eigenvalues and the cost is $\mathcal{O}(K^2M)$.  When
the KL eigenvalues do not decay sufficiently fast, this limits the
possible accuracy or leads to prohibitively high costs.

The structure of this paper is as follows. The circulant embedding
algorithm is described in \S\ref{subsec:extended}. The general theory of
positive definiteness of the circulant extension of $R$ is given in
\S\ref{subsec:posdef}. General isotropic covariances (of which the \Mat is
an example) are studied in \S\ref{subsec:isot}, with the estimate
\eqref{eq:matlen} proved in Theorem~\ref{cor:matern-growth}. A key
ingredient in the theory of the related paper \cite{paper2} is an estimate
on the rate of decay of the  eigenvalues of the the circulant extension of
the original covariance matrix. We discuss this question in
\S\ref{sec:decay} where we prove several results about the eigenvalues of
the circulant matrix, leading to the conjecture that they decay at the
same rate as the Karhunen-Lo\`{e}ve eigenvalues of the continuous field.
Numerical experiments illustrating the theory are given in
\S\ref{sec:Numerical}.


\section{Circulant Embedding}
\label{sec:expect}

There are potentially several ways of computing the factorization
\eqref{eq:Rfactor}. Since $R$ is symmetric positive definite, its Cholesky
factorization $R=LL^{\top}$ yields \eqref{eq:Rfactor}, with $s=M$.
Alternatively, we can use the spectral decomposition $R= W \Lambda
W^{\top}$, with $W$ being the orthogonal matrix of eigenvectors, and
$\Lambda$ being the positive diagonal matrix of eigenvalues, again giving
\eqref{eq:Rfactor} with $s=M$, this time with $B = W \Lambda^{1/2}$. Given
that the matrix $R$ is large and dense, in both of these cases the
factorization will generally  be expensive. In the present paper  we use a
spectral decomposition of a circulant extension of $R$ to obtain
\eqref{eq:Rfactor}  with $s>M$ -- see Theorem \ref{thm:decomp} below.

\subsection{The extended matrix}
\label{subsec:extended}

Recall that we began with a uniform grid of points on the $d$-dimensional
unit cube $[0,1]^d$. The $M = (m_0+1)^d$ points $\{\bsx_i: i = 1, \ldots,
M\}$ are assumed to have spacing $h_0 :=1/m_0$ along each coordinate
direction. We then consider an enlarged cube $[0,\len]^d$ of edge length
\begin{align}
\label{eq:defell}
\len:=mh_0\ge 1,
\end{align}
 with integer $m\ge m_0$. We take the point of view
that $m_0$ is fixed (and hence so too is $h_0$), while $m$ is variable
(and hence so too is  $\ell$).

We index the grid points on the unit cube by an integer vector $\bk$,
writing
\begin{equation}\label{eq:gridptsm}
 \bsx_{\bk} \,:=\, h_0 \bk  \qquad\text{for}\quad \bk = (k_1, \ldots, k_d)  \in \{0 , \ldots, m_0\}^d .
\end{equation}
Then it is easy to see that (with analogous vector indexing  for the rows
and columns) the $M \times M$ covariance matrix $R$ defined in
\eqref{eq:Rmatrix} can be written as
\begin{equation} \label{eq:defR}
  R_{\bk, \bk'}
  \,=\, \rho\big({h_0} ({\bk-\bk'}) \big),
  \qquad \bk, \bk'  \in \{ 0, \ldots, m_0\}^d  .
\end{equation}
If the vectors $\bk$ are enumerated in lexicographical ordering, then $R$
is  a nested block Toeplitz matrix where the number of nested levels is
the physical dimension $d$. We remark that all indexing of matrices and
vectors by vector notation in this paper is to be considered in this way.

In the following it will be convenient to extend the definition of the
grid points \eqref{eq:gridptsm} to an infinite grid,
\begin{equation*} 
 \bsx_{\bk} \,:=\, h_0 \bk  \qquad\text{for}\quad \bk \in \bbZ^d .
\end{equation*}
Then, in order to define the extended matrix $\Rext$, we define a
$2 \len$-periodic  map on $\bbR$ by specifying its action on $[0,2 \len]$:
\begin{equation*}
  \varphi(x)
  \,:=\,
  \begin{cases}
    x          & \text{if}\quad 0\,\le\, x  \,\le\, \len ,\\
    2\len - x  & \text{if}\quad \len \,\le\, x  \,<\, 2\len .
  \end{cases}
\end{equation*}%
Now we apply this map componentwise and so define an extended version
$\rhoext$ of $\rho$ as follows:
\begin{equation}\label{eq:rhotildef}
 \rhoext(\bsx)
 \,:=\,
 \rho(\varphi(x_1),\ldots,\varphi(x_d))
 ,\qquad \bsx\in \bbR^d .
\end{equation}
Note that $\rhoext$ is $2\len$-periodic in each coordinate direction and
\begin{align} \label{eq:extprop}
 \rhoext(\bsx) \ =\  \rho(\bsx) \quad  \text{when} \quad   \bsx \in [0,\len]^d.
\end{align}
Then $\Rext$ is defined to be the $s\times s$ symmetric nested block
circulant matrix with $s = (2m)^d$, defined, analogously to
\eqref{eq:defR}, by
\begin{equation}\label{eq:Rext_def}
 \Rext_{\bk, \bk'}\,=\, \rho^{\rm{ext}}\big( {h_0} ({\bk - \bk'}) \big),
 \qquad \bk, \bk' \in \{0, \ldots,  2m-1\}^d  .
\end{equation}
Moreover, $R$ is the submatrix of $\Rext$ in which the indices are
constrained to lie in the range $\bk, \bk' \in \{ 0,\ldots, m_0\}^d$.

In the following it will be convenient to introduce the notation, defined
for any integer $m \ge 1$,
\begin{align*} 
  \bbZ_{2m}^d := \{ 0, \ldots, 2m-1 \}^d ,
 \quad  \quad \
  {\overline{\bbZ}_m^d :=
  \{ -m, \ldots ,  m-1\}^d}
  .
\end{align*}
Then we have the following simple result.

\begin{proposition} \label{prop:eigs} $\Rext$ has real eigenvalues
$\Lambdaext_\bk$ and corresponding normalised eigenvectors $\bsV_\bk$,
given, for $\bk \in {\bbZ}_{2m}^d$ by the formulae:
\begin{equation}\label{eq:eivsandeifs}
\Lambdaext_\bk={\sum_{\bk'\in \overline{\bbZ}_{m}^d}}\rho({h_0\bk'})
\exp\bigg({-2\pi \ri\frac{\bk\cdot \bk'}{2m}}\bigg),\quad
\text{and} \quad (\bsV_\bk)_{\bkappa}=\frac{1}{\sqrt{s}}\exp\bigg(2\pi \ri\frac{\bk\cdot\bkappa}{2m}\bigg),
\quad\bkappa \in {\bbZ}_{2m}^d,
\end{equation}
where $s$ is given in \eqref{eq:defs}.
\end{proposition}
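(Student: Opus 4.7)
Because $\rhoext(h_0\cdot)$ is $2\ell$-periodic in each coordinate (equivalently, $2m$-periodic as a function of the integer index), $\Rext$ is a nested block circulant matrix of order $s=(2m)^d$. Such matrices are simultaneously diagonalised by the multivariate discrete Fourier basis, and the proposition is the instance of this classical fact in the present indexing. I will verify directly that the asserted pair $(\Lambdaext_\bk,\bsV_\bk)$ satisfies $\Rext\bsV_\bk=\Lambdaext_\bk\bsV_\bk$, and that the eigenvectors are orthonormal.

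\textbf{Step 1 (diagonalisation by Fourier modes).} I would expand $(\Rext\bsV_\bk)_\bkappa$ as a sum over $\bk'\in\bbZ^d_{2m}$ and introduce the modular change of index $\bsj=\bkappa-\bk'\pmod{2m}$ in each coordinate. Since $\rhoext(h_0\cdot)$ and $\exp(-2\pi\ri\,\bk\cdot(\cdot)/(2m))$ are both $2m$-periodic in each coordinate of $\bsj$, the substitution leaves the sum unchanged. Factoring the $\bkappa$-dependent exponential $\exp(2\pi\ri\,\bk\cdot\bkappa/(2m))$ out of the sum exhibits $(\bsV_\bk)_\bkappa$ multiplied by
\[
\widetilde\Lambda_\bk \,=\, \sum_{\bsj\in\bbZ^d_{2m}} \rhoext(h_0\bsj)\,\exp\!\Big({-}2\pi\ri\,\tfrac{\bk\cdot\bsj}{2m}\Big),
\]
which identifies the eigenvalue.

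\textbf{Step 2 (shift to $\oZmd$ and rewriting in terms of $\rho$, plus orthonormality).} Using the same $2m$-periodicity of the summand in each coordinate of $\bsj$, I would shift the index range from $\bbZ^d_{2m}=\{0,\ldots,2m-1\}^d$ to $\oZmd=\{-m,\ldots,m-1\}^d$ without changing the sum. On this range, $h_0\bsj\in[-\ell,\ell)^d$; by the tent definition of $\varphi$ together with its $2\ell$-periodic extension one has $\varphi(h_0 j_i)=h_0|j_i|$, so $\rhoext(h_0\bsj)=\rho(h_0|j_1|,\ldots,h_0|j_d|)$. Invoking componentwise evenness of $\rho$---which is automatic in the isotropic setting implicitly in play---this reduces to $\rho(h_0\bsj)$, yielding the stated formula. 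Orthonormality of $\{\bsV_\bk:\bk\in\bbZ^d_{2m}\}$ is then the standard orthonormality of the multivariate DFT basis, a routine tensor product of one-dimensional Fourier orthogonality relations. Realness of $\Lambdaext_\bk$ follows because $\rhoext(h_0\bsj)$ is even in each coordinate of $\bsj$ by the tent construction, so pairing $\bsj$ with $-\bsj$ in the sum produces only cosine contributions.

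\textbf{Main obstacle.} There is no real obstacle: once $\Rext$ is recognised as nested block circulant the proof is routine DFT algebra. The only care needed is in the double use of $2m$-periodicity---to convert the convolution sum into a diagonal action in Step 1, and to translate $\bbZ^d_{2m}\to\oZmd$ in Step 2---together with the tent-function identification that rewrites $\rhoext$ on the balanced index set in terms of $\rho$, where one should be mindful of the boundary index $j_i=-m$ (at which $h_0 j_i=-\ell$ and $\varphi(-\ell)=\ell$; both sides agree by periodicity).
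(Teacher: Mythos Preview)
Your proposal is correct and follows essentially the same route as the paper: expand $(\Rext\bsV_\bk)_\bkappa$, perform the convolution-style index shift, factor out $(\bsV_\bk)_\bkappa$, then use the $2m$-periodicity of the summand to move the index range to $\oZmd$ and replace $\rhoext$ by $\rho$. The paper's proof is more terse (it simply cites the ``extension property'' \eqref{eq:extprop} at the final step), whereas you make explicit that identifying $\rhoext(h_0\bsj)=\rho(h_0\bsj)$ on $\oZmd$ uses the componentwise evenness of $\rho$; this is a point the paper glosses over here and only states formally as a hypothesis later in Theorem~\ref{positivityoftransf}.
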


\begin{proof}
By \eqref{eq:rhotildef} and the fact that  $\varphi$ is symmetric, we see
that $\rhoext$ is also symmetric, so the eigenvalues of $\Rext$ are real.
To obtain the required formula for the eigenvalues, we use
\eqref{eq:Rext_def} and the formula for the eigenvectors in
\eqref{eq:eivsandeifs} to write, for $\bk, \bkappa \in \bbZ_{2m}^d$.
\begin{align*}
\left(\Rext \bsV_\bk\right)_{\bkappa} \ & =  \ \sum_{\bk' \in \bbZ_{2m}^d}
\rhoext(h_0(\bkappa- \bk')) \left(\bsV_\bk\right)_{\bk'}
\ = \ \sum_{\bk'' \in \bkappa - \bbZ_{2m}^d }
\rhoext(h_0 \, \bk'') \left(\bsV_\bk\right)_{\bkappa - \bk''}  \\
\ & = \ \Bigg( \sum_{\bk'' \in \bkappa - \bbZ_{2m}^d }
\rhoext(h_0 \, \bk'')  \exp\left(- 2 \pi \ri \frac{\bk\cdot \bk''}{2m} \right) \Bigg)
\left(\bsV_\bk\right)_{\bkappa} .
\end{align*}
Then \eqref{eq:eivsandeifs} follows, on using the coordinatewise
$2m$-periodicity of the summand in the last equation, and also the
extension property \eqref{eq:extprop}.
\end{proof}

It follows from the simple form of the eigenfunctions that the matrix
$\Rext$ can be diagonalised by FFT. The following version of the spectral
decomposition theorem, taken from \cite{GrKuNuScSl:11}, has the advantage
that it allows the diagonalisation to be implemented using only real FFT.
\begin{theorem} \label{thm:decomp}
$\Rext$ has the spectral decomposition:
\begin{align*} 
\ \Rext \ =\  \Qext \Lambdaext \Qext ,
\end{align*}
where
$\Lambdaext$ is the diagonal matrix containing the eigenvalues of $\Rext$,
  and $\Qext = \Re (\cF) + \Im (\cF)$ is real symmetric,  with
$$
\cF_{\bsk, \bsk'} =  \frac{1}{\sqrt{s}} \exp \left({2 \pi} \ri \frac{\bsk'
\cdot \bsk}{{2 m}} \right)   $$ denoting the {$d$-dimensional}  Fourier
matrix. If the eigenvalues of $\Rext$ are all non-negative then the
required $B$ in \eqref{eq:Rfactor} can be obtained by selecting $M$
appropriate rows of
\begin{equation} \label{eq:defBext}
 \Bext: =  \Qext (\Lambdaext)^{1/2} .
\end{equation}
\end{theorem}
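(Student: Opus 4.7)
The plan is to upgrade the complex eigendecomposition supplied by Proposition \ref{prop:eigs} to the real symmetric form stated, and then deduce the factorisation by restricting rows. Set $A=\Re(\cF)$ and $B=\Im(\cF)$, so that $\cF=A+\ri B$. Since the phase $\bsk\cdot\bsk'/(2m)$ is symmetric in $\bsk,\bsk'$ the matrix $\cF$ is symmetric, hence $\cF^{H}=\overline{\cF}=A-\ri B$. By Proposition \ref{prop:eigs}, the columns of $\cF$ are the normalised eigenvectors of $\Rext$ with eigenvalues $\Lambdaext_{\bk}$, so
\begin{equation*}
 \Rext \;=\; \cF\,\Lambdaext\,\cF^{H}
 \;=\; (A\Lambdaext A + B\Lambdaext B) \;+\; \ri\,(B\Lambdaext A - A\Lambdaext B).
\end{equation*}

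The crux is to show that both cross products $A\Lambdaext B$ and $B\Lambdaext A$ vanish. Because $\rho$ is even, so is $\rhoext$ (the folding map $\varphi$ satisfies $\varphi(-x)=\varphi(x)$ modulo $2\ell$), and the eigenvalue formula in \eqref{eq:eivsandeifs} immediately yields $\Lambdaext_{-\bk}=\Lambdaext_{\bk}$ with indices taken mod $2m$. Writing out
\begin{equation*}
 (A\Lambdaext B)_{\bkappa,\bk'}
 \;=\;\frac{1}{s}\sum_{\bk} \cos\!\bigg(\frac{\pi\,\bk\cdot\bkappa}{m}\bigg)\,\Lambdaext_{\bk}\,\sin\!\bigg(\frac{\pi\,\bk\cdot\bk'}{m}\bigg),
\end{equation*}
the substitution $\bk \mapsto -\bk$ leaves the cosine factor and $\Lambdaext_{\bk}$ invariant while flipping the sign of the sine, so the sum equals its own negative and therefore vanishes; an identical argument disposes of $B\Lambdaext A$. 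Taking real parts in the displayed equation above then gives $\Rext = A\Lambdaext A + B\Lambdaext B$, and adding back the two (zero) cross terms yields
\begin{equation*}
 \Qext\,\Lambdaext\,\Qext \;=\; (A+B)\,\Lambdaext\,(A+B)
 \;=\; A\Lambdaext A + B\Lambdaext B \;=\; \Rext.
\end{equation*}
The symmetry of $\cF$ in its two indices makes each of $A$ and $B$, and hence $\Qext=A+B$, real symmetric.

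For the factorisation part, when all $\Lambdaext_{\bk}\ge 0$ the diagonal matrix $(\Lambdaext)^{1/2}$ is real, and $\Qext=\Qext^{\top}$ gives
\begin{equation*}
 \Bext(\Bext)^{\top} \;=\; \Qext(\Lambdaext)^{1/2}(\Lambdaext)^{1/2}\Qext^{\top} \;=\; \Qext\,\Lambdaext\,\Qext \;=\; \Rext.
\end{equation*}
Since $R$ is the principal submatrix of $\Rext$ indexed by $\bk\in\{0,\ldots,m_0\}^{d}\subset\bbZ_{2m}^{d}$ (compare \eqref{eq:defR} and \eqref{eq:Rext_def}), the $M\times s$ matrix $B$ obtained by retaining only these rows of $\Bext$ satisfies $BB^{\top}=R$, which is \eqref{eq:Rfactor}. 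The only delicate step is the cross-term cancellation in the middle paragraph; once the $\bk\leftrightarrow -\bk$ pairing of eigenvalues is exploited, the rest is routine manipulation of the spectral decomposition already established in Proposition \ref{prop:eigs}.
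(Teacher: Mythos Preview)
The paper does not actually prove this theorem; it is simply stated and attributed to \cite{GrKuNuScSl:11}. Your proof is correct and follows the natural route: starting from the complex spectral decomposition $\Rext=\cF\,\Lambdaext\,\cF^{H}$ supplied by Proposition~\ref{prop:eigs}, you split $\cF=A+\ri B$ and show that the cross terms $A\Lambdaext B$ and $B\Lambdaext A$ vanish via the parity $\Lambdaext_{-\bk}=\Lambdaext_{\bk}$ (equivalently, reality of the eigenvalues), after which $(A+B)\Lambdaext(A+B)$ collapses to $A\Lambdaext A+B\Lambdaext B=\Rext$. The restriction argument for \eqref{eq:Rfactor} is also fine. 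One cosmetic point: you overload the symbol $B$ for both $\Im(\cF)$ and the $M\times s$ factor of $R$; renaming the former (e.g.\ $S$) would avoid the clash with the statement's notation.
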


The use of FFT allows fast computation of the matrix-vector product
$\Bext\bsy$ for any vector $\bsy$, which then yields $B\bsy$ needed for
sampling the random field in \eqref{eq:Gauss_exp1}. Our algorithm for
obtaining a minimal positive definite $\Rext$ is given in
Algorithm~\ref{alg1}. Our algorithm for sampling an instance of the random
field is given in Algorithm~\ref{alg2}.
Note that the normalisation used within the FFT routine differs among
particular implementations. Here, we assume the Fourier transform to be
unitary.

We can replace Step~1 of Algorithm~\ref{alg2} with a QMC point from
$[0,1]^s$ and mapped to $\bbR^s$ elementwise by the inverse of the
cumulative normal distribution function. The relative size of the
eigenvalues in $\Lambdaext$ tells us the relative importance of the
corresponding variables in the extended system, which helps to determine
the ordering of the QMC variables.

\begin{algorithm}
\algcaption{\label{alg1}}
Input: $d$, $m_0$, and covariance function $\rho$.
\begin{enumerate}
\item Set $m = m_0$.
\item Calculate $\bsr$, the first column of $\Rext$ in
    \eqref{eq:Rext_def}.
\item Calculate $\bsv$, the vector of eigenvalues of $\Rext$, by
    $d$-dimensional FFT on $\bsr$.
\item If smallest eigenvalue $<0$ then increment $m$ and go to Step~2.
\end{enumerate}
Output: $m$, $\bsv$.
\end{algorithm}

\begin{algorithm}
\algcaption{\label{alg2}}
Input: $d$, $m_0$, mean field $\overline{Z}$, and
$m$ and $\bsv$ obtained by Algorithm~\ref{alg1}.
\begin{enumerate}
\item With $s = (2m)^d$, sample an $s$-dimensional normal random vector $\bsy$.
\item Update $\bsy$ by elementwise multiplication with $\sqrt{\bsv}$.
\item Set $\bsw$ to be the $d$-dimensional FFT of $\bsy$.
\item Update $\bsw$ by adding its real and imaginary parts.
\item Obtain $\bsz$ by extracting the appropriate $M=(m_0+1)^d$
    entries of $\bsw$.
\item Update $\bsz$ by adding $\overline{Z}$.
\end{enumerate}
Output: $\bsz$ (or $\exp(\bsz)$ in the case of lognormal field).
\end{algorithm}

In the following subsection we shall show (under mild conditions) that
Algorithm~\ref{alg1} will always terminate. Moreover we shall give (for
the case of the \Mat covariance function) a detailed analysis of how
$\len$ depends on various parameters of the field. Then in
\S\ref{sec:decay}, we give an analysis of the decay rates of the
eigenvalues of $\Rext$ compared with that of the the eigenvalues of the
original Toeplitz matrix $R$ and the KL eigenvalues of the underlying
continuous field $Z$.

\subsection{Positive definiteness}
\label{subsec:posdef}

We first note that by definition \eqref{eq:covar} and \eqref{eq:defrho1}
it follows that for all $N\ge 1$, all point sets $\bst_1,
\ldots,\bst_N\in\mathbb{R}^d$, and all $\bsgamma\in\mathbb{R}^N$
\begin{equation}\label{eq:quadfm}
  \sum_{i=1}^N\sum_{i'=1}^N \gamma_i\,\gamma_{i'}\,\rho(\bst_i-\bst_{i'}) \ =\  \bbE[ (W - \overline{W})^2] \ \geq\  0,
\end{equation}
where $$W(\omega) = \sum_{i=1}^N \gamma_i Z(\bst_i, \omega)$$ and
$\overline{W} $ denotes its mean. This observation, together with
\eqref{eq:symm}, means that $\rho$ is a {\em symmetric} {\em positive
semidefinite} function (see, e.g., \cite[Chapter~6]{wendland05}). If in
addition, \eqref{eq:quadfm} is always positive for nonzero $\bsgamma$ then
$\rho$ is called  {\em positive definite}. In our later applications,
$\rho$ will always be positive definite.

We use the following definition of the Fourier transform of an
absolutely integrable function~$\rho$:
\begin{equation}\label{eq:FT}
\wrho(\bsxi) \ = \
\int_{\bR^d} \rho(\bsx) \,\exp(-2\pi\ri\,\bsxi\cdot\bsx)  \, \rd \bsx \ , \qquad
\bsxi \in \bR^d\ .
\end{equation}
If, in addition, $\rho$ is continuous and $\wrho$ is absolutely integrable
then the Fourier integral theorem gives
\begin{equation*}
\rho(\bsx) \ = \
\int_{\bR^d} \wrho(\bsxi) \,\exp(2\pi\ri\,\bsxi\cdot\bsx) \,\rd \bsxi \ , \qquad
\bsx \in \bR^d\ .
\end{equation*}
In this case, $\wrho$ is also continuous, and Bochner's theorem (e.g.,
\cite[Theorem 6.3]{LPS14}) states that $\rho$ is a positive definite
function if and only if $\wrho$ is positive.

The following theorem is the main result of this subsection.

\begin{theorem}\label{positivityoftransf}
Suppose that $\rho \in L^1(\mathbb{R}^d)$ is a real-valued, symmetric
positive definite function with the additional reflectional symmetry
\begin{equation*} 
  \rho(\bsx) = \rho(\pm x_1,\ldots,\pm x_d) = \rho(|x_1|,\ldots,|x_d|)
  \qquad\forall \bsx\in\bbR^d,
\end{equation*}
and suppose $\wrho \in L^1(\mathbb{R}^d)$. Suppose also that for the given
value of $h_0=1/m_0$ we have
\begin{equation}\label{eq:conditiononrho}
 \sum_{\bk \in \bbZ^d} \vert \rho(h_0 \bk )\vert \ <\ \infty.
\end{equation}
Then Algorithm~\ref{alg1} will always terminate with a finite value of
$m$, and the resulting matrix $R^{\rm{ext}}$ will be positive definite.
\end{theorem}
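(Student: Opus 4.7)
The plan is to apply the explicit eigenvalue formula from Proposition~\ref{prop:eigs},
\[
\Lambdaext_\bk \,=\, \sum_{\bk'\in\overline{\bbZ}_m^d} \rho(h_0 \bk')\,\exp\!\Big(\!-2\pi\ri\,\tfrac{\bk\cdot\bk'}{2m}\Big),
\]
and to show that every $\Lambdaext_\bk$ becomes strictly positive once $m$ is large enough; since $\Rext$ is real symmetric, this is equivalent to positive definiteness. The central idea is to compare this finite lattice sum with the \emph{full} lattice sum
\[
T_1(\bk,m) \,:=\, \sum_{\bk'\in\bbZ^d} \rho(h_0\bk')\,\exp\!\Big(\!-2\pi\ri\,\tfrac{\bk\cdot\bk'}{2m}\Big),
\]
which converges absolutely by hypothesis~\eqref{eq:conditiononrho}. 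Writing $\Lambdaext_\bk = T_1(\bk,m) - T_2(\bk,m)$, the discrepancy is bounded uniformly in $\bk$ by the tail
\[
|T_2(\bk,m)| \,\leq\, \sum_{\bk'\in\bbZ^d\setminus\overline{\bbZ}_m^d} |\rho(h_0\bk')| \,\longrightarrow\, 0 \quad\text{as } m\to\infty.
\]

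The real work is to bound $T_1(\bk,m)$ below by a positive constant independent of $\bk$ and $m$. For this I would invoke the Poisson summation formula applied to $\bsx\mapsto\rho(h_0\bsx)\exp(-2\pi\ri\,\bsx\cdot\bk/(2m))$; a direct change of variables in the Fourier transform yields
\[
T_1(\bk,m) \,=\, \frac{1}{h_0^d}\sum_{\bj\in\bbZ^d} \wrho\!\left(\frac{\bk}{2m\,h_0} + \frac{\bj}{h_0}\right).
\]
By Bochner's theorem $\wrho$ is strictly positive on $\bbR^d$ (since $\rho$ is continuous, integrable and positive definite), and it is continuous because $\rho\in L^1(\bbR^d)$. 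Since $\bk/(2m)\in[0,1)^d$, there is a choice $\bj^*\in\{-1,0\}^d$ making the shifted argument $\bk/(2mh_0)+\bj^*/h_0$ lie in the closed cube $[-1/(2h_0),\,1/(2h_0)]^d$. Retaining only this one nonnegative term,
\[
T_1(\bk,m) \,\geq\, \frac{1}{h_0^d}\min_{\bseta\in[-1/(2h_0),\,1/(2h_0)]^d}\wrho(\bseta) \,=:\, c_0 \,>\,0,
\]
uniformly in $\bk$ and $m$; positivity of the minimum follows from continuity and strict positivity of $\wrho$ on that compact cube.

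Combining gives $\Lambdaext_\bk \geq c_0 - |T_2(\bk,m)| > 0$ for every $\bk$ once $m$ is large enough to drive the uniform tail below $c_0$. Hence Algorithm~\ref{alg1} must terminate at some finite $m$, producing a positive definite $\Rext$. The principal obstacle in this argument is securing the \emph{uniform} lower bound on $T_1(\bk,m)$: Bochner alone provides only pointwise positivity, and one has to combine the Poisson representation with continuity of $\wrho$ on a compact fundamental domain in order to extract a constant independent of $\bk$ and $m$. A secondary technicality is verifying the hypotheses that justify pointwise Poisson summation in this setting, which follow from $\rho,\wrho\in L^1(\bbR^d)$ together with~\eqref{eq:conditiononrho}.
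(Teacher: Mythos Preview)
Your argument is correct and follows essentially the same route as the paper: the paper splits $\Lambdaext_\bk$ into the full lattice sum minus a tail (Lemma~\ref{lem:tech}), applies the Poisson summation formula (packaged as the ``sampling theorem'' in the Appendix, Theorem~\ref{thm:sampling}) to rewrite the full sum as $h_0^{-d}\sum_{\br}\wrho((\bszeta+\br)/h_0)$, and then uses Bochner together with continuity of $\wrho$ on the compact cube $[-1/(2h_0),1/(2h_0)]^d$ to extract a strictly positive lower bound independent of $\bk$ and $m$. The only cosmetic difference is that the paper records the lower bound with the full Poisson sum retained before dropping to a single term, whereas you go directly to the single-term bound; both versions suffice for the theorem.
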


We prove Theorem~\ref{positivityoftransf} via a technical estimate --
Lemma~\ref{lem:tech} -- which provides an explicit lower bound for the
eigenvalues of $\Rext$. This estimate proves the theorem, and moreover,
allows us to obtain explicit lower bounds for the eigenvalues  in the
lemmas which follow.

\begin{lemma} \label{lem:tech}
Under the assumptions of Theorem~\ref{positivityoftransf}, the eigenvalues
$\Lambdaext_\bk$ of $\Rext$ all satisfy the estimate
\begin{equation} \label{eq:eigest}
 \Lambdaext_\bk \ \geq\ \frac{1}{h_0^d}
  \min_{\bszeta\in[-\frac{1}{2},\frac{1}{2}]^d}
  \sum_{\br\in\bbZ^d}\wrho\Big(\frac{\bszeta+\br}{h_0}\Big)
   - \sum_{\bk' \in \bbZ^d\setminus\overline{\bbZ}_{m}^d}
   | \rho(h_0 \bk')|\ .
\end{equation}
\end{lemma}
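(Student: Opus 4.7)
The plan is to start from the eigenvalue formula in Proposition~\ref{prop:eigs}, extend the sum over $\overline{\bbZ}_m^d$ to all of $\bbZ^d$ at the cost of an explicitly bounded tail, and then apply the Poisson summation formula to rewrite the resulting infinite sum in terms of $\wrho$. Minimising over the shift produced by $\bk/(2m)$ will then give exactly the claimed lower bound.

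First I would rewrite
\begin{equation*}
 \Lambdaext_\bk
 \,=\, \sum_{\bk'\in\bbZ^d}\rho(h_0\bk')\,\exp\!\Big(-2\pi\ri\,\tfrac{\bk\cdot\bk'}{2m}\Big)
 \,-\, \sum_{\bk'\in\bbZ^d\setminus\overline{\bbZ}_m^d}\rho(h_0\bk')\,\exp\!\Big(-2\pi\ri\,\tfrac{\bk\cdot\bk'}{2m}\Big),
\end{equation*}
which is legitimate because the hypothesis \eqref{eq:conditiononrho} makes the infinite series absolutely convergent. The second (tail) sum is bounded in modulus by $\sum_{\bk'\in\bbZ^d\setminus\overline{\bbZ}_m^d}|\rho(h_0\bk')|$, which is already the negative contribution appearing on the right-hand side of \eqref{eq:eigest}.

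Next, I would apply the Poisson summation formula to the infinite sum. Setting $\bszeta:=\bk/(2m)$ and $g(\bsx):=\rho(h_0\bsx)$, so that $\hat g(\bsxi)=h_0^{-d}\,\wrho(\bsxi/h_0)$, Poisson summation (which is applicable because $\rho,\wrho\in L^1(\bbR^d)$) gives
\begin{equation*}
 \sum_{\bk'\in\bbZ^d}\rho(h_0\bk')\,\exp(-2\pi\ri\,\bszeta\cdot\bk')
 \,=\, \frac{1}{h_0^d}\sum_{\br\in\bbZ^d}\wrho\!\Big(\frac{\bszeta+\br}{h_0}\Big).
\end{equation*}
Because $\bk\in\bbZ_{2m}^d$ gives $\bszeta\in[0,1)^d$, a shift by an integer vector (which is absorbed into the summation index $\br$) moves $\bszeta$ into $[-\tfrac12,\tfrac12]^d$ without changing the left-hand side; by Bochner's theorem $\wrho>0$, so the right-hand side is a positive real number. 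Moreover, the reflectional symmetry of $\rho$ guarantees that the original exponential sum is real, so no imaginary parts need to be discarded.

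Combining the two steps, I bound the Poisson-rewritten term below by its minimum over $\bszeta\in[-\tfrac12,\tfrac12]^d$ and subtract the modulus of the tail, which yields \eqref{eq:eigest}. The main technical point to watch is the justification of Poisson summation (needing $\rho,\wrho\in L^1$ plus continuity, all of which are in the hypotheses of Theorem~\ref{positivityoftransf}); once that is in place, the rest is bookkeeping of the index shift between $\bk/(2m)$ and $\bszeta\in[-\tfrac12,\tfrac12]^d$.
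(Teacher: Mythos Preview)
Your proposal is correct and follows essentially the same route as the paper: split the finite sum from Proposition~\ref{prop:eigs} into an infinite sum over $\bbZ^d$ minus a tail, bound the tail in modulus, and apply Poisson summation to the infinite sum before minimising over the shift. The only cosmetic difference is that the paper packages the Poisson summation step as a separately stated ``sampling theorem'' (Theorem~\ref{thm:sampling} in the Appendix), whose proof is precisely the careful justification of Poisson summation under the hypotheses $\rho,\wrho\in L^1$ together with \eqref{eq:conditiononrho} that you flag in your final paragraph; invoking that result with $h=h_0$ and $\bsxi=\bk/(2mh_0)$ is exactly your argument.
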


\begin{proof}
By Proposition ~\ref{prop:eigs}, we have
\begin{align}
 {\Lambda_{\bk}^{\rm{ext}}} &\,=\,
 \sum_{\bk' \in \bbZ^d}
 \rho\big(h_0 \bk' \big)
 \exp\left( - {2\pi\ri} \frac{\bk' \cdot \bk }{2m} \right)
 - \sum_{\bk' \in \bbZ^d\setminus\overline{\bbZ}_{m}^d}
 \rho\big(h_0 \bk' \big)
 \exp\left(  - {2 \pi\ri} \frac{\bk' \cdot \bk}{2m}  \right)
 . \label{eq:condition}
\end{align}
We obtain the lower bound on the first sum on the right-hand side of
\eqref{eq:condition} using \eqref{eq:min-bound} of
Theorem~\ref{thm:sampling} in the Appendix with $h= h_0$ and
$\boldsymbol{\xi} =  \bk'/(2m)$, and an upper bound on the second sum in
the obvious way.
\end{proof}

\begin{proofof}{{Theorem~\ref{positivityoftransf}}}
Because of the assumed positive definiteness of $\rho$, it follows from
\linebreak Bochner's theorem that the Fourier transform $\wrho$ is
positive, so the strict positivity of the first term on the right-hand
side of \eqref{eq:eigest} follows from Theorem \ref{thm:sampling}. The
lower bound is also independent of $m$. For fixed $h_0$,
\eqref{eq:conditiononrho} ensures that the tail sum in the second term on
the right-hand side of \eqref{eq:eigest} converges to zero as
$m\rightarrow \infty$. Hence the result follows.
\end{proofof}

\subsection{Isotropic covariance}
\label{subsec:isot}

More detailed lower bounds on $\Lambdaext_{\bk}$  can be obtained under
the additional assumption that
 the random field $Z$ in \eqref{eq:covar} is {\em
isotropic}, i.e.,
\begin{equation} \label{eq:kwalpha}
  \rho(\bsx) \,=\, \kappa(\Vert \bsx \Vert_2/\lambda)\ ,
\end{equation}
where the parameter $\lambda$ is a correlation length which will play a
key role in Example \ref{ex:Mat} and Theorem \ref{cor:matern-growth}
below. In this case the Fourier transform \eqref{eq:FT} is given by
\begin{align} \label{eq:scaledFT}
 \widehat{\rho}(\bsxi)  \,=\,
\lambda^d\, \widehat{\kappa}_d({\lambda \Vert \bsxi\Vert_2 })\ ,
\end{align}
where
\begin{align}\label{eq:Hankel}
  \widehat{\kappa}_d(r)
  \,:=\,
  \frac{2 \pi}{r^{(d-2)/2}} \int_0^\infty \kappa(t) \, t^{d/2} \, J_{(d-2)/2}(2\pi \, r t) \, \rd{t}
  \qquad
  \text{with } r  \ge 0
  \ ,
\end{align}
and $J_{\alpha}$ denotes the Bessel function of order $\alpha$. The
right-hand side of \eqref{eq:Hankel} is the \emph{Hankel transform} of
$\kappa$ (see, e.g., \cite[Theorem 1.107]{LPS14}). The following lemma
then estimates each of the terms on the right-hand side of
\eqref{eq:eigest} to obtain an explicit lower bound on the eigenvalues of
$\Rext$.

\begin{lemma} \label{lem:bits}
Suppose $\rho$ and  $\wrho$ are given as in
\eqref{eq:kwalpha}--\eqref{eq:Hankel}.
\begin{enumerate}
\item [\textnormal{(i)}] %
If $\,\vert \kappa \vert$ is a decreasing function on $\bR_+$ and
$\,r^{d-1} \,\kappa(r) \in L_1(\bR_+)$, then  (with $\len = m h_0$),
\[ \sum_{\bk \in {\bbZ^d\setminus\overline{\bbZ}_{_m}^d}} \vert
  \rho(h_0 \bk) \vert \,\le\, \frac{(3^d-1)2^{d-1}}{(h_0/\lambda)^d}
  \int_{(\len- h_0)/\lambda}^\infty r^{d-1} \, |\kappa(r)| \,\rd r\, .
\]
\item [\textnormal{(ii)}] %
If $\widehat{\kappa}_d$ is positive and decreasing on $\bR_+$ and
$r^{d-1}\,\widehat{\kappa}_d(r) \in L_1(\bR_+)$,
then
\[
  \frac{1}{h_0^d} \min_{\bszeta\in [-\frac{1}{2},\frac{1}{2}]^d}
  \sum_{\bsr \in \bbZ^d} \wrho\Big(\frac{\bszeta + \br}{h_0}\Big)
  \,\ge\, \frac{2^d}{d^{d/2-1} \, 3^{d-1}} \int_{3 d^{1/2} \lambda/
  (2 h_0)}^\infty r^{d-1}\, \widehat{\kappa}_d(r) \,\rd r  \, .
\]
\end{enumerate}
\end{lemma}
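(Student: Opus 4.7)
The plan is to establish both inequalities by reducing each $d$-dimensional lattice sum to a one-dimensional radial integral, using the respective monotonicity hypotheses on $|\kappa|$ and $\widehat{\kappa}_d$ together with a cube-tiling comparison. In both parts the pattern is the same: cover the lattice by unit cubes on which the target value can be bounded via monotonicity by the function, pass to an integral, rescale so that the lattice spacing $h_0/\lambda$ (or $\lambda/h_0$) appears as a prefactor, then move to polar coordinates to harvest the factor $r^{d-1}$.

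For part (i), I would first embed $\bbZ^d \setminus \overline{\bbZ}_m^d$ into $\{\bk : \|\bk\|_\infty \geq m\}$ and partition by the ``signature'' $\tau\in\{-,0,+\}^d\setminus\{\mathbf 0\}$ that records for each coordinate whether $k_i$ lies above, inside, or below the window $\{-m,\ldots,m-1\}$; this classification gives exactly the combinatorial factor $3^d - 1$. Reflection symmetry of $\kappa$ folds the two outside half-axes in each ``outside'' direction onto one, contributing a power of two per outside coordinate and, after the bookkeeping, the prefactor $2^{d-1}$. Because $|\kappa|$ is decreasing and radial, each $|\kappa(h_0\|\bk\|_2/\lambda)|$ is bounded above by the mean of $|\kappa(h_0\|\bx\|_2/\lambda)|$ over the unit cube $\bk - [0,1]^d$, which lies one step closer to the origin and therefore satisfies $\|\bx\|_2\le\|\bk\|_2$ throughout. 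Summing over $\bk$ and rescaling $\by = h_0\bx/\lambda$ produces a $d$-dimensional integral over $\{\by : \|\by\|_\infty \geq (\ell-h_0)/\lambda\}$ divided by $(h_0/\lambda)^d$; the inclusion $\{\|\by\|_\infty\geq R\}\subseteq\{\|\by\|_2\geq R\}$ followed by polar coordinates converts this to the claimed one-dimensional integral $\int_{(\ell-h_0)/\lambda}^\infty r^{d-1}|\kappa(r)|\,\rd r$.

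For part (ii), I would first use the isotropy identity $\widehat{\rho}(\bsxi)=\lambda^d\widehat{\kappa}_d(\lambda\|\bsxi\|_2)$ to rewrite the left-hand side as $(\lambda/h_0)^d\sum_{\br\in\bbZ^d}\widehat{\kappa}_d(\lambda\|\bszeta+\br\|_2/h_0)$. Since $\widehat{\kappa}_d$ is positive, every summand is nonnegative, and the tiling $\bbR^d=\bigcup_{\br\in\bbZ^d}(\bszeta+\br+[-\tfrac12,\tfrac12]^d)$ together with the bound $\|\bszeta+\br\|_2 \leq \|\bx\|_2+\sqrt{d}/2$ on the $\br$-th tile and the monotonicity of $\widehat{\kappa}_d$ gives
\[
  \sum_{\br\in\bbZ^d}\widehat{\kappa}_d\!\Big(\tfrac{\lambda\|\bszeta+\br\|_2}{h_0}\Big)
  \;\geq\;
  \int_{\bbR^d}\widehat{\kappa}_d\!\Big(\tfrac{\lambda(\|\bx\|_2+\sqrt{d}/2)}{h_0}\Big)\,\rd\bx,
\]
which is already independent of $\bszeta$ and so handles the minimum for free. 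Passing to polar coordinates and substituting $u=\lambda(r+\sqrt{d}/2)/h_0$ yields an integrand of the form $(u-\sqrt{d}\lambda/(2h_0))^{d-1}\widehat{\kappa}_d(u)$. The decisive (crude but indispensable) inequality $(u-c)^{d-1}\geq (2u/3)^{d-1}$ valid for $u\geq 3c$, applied with $c=\sqrt{d}\lambda/(2h_0)$ and restricting the integration to $u\geq 3\sqrt{d}\lambda/(2h_0)$, then supplies the stated lower bound.

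The hard part will be the pure bookkeeping: chasing the explicit combinatorial constant $(3^d-1)2^{d-1}$ through the signature decomposition in (i) without double-counting boundary indices, and reconciling the spherical-to-Cartesian Jacobian with the $(2u/3)^{d-1}$ estimate in (ii) so as to recover the precise prefactor $2^d/(d^{d/2-1}3^{d-1})$ rather than a slightly different radially-symmetric constant.
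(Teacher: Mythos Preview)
Your plan for part~(ii) is sound and is actually a little cleaner than the paper's argument. The paper does not tile $\bbR^d$ by unit cubes; instead it decomposes the lattice into $\ell^\infty$-shells $\{\br:\|\br\|_\infty=j\}$, uses the cardinality lower bound $\#\{\br:\|\br\|_\infty=j\}\ge d\,2^d j^{d-1}$ together with $\|\bszeta+\br\|_2\le d^{1/2}(1/2+j)$, and then compares the resulting one-dimensional sum to an integral. Your cube-tiling comparison bypasses the shell count and, after polar coordinates and the same $(u-c)^{d-1}\ge(2u/3)^{d-1}$ step, delivers the prefactor $\omega_{d-1}(2/3)^{d-1}$ with $\omega_{d-1}=2\pi^{d/2}/\Gamma(d/2)$. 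This is not the constant in the lemma, but since $\omega_{d-1}\ge 2/d^{d/2-1}$ for every $d\ge 1$ your bound is at least as strong, so the stated inequality follows.

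Part~(i), however, has a genuine gap. The claim that the cube $\bk-[0,1]^d$ ``lies one step closer to the origin and therefore satisfies $\|\bsx\|_2\le\|\bk\|_2$ throughout'' is false whenever some coordinate of $\bk$ vanishes: for $d\ge 2$ take $\bk=(m,0,\ldots,0)\in\bbZ^d\setminus\overline{\bbZ}_m^d$ and $\bsx=(m,-1,0,\ldots,0)\in\bk-[0,1]^d$, for which $\|\bsx\|_2=\sqrt{m^2+1}>m=\|\bk\|_2$. Reflecting to the first orthant does not help, since a coordinate equal to $0$ cannot be shifted closer to the origin by a unit translation. With only monotonicity of $|\kappa|$ (no convexity), you cannot recover the mean-value inequality once the cube contains points of larger norm. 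Note also that if the cube argument did work, the polar-coordinate step would produce the factor $\omega_{d-1}$, not $(3^d-1)2^{d-1}$; the signature/reflection bookkeeping you describe plays no role in the integral bound you end up with, so the two halves of your plan for~(i) are not really connected.

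The paper's fix is to give up on comparing nearby $d$-dimensional points and instead collapse to one dimension immediately via $\|\bk\|_2\ge\|\bk\|_\infty$. Grouping by $j=\|\bk\|_\infty$ and using the shell bound $\#\{\bk:\|\bk\|_\infty=j\}\le(3^d-1)\,j^{d-1}$ reduces everything to $\sum_{j\ge m} j^{d-1}|\kappa(h_0 j/\lambda)|$, and a one-dimensional sum-to-integral comparison using $\lceil r\rceil\le 2r$ supplies the remaining factor $2^{d-1}$. That is where the constants $(3^d-1)$ and $2^{d-1}$ actually originate.
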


\begin{proof}
We begin by deriving  upper and lower bounds for some elementary
sequences.

Suppose $g$ is any positive and decreasing function satisfying $r^{d-1}
g(r)\in L_1(\bbR_+)$. {For} any integer $m\ge 1$ we have
\begin{align}\label{eq:sum:ub}
  \sum_{j=m+1}^\infty j^{d-1} \, g(j)
  \le
  \sum_{j=m+1}^\infty \int_{j-1}^j \ceil{r}^{d-1} \, g(r) \, \rd{r}
  \le
  \int_m^\infty (2r)^{d-1} \, g(r) \, \rd{r}
  ,
\end{align}
where the first inequality uses $\ceil{r} = j$ for $r \in (j-1,j]$ and the
second inequality follows from $\ceil{r} \le 2r$ for $r\ge1$. Similarly,
we can obtain a lower bound for $m \geq 1$,
\begin{align}\label{eq:sum:lb}
  \sum_{j=m+1}^\infty j^{d-1} \, g(j)
  \ge
  \sum_{j=m+1}^\infty \int_j^{j+1} \floor{r}^{d-1} \, g(r) \, \rd{r}
  \ge
  \int_{m+1}^\infty (r/2)^{d-1} \, g(r) \, \rd{r}
  .
\end{align}

Also let us consider the set $S_d(j) := \{ \bk \in \bbZ^d: \|\bk \|_\infty
= j \}$ for any integer $j\ge 1$. Then, with $\# \, S_d(j)$ denoting the
cardinality of this set, we have the bounds
\begin{align}\label{eq:Sdj:lb:ub}
  d \, 2^d \, j^{d-1}
  \ \le
\  \# \, S_d(j)
  \ = \ (2j+1)^d-(2j-1)^d
  \ = \ 2 \sum_{\substack{i=1 \\ i \text{ odd}}}^d \binom{d}{i} (2j)^{d-i}
  \ \le \
  (3^d-1) j^{d-1},
\end{align}
where the lower bound is the $i=1$ term in the binomial expansion and the
upper bound comes from the  estimate $j^{d-i} \le j^{d-1}$ for $i\ge 1$.
The bounds are  exact for $d = 1$ and $d=2$, with $\# \, S_1(j) = 2$ and
$\# \, S_2(j) = 8j$, while for $d=3$ we have  $ \# S_3(j) = 2 + 24j^2$,
and the lower and upper bounds given by \eqref{eq:Sdj:lb:ub}  are $24 j^2$
and $ 26 j^2$ respectively.

Now, to prove (i), using \eqref{eq:Sdj:lb:ub} and \eqref{eq:sum:ub} and
$\|\bk\|_2 \ge \|\bk\|_\infty$, we can now write
\begin{align*}
 \sum_{\bk \in {\bbZ^d\setminus \overline{\bbZ}_{m}^d}} & \vert \kappa(h_0
\|\bk\|_2/\lambda) \vert
 \,\leq \, \sum_{j = m}^\infty  \sum_{\Vert \bk \Vert_\infty = j  } \vert
 \kappa( h_0  \|\bk\|_2/\lambda) \vert
 \,\le\, (3^d-1) \sum_{j=m}^\infty \, j^{d-1} \,
 \vert \kappa(h_0 j/\lambda )\vert    \\
 &\,\le\, (3^d-1)2^{d-1} \int_{m-1}^{\infty} r^{d-1} \, \vert \kappa(h_0 r /\lambda) \vert \,\rd r
 \,=\, \frac{(3^d-1)2^{d-1}}{(h_0/\lambda)^d} \int_{(\len-h_0)/\lambda }^{\infty} r^{d-1} \,\vert \kappa(r)\vert  \,\rd r,
\end{align*}
with $\len=mh_0$, thus completing the proof of (i).

To prove (ii), note first that, for $\bszeta \in [-\frac{1}{2},\frac{1}{2}]^d$ and any
$\br \in \bbZ^d$ with $\Vert \br \Vert_\infty = j$, we have $\|\bszeta +
\br\|_2 \le d^{1/2} \|\bszeta + \br\|_\infty \le d^{1/2} (1/2 + j)$. Using
\eqref{eq:scaledFT}, \eqref{eq:Sdj:lb:ub} and \eqref{eq:sum:lb} and dropping the $\br=\bszero$
term, we can write
\begin{multline*}
  \frac{1}{h_0^d}\sum_{\bsr\in\bbZ^d}
  \wrho\Big(\frac{\bszeta+\bsr}{h_0}\Big)
  \ =\
  \bigg(\frac{\lambda}{h_0}\bigg)^d\sum_{\br \in \bbZ^d}
    \widehat{\kappa}_d\Big(\frac{\lambda\|\bszeta + \br\|_2}{h_0}\Big)
  \ \ge \
  \bigg(\frac{\lambda}{h_0}\bigg)^d\sum_{j=1}^\infty \sum_{\|\br\|_\infty=j}
    \widehat{\kappa}_d\Big(\frac{\lambda\|\bszeta + \br\|_2}{h_0}\Big)
  \\
\   \ge\
  \frac{d\,2^d}{(h_0/\lambda)^d} \sum_{j=1}^\infty j^{d-1}
    \widehat{\kappa}_d\Big(\lambda\frac{d^{1/2} (1/2+j)}{h_0}\Big)
  \ \ge
\   \frac{2d}{(h_0/\lambda)^d} \int_1^\infty r^{d-1}
    \widehat{\kappa}_d\Big(\frac{\lambda d^{1/2} (1/2+r)}{h_0}\Big) \, \rd r
  \\
\   = \
  \frac{2}{d^{d/2-1}} \int_{{3 d^{1/2} \lambda/(2 h_0)}}^\infty
  \left(r - \frac{\lambda d^{1/2}}{2h_0}\right)^{d-1}
    \widehat{\kappa}_d(r) \, \rd r
 \  \ge
\  \frac{2}{d^{d/2-1}} \int_{{3 d^{1/2} \lambda/(2 h_0)}}^\infty \left(\frac{2r}{3}\right)^{d-1}
    \widehat{\kappa}_d(r) \, \rd r ,
\end{multline*}
where the last inequality follows from $r - c \ge 2r/3 \Leftrightarrow r
\ge 3c$, with $c= d^{1/2}\lambda / (2h_0)$.
\end{proof}

\begin{corollary}
\label{cor:PD_isot} Under the assumptions of Lemma~\ref{lem:bits}, $\Rext$
is positive definite if
\begin{equation}
  \int_{3 \lambda d^{1/2} / (2h_0)}^\infty r^{d-1}\, \widehat{\kappa}_d(r) \,\rd r
  \, > \,
  \frac{(3^d-1)\,3^{d-1}\, d^{d/2-1}}{2 (h_0/\lambda)^d } \int_{(\len-h_0)/\lambda}^\infty r^{d-1} \, |\kappa(r)| \,\rd r
  .
 \label{eq:corest}
\end{equation}
\end{corollary}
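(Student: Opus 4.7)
The plan is that this corollary is essentially mechanical: it is a direct combination of the universal lower bound on the eigenvalues from Lemma~\ref{lem:tech} with the two more refined isotropic estimates from Lemma~\ref{lem:bits}. Recall that Lemma~\ref{lem:tech} gives, for every $\bk \in \bbZ_{2m}^d$,
\[
 \Lambdaext_\bk \ \geq\ \frac{1}{h_0^d} \min_{\bszeta\in[-\frac{1}{2},\frac{1}{2}]^d} \sum_{\br\in\bbZ^d}\wrho\Big(\frac{\bszeta+\br}{h_0}\Big) \ -\ \sum_{\bk' \in \bbZ^d\setminus\overline{\bbZ}_m^d} |\rho(h_0 \bk')| ,
\]
and the two terms on the right-hand side are precisely those estimated from above and from below by parts (ii) and (i) of Lemma~\ref{lem:bits}, respectively.

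The first step of the proof is to substitute the lower bound from Lemma~\ref{lem:bits}(ii) into the first term and the upper bound from Lemma~\ref{lem:bits}(i) into the second term, yielding the uniform estimate
\[
 \Lambdaext_\bk \ \geq\ \frac{2^d}{d^{d/2-1}\,3^{d-1}} \int_{3d^{1/2}\lambda/(2h_0)}^\infty r^{d-1}\,\widehat{\kappa}_d(r)\,\rd r \ -\ \frac{(3^d-1)\,2^{d-1}}{(h_0/\lambda)^d} \int_{(\len-h_0)/\lambda}^\infty r^{d-1}\, |\kappa(r)|\,\rd r ,
\]
a bound that does not depend on $\bk$. For $\Rext$ to be positive definite it suffices that this lower bound be strictly positive for every index. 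Rearranging (multiplying through by $d^{d/2-1}\,3^{d-1}/2^d$ and observing that $(3^d-1)\,2^{d-1}\cdot d^{d/2-1}\,3^{d-1}/2^d = (3^d-1)\,3^{d-1}\,d^{d/2-1}/2$) reproduces precisely the inequality \eqref{eq:corest}.

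There is essentially no obstacle here: the corollary is a corollary in the strictest sense, and the only thing to track is the bookkeeping of constants. One may also note that the strict inequality in \eqref{eq:corest} is what guarantees strict positivity (not merely non-negativity) of every $\Lambdaext_\bk$, so that $\Rext$ is positive definite rather than just positive semidefinite; all hypotheses needed for both parts of Lemma~\ref{lem:bits} to apply simultaneously are subsumed in the statement ``under the assumptions of Lemma~\ref{lem:bits}''.
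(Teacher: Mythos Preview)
Your approach is correct and is exactly the paper's: combine the eigenvalue lower bound of Lemma~\ref{lem:tech} with parts (i) and (ii) of Lemma~\ref{lem:bits} and rearrange the constants. The one point you skip, and which is in fact the only substantive content of the paper's proof, is the verification that Lemma~\ref{lem:tech} actually applies: that lemma is stated under the hypotheses of Theorem~\ref{positivityoftransf} (namely $\rho,\wrho\in L^1(\bbR^d)$, reflectional symmetry, and the summability condition~\eqref{eq:conditiononrho}), and these are not literally ``the assumptions of Lemma~\ref{lem:bits}''. The paper checks them explicitly: $\rho\in L^1(\bbR^d)$ and $\wrho\in L^1(\bbR^d)$ follow from $r^{d-1}\kappa(r)\in L^1(\bbR_+)$ and $r^{d-1}\widehat{\kappa}_d(r)\in L^1(\bbR_+)$ via the radial representations~\eqref{eq:kwalpha}--\eqref{eq:scaledFT}; the finiteness of the sum in~\eqref{eq:conditiononrho} follows from the bound in Lemma~\ref{lem:bits}(i); and the reflectional symmetry is automatic for an isotropic covariance. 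Once that is noted, your computation is complete.
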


\begin{proof}
We make use of Lemma \ref{lem:tech}. The fact that $\rho\in L_1(\bbR^d)$
and $\wrho\in L_1(\bbR^d)$ follow immediately from \eqref{eq:kwalpha} and
\eqref{eq:Hankel} respectively, and from the assumptions
 $r^{d-1}\kappa(r)\in L_1(\bbR^+)$ and
$r^{d-1}\widehat{\kappa}_d(r)\in L_1(\bbR^+)$. It then follows from Part
(i) of Lemma~\ref{lem:bits} that the assumption \eqref{eq:conditiononrho}
of Theorem~\ref{positivityoftransf} is satisfied. Since the symmetry
assumption in the theorem is automatically satisfied by an isotropic
covariance, the result now follows immediately from Lemma~\ref{lem:tech}.
\end{proof}

To interpret Corollary~\ref{cor:PD_isot}, recall that $\lambda$ is the
correlation length, so $\lambda$ is bounded above (without loss of
generality let us assume $\lambda \le 1$), but $\lambda$ may approach $0$.
If $h_0$ is chosen so that $h_0/\lambda$ is a fixed constant, then
\eqref{eq:corest} and integrability of $r^{d-1} \vert \kappa(r)\vert$
ensures that positive definiteness is achieved for $\len$ sufficiently
large independently  of $\lambda$ and $h_0$. The condition ``$h_0/\lambda$
constant'' is a natural analogue to the requirement in oscillatory
problems that the meshwidth should be proportional to the wavelength.
However if $\len$ and $\lambda$ are  fixed and  $h_0 \rightarrow 0$
(constituting ``subwavelength mesh refinement'' needed to get higher
accuracy) we see that the sufficient condition that ensures positive
definiteness will eventually fail. One interesting question is how fast
$\len$ needs to grow as $h_0$ decreases in order to be sure of positive
definiteness. This can be answered fairly completely and satisfactorily in
the case of the \Mat family.

\begin{example} \label{ex:Mat}
The \Mat family of covariances are defined by
\begin{equation} \label{defmatern}
 \rho(\bsx) =
 \kappa(\Vert \bsx \Vert_2/\lambda) \ ,  \quad \text{where} \quad  \kappa(r)  = \
\sigma^2 \, \frac{2^{1-\nu}}{\Gamma(\nu)}
(\sqrt{2\nu}\,  r)^{\nu} K_\nu\left( \sqrt{2\nu}\, r \right).
\end{equation}
Here $\Gamma$ is the gamma function and $K_\nu$ is the modified Bessel
function of the second kind, $\sigma^2$ is the variance, $\lambda$ is the
correlation length and $\nu > 0$ is a smoothness
parameter. The practically interesting range is $\nu \ge
1/2$, with the cases $\nu = 1/2$ and $\nu = \infty$ corresponding to
the exponential and Gaussian covariances respectively, see, e.g.,
\cite{GKNSSS:15}. For this reason, we have restricted the
analysis to the case $\nu \geq 1/2$, see also Remark
\ref{rem:Matern_extend}. 

For this particular $\kappa$, the Hankel transform
$\widehat{\kappa}_d$ (see \eqref{eq:Hankel}) is explicitly known, and so
we have from \eqref{eq:scaledFT}--\eqref{eq:Hankel} that
\begin{align}
 \wrho(\bsxi) = \lambda^d \, \widehat{\kappa}_d(\lambda \Vert \bsxi \Vert_2) \ ,
 \quad \text{where} \quad
  \widehat{\kappa}_d(r)
   =
  \sigma^2 2^d \pi^{d/2} (2 \nu)^{\nu} \frac{\Gamma(\nu+ d/2)}{\Gamma(\nu)}
   \, \frac{1}{(2 \nu + (2 \pi r)^2)^{\nu+d/2}} \ .
 \label{eq:Hankel1}
\end{align}
(This can be obtained, for example, by some manipulation of the formula
\cite[p.264]{LPS14}.) Note that for the \Mat case both $\rho$ and $\wrho$
are positive, radial, decreasing functions.
\end{example}

The following theorem shows that when $\nu$ and $\lambda$ are fixed,
$\len$ needs to grow with order $\log h_0^{-1}$ in order to be sure of
positive definiteness as $h_0 \rightarrow 0$. On the other hand if
$\lambda$ and $h_0$ are fixed, $\ell$ needs to grow like $\nu^{1/2}\log
\nu $ as $\nu$ increases. For given $\ell$, $h_0$, the bound on $\ell$
gets smaller as $\lambda$ decreases. This provides conditions guaranteeing
the termination of Algorithm~\ref{alg1}.

\begin{notation}\label{not:tilde}
When discussing the \Mat case, we shall use the notation $A\lesssim B$
(equivalently $B \gtrsim A$)  to mean $A/B$ is bounded above independently
of $\ell, h_0, \lambda$ and $\nu$, and we write $A\sim B$ if $A \lesssim
B$ and $B \lesssim A$.
\end{notation}

\begin{theorem}\label{cor:matern-growth}
Consider the \Mat covariance family \eqref{defmatern}, with smoothness
parameter $\nu$ satisfying $1/2 \leq \nu < \infty$ and correlation length
$\lambda \leq 1$. Suppose $h_0/\lambda \leq e^{-1}$. Then there exist
positive constants  $C_1$ and $C_2\geq 2 \sqrt{2}$ which may depend on
dimension $d$ but are independent of the other parameters $\ell, h_0,
\lambda, \nu, \sigma^2$, such that $\Rext$ is positive definite if
\begin{align}
\ell/\lambda  \ \geq  \  C_1\  + \ C_2\,  \nu^{1/2} \, \log\left( \max\{ {\lambda}/{h_0}, \, \nu^{1/2}\} \right) \ .
\label{eq:alphah0small}
\end{align}
\end{theorem}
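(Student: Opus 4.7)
The plan is to deduce \eqref{eq:alphah0small} from the sufficient condition in Corollary~\ref{cor:PD_isot}, instantiated with the explicit Matérn $\kappa$ and $\widehat{\kappa}_d$ of Example~\ref{ex:Mat}. Both sides of \eqref{eq:corest} will be reduced to estimates that are explicit in $\nu$, $\lambda/h_0$, and $c:=(\ell-h_0)/\lambda$, and I will then take logarithms and solve for the smallest admissible $\ell$.

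For the left-hand side of \eqref{eq:corest}, I would substitute $u=2\pi r/\sqrt{2\nu}$ in $\int_{3\lambda d^{1/2}/(2h_0)}^\infty r^{d-1}\widehat{\kappa}_d(r)\,\rd r$, which collapses the rational factor $(2\nu+(2\pi r)^2)^{-\nu-d/2}$ to $(2\nu)^{-\nu-d/2}(1+u^2)^{-\nu-d/2}$ and absorbs the prefactor $(2\nu)^{\nu}$. Writing $b:=2\pi\cdot(3d^{1/2}\lambda/(2h_0))/\sqrt{2\nu}\sim (\lambda/h_0)/\sqrt{\nu}$, the resulting tail integral $\int_b^\infty u^{d-1}(1+u^2)^{-\nu-d/2}\,\rd u$ is bounded below by $\int_b^\infty u^{-2\nu-1}\,\rd u=1/(2\nu b^{2\nu})$ provided $b\gtrsim 1$, i.e.\ provided $\lambda/h_0\gtrsim\sqrt{\nu}$ (precisely the condition that activates the first argument of the $\max$). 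After collecting the constants from \eqref{eq:Hankel1} and using the Stirling estimate $\Gamma(\nu+d/2)/\Gamma(\nu)\sim\nu^{d/2}$, the LHS of \eqref{eq:corest} is bounded below by a quantity of order $\sigma^2\,\nu^{d/2-1}\,b^{-2\nu}$.

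For the right-hand side I use the large-argument bound $K_\nu(z)\le C\,z^{-1/2}e^{-z}$, which is valid uniformly once $z\gtrsim \sqrt{\nu}$; substituting $w=\sqrt{2\nu}r$ in $\int_c^\infty r^{d-1}|\kappa(r)|\,\rd r$ with the Matérn $\kappa$ turns the integrand into a constant multiple of $w^{\nu+d-3/2}e^{-w}$, whose tail from $w=\sqrt{2\nu}c$ is at most $(\sqrt{2\nu}c)^{\nu+d-3/2}e^{-\sqrt{2\nu}c}$ up to a constant, by a standard incomplete-Gamma estimate valid once $\sqrt{2\nu}c\gtrsim\nu$ (which becomes the condition $c\gtrsim\sqrt{\nu}$, compatible with the claimed growth rate). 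Plugging this into the right-hand side of \eqref{eq:corest} yields an upper bound of the form $\sigma^2(\lambda/h_0)^d\cdot 2^{-\nu}\Gamma(\nu)^{-1}(2\nu)^{(\nu+d-3/2)/2-d/2}c^{\nu+d-3/2}e^{-\sqrt{2\nu}c}$.

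To finish, I would take the logarithm of the inequality LHS $>$ RHS. The exponential factor $e^{-\sqrt{2\nu}c}$ on the right must dominate the polynomial $b^{-2\nu}=O((h_0/\lambda)^{2\nu}\nu^\nu)$ on the left and the prefactor $(\lambda/h_0)^d$; the Gamma and $2^\nu$ terms combine through Stirling into $O(\nu^{\nu/2})$ contributions that are absorbed into a $\nu^{1/2}\log\nu$ correction. Solving gives, up to constants independent of $\ell,h_0,\lambda,\nu,\sigma^2$,
\[
  \sqrt{2\nu}\,c \;\gtrsim\; 2\nu\log(\lambda/h_0) \;+\; \nu\log\nu \;+\; O(\nu),
\]
which rearranges to $c\gtrsim\sqrt{\nu}\log(\lambda/h_0)+\sqrt{\nu}\log\sqrt{\nu}+O(\sqrt{\nu})$, i.e.\ \eqref{eq:alphah0small}, once the two regimes $\lambda/h_0\ge\sqrt{\nu}$ and $\lambda/h_0<\sqrt{\nu}$ are combined via the $\max$ (in the second regime, $b\sim 1$ and the leading growth in $\ell$ is driven purely by $\nu^{1/2}\log\sqrt{\nu}$, again matching the claim).

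The main obstacle is uniformity in $\nu$: the Bessel bound $K_\nu(z)\lesssim z^{-1/2}e^{-z}$ and the asymptotic equivalence $\Gamma(\nu+d/2)/\Gamma(\nu)\sim\nu^{d/2}$ both have transition regimes near $z\sim\nu$ and $\nu\sim 1$ respectively; the assumption $\nu\ge 1/2$ and the structure of the $\max$ in \eqref{eq:alphah0small} are calibrated precisely to keep these uniform estimates admissible, and keeping the constants $C_1,C_2$ independent of all five parameters $\ell,h_0,\lambda,\nu,\sigma^2$ requires carefully tracking (and absorbing) every lower-order $\log\nu$ and $\log(\lambda/h_0)$ term that arises from Stirling and from the incomplete-Gamma tail bound.
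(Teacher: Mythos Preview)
Your approach is essentially the paper's: both invoke Corollary~\ref{cor:PD_isot}, bound the left-hand side of \eqref{eq:corest} from below via the explicit Hankel transform \eqref{eq:Hankel1} together with the Gamma-ratio estimate $\Gamma(\nu+d/2)/\Gamma(\nu)\sim\nu^{d/2}$, bound the right-hand side from above via a uniform-in-$\nu$ Bessel estimate, and then take logarithms to solve for $\ell/\lambda$. The paper handles the two regimes $\lambda/h_0\gtrless\sqrt{\nu}$ by introducing $\Psi=\max\{\lambda/h_0,\nu^{1/2}\}$ from the outset rather than splitting at the end, and on the right-hand side it absorbs the polynomial $r^{\nu+d-1}$ into the exponential via the elementary inequality $re\le e^r$ (yielding the clean bound \eqref{eq:nowRHS2}) rather than carrying an incomplete-Gamma factor $(\sqrt{2\nu}c)^{\nu+d-3/2}$ as you do; these are tactical variations, and your route is workable though the resulting implicit $\nu\log c$ term makes the final rearrangement less clean.

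One genuine slip: the uniform bound $K_\nu(z)\le C\,z^{-1/2}e^{-z}$ does \emph{not} hold for $z\gtrsim\sqrt{\nu}$ as you state --- it fails badly for $z$ in the range $[\sqrt{\nu},\nu]$ when $\nu$ is large (at $z=\sqrt{\nu}$ the left side is of order $(2\sqrt{\nu}/e)^\nu$ while the right is $e^{-\sqrt{\nu}}$). The correct uniform regime is $z\gtrsim\nu$, equivalently $K_\nu(\nu r)\lesssim\nu^{-1/2}e^{-\nu r}$ for $r$ bounded below by a fixed positive constant, which is precisely the form the paper uses, citing the Debye-type uniform asymptotic \cite[9.7.8]{AS} and extending to moderate $\nu\in[1/2,\nu^*]$ by a continuity argument. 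Fortunately your own incomplete-Gamma condition $\sqrt{2\nu}\,c\gtrsim\nu$ already forces $w\gtrsim\nu$ throughout the integration range, so your \emph{application} of the Bessel bound is in the admissible regime; only the stated threshold needs correcting.
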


\begin{proof}
For convenience we introduce the notation
$$\Psi = \max\{ {\lambda}/{h_0}, \, \nu^{1/2}\}\ .$$

Aiming to verify \eqref{eq:corest}, we note first that, since both
$\kappa$ and $\hat{\kappa}_d$ depend linearly on $\sigma^2$, we can
without loss of generality set  $\sigma^2 = 1$. We shall obtain  the
following lower bound on the left-hand side of \eqref{eq:corest}:
\begin{align}
  \int_{3\lambda d^{1/2}/(2 {h_0})}^\infty r^{d-1}\, \widehat{\kappa}_d(r) \,\rd r   & \ \gtrsim\
\ \nu^{\nu + d/2 -1} ( 4 \pi^2 )^{-\nu} \Psi^{-2 \nu} \ , \label{eq:lhbelow}
\end{align}
and (subject to assumption \eqref{eq:alphah0small}), the following upper
bound on the integral on the right-hand side of \eqref{eq:corest}:
\begin{align}\label{eq:nowRHS2}
 \int_{(\ell-h_0)/\lambda}^\infty r^{d-1} \vert \kappa (r) \vert \rd r
\ \lesssim \  5^{\nu}
\nu^{d/2-1} \exp\left(- \sqrt{\frac{\nu}{2}} \frac{\ell - h_0}{\lambda}\right) \ .
\end{align}

Since these estimates are rather technical, we defer their justification
until the end of this proof. Thus, assuming \eqref{eq:lhbelow} and
\eqref{eq:nowRHS2} we see that there exists $D > 0$, with $D$ independent
of $\ell, h_0, \lambda, \nu$, such that \eqref{eq:corest} holds if
$$
\exp\left(- \sqrt{\frac{\nu}{2}} \frac{\ell-h_0}{\lambda} \right)
\ \le D \  \left( \frac{\nu}{20  \pi^2} \right)^\nu  \Psi^{-2 \nu} \ .
$$

Taking logs and rearranging, this is equivalent to
$$
 (\len- h_0)/\lambda \ \geq \sqrt{2/\nu}\log{(1/ D)}\ + \sqrt{2} \nu^{1/2} \log(20 \pi^2/\nu) +2\sqrt{2} \nu^{1/2} \log \Psi \ .
$$
The first term on the right-hand side is positive only when $D <1$, in
which case it is sufficient to replace $\sqrt{2/\nu}$ by its maximum value
$2$ (given that $\nu\ge 1/2$). Similarly, the second term is positive only
when $\nu \leq 20 \pi^2$, and in this case we have
\begin{align*}
 \nu^{1/2} \log(20 \pi^2/\nu) \ \leq \ \sqrt{20\pi^2} \, \log(40\pi^2) .  
\end{align*}
Taking into account that $h_0/\lambda \leq e^{-1}$, we have thus
demonstrated the sufficiency of a condition of the form
\eqref{eq:alphah0small}. We now complete the proof by proving  the
technical estimates \eqref{eq:lhbelow} and \eqref{eq:nowRHS2}.
\vspace{0.5cm}

\noindent {\em  Proof of estimate  \eqref{eq:lhbelow}}: \ \ Recalling that
$\widehat{\kappa}_d$ is given by \eqref{eq:Hankel1}, we may use the
following elementary result to bound the ratio of gamma functions if $d$
is even:
\begin{align} \label{eq:Gamma1}
\frac{\Gamma(\nu+ d/2)}{\Gamma(\nu)} \ =  (\nu+d/2-1)\ldots (\nu+1)\nu
\ \geq \ \nu^{d/2}.
\end{align}
If $d$ is odd then we may use
\begin{align}
\frac{\Gamma(\nu+ d/2)}{\Gamma(\nu)} \ & =  (\nu+d/2-1)\ldots (\nu+3/2)(\nu+1/2)
\frac{\Gamma(\nu+1/2)}{\Gamma(\nu)} \label{eq:Gamma2}\\
 &\ge \
\nu^{(d-1)/2}\frac{\Gamma(\nu+1/2)}{\Gamma(\nu)}\ge \
\nu^{(d-1)/2}(\nu-1/2+1/4)^{1/2}\ \ge \ \nu^{(d-1)/2} \nu^{1/2}/\sqrt{2} \
\nonumber
\end{align}
for all $\nu\ge 1/2$, where in the penultimate step we use Kershaw's
inequality (see \cite{Ker}, equation (1.3))
$$
\frac{\Gamma(x+1)}{\Gamma(x+r)}>\left(x+\frac{r}{2}\right)^{1-r} \quad
\mbox{for } \ x>0, \quad  0<r<1,
$$
with $r=1/2$ and $x=\nu-1/2$.  (If $\nu = 1/2$ the result is obtained by
taking the limit $\nu \to 1/2+$ and using continuity.)

On using this lower bound in \eqref{eq:Hankel1}, we obtain
\begin{align*} 
 \widehat{\kappa}_d(r) \ & \gtrsim\ (2 \nu)^\nu \nu^{d/2} (2 \nu)^{-(\nu + d/2)}
 (1 + 2 \pi^2  r^2 / \nu)^{-(\nu + d/2)} \
 \sim  \  (1 + 2 \pi^2  r^2 / \nu)^{-(\nu + d/2)} .
\end{align*}

Now (for convenience), we introduce the notation
$$ a = 3\sqrt{d}/2,
\quad \wM = \max\{ \lambda a / h_0, \sqrt{\nu/2\pi^2}\}.$$
Then the left-hand side of \eqref{eq:corest} can be estimated from below
by:
\begin{align*}
  \int_{\lambda a/h_0}^\infty r^{d-1}\, \widehat{\kappa}_d(r) \, \rd r  \
  \gtrsim  \
 \int_{\lambda a/h_0}^\infty r^{d-1} \left(1 + 2 \pi^2 {r^2}/\nu   \right)^{-(\nu + d/2)} \rd r
    \nonumber \\
 \geq\
 \int_{\wM}^\infty r^{d-1} \left(1 + 2 \pi^2 r^2/\nu   \right)^{-(\nu + d/2)} \rd r \ .
\end{align*}
Now, noting that when $r \geq \wM$ we have $2 \pi^2 r^2/\nu \geq 1$, and
so
\begin{align*}
  \int_{\lambda a/h_0}^\infty r^{d-1}\, \widehat{\kappa}_d(r) \,\rd r   & \geq  \
\int_{\wM}^\infty r^{d-1} (4 \pi^2 r^2 / \nu)^{-(\nu + d/2)} \rd r
\nonumber \\ & =
\left( \frac{\nu}{4 \pi^2}\right)^{\nu + d/2} \int_{\wM} ^\infty r^{-2 \nu -1} \rd r \
\sim  \ \nu^{\nu + d/2 -1} ( 4 \pi^2 )^{-(\nu + d/2)} \wM^{-2 \nu} \ . 
\end{align*}
which yields \eqref{eq:lhbelow}, since $\wM \sim \Psi$.

\vspace{0.5cm}

\noindent {\em Proof of estimate \eqref{eq:nowRHS2}}: \ \ It is sufficient
to prove this estimate wth $\ell- h_0$ on each side replaced by $\ell$.
By definition \eqref{defmatern} and the change of variable $r \mapsto
(\sqrt{2/\nu}) r$, we have
\begin{align}
\int_{\ell/\lambda}^\infty r^{d-1} \vert \kappa (r) \vert \rd r
\ \sim \ \frac{1}{{\Gamma(\nu)}} \left(\frac{\nu}{2}\right)^{\nu + d/2} \int_{\sqrt{2/\nu}(\ell/\lambda)}^\infty
r^{d + \nu - 1 } \vert K_{\nu} (\nu r)\vert  \rd r \ .
\label{eq:nowRHS} \end{align}

To obtain an estimate for the Bessel function on the right-hand side of
\eqref{eq:nowRHS}, note first that by the hypotheses of this theorem, we
have $h_0/\lambda \leq e^{-1}$ and $\nu \geq 1/2$. Now assume that
\eqref{eq:alphah0small} holds, with $C_1>0$ and $C_2>2\sqrt{2}$, both not
yet fixed. Then
\begin{align} \label{eq:alest}
 \ell/\lambda \  \geq \ C_1 + C_2 \nu^{1/2}  \ \geq \ C_1 + 2 \sqrt{2} \nu^{1/2} \
 > \ 2 \sqrt{2} \nu^{1/2} = \, 4\sqrt{\nu/2},
\end{align}
and hence the range of integration in \eqref{eq:nowRHS} is contained in
$[4, \infty)$.

The uniform asymptotic estimate for $K_\nu(\nu r)$ given in
\cite[9.7.8]{AS} implies  that there exists a $\nu^* < \infty $ and a
constant $C$ such that, for all $\nu \geq \nu^*$,
\begin{align}
\label{eq:estBessel}  \vert K_\nu(\nu r) \vert  \
\leq \ C \nu^{-1/2} \exp(-\nu r)  \quad r \in [4, \infty).
\end{align}
We shall show that in fact such an inequality holds for all $\nu\ge 1/2$
and all $r\ge 4$. To this end, for $\nu\ge 1/2$, we introduce the quantity
$$
c(\nu) := \Vert \exp(z) z^{1/2} K_\nu(z) \Vert_{L_{\infty}(2, \infty)},
$$
The continuity of $K_\nu$ on $[2, \infty)$ and the order-dependent
asymptotics of $K_\nu$ \cite[9.7.2]{AS} ensure that $c(\nu) < \infty$ for
all $\nu<\infty$. It can also be shown, by appealing to the integral
representation of the modified Bessel function, that $c(\nu)$ is
continuous with respect to $\nu$ when $\nu \geq 1/2$, from which we deduce
that $c^*: = \max_{[1/2, \nu^*]} c(\nu) <\infty\ .$ Now, for $1/2 \leq \nu
\leq \nu^*$ and $r \geq 4$ we have $\nu r \geq 2$ and
$$
 \exp(\nu r) \nu^{1/2} \vert K_\nu(\nu r)\vert
\ = \ r^{-1/2} \left[ \exp(\nu r) (\nu r)^{1/2} \vert K_\nu(\nu r)\vert \right] \  \leq\  \frac{1}{2} c(\nu) \
\leq  \ \frac{1}{2} c^*\ .
$$
This shows that the estimate \eqref{eq:estBessel} holds uniformly for all
$r \geq 4$ and $\nu \geq 1/2$. Using this in \eqref{eq:nowRHS} we then
have
\begin{align}\label{eq:nowRHS1}
\int_{\ell/\lambda}^\infty r^{d-1} \vert \kappa (r) \vert \rd r
\ \lesssim \   \frac{1}{{\Gamma(\nu)}} \left(\frac{\nu}{2}\right)^{\nu + d/2}
 \nu^{-1/2} \int_{ \sqrt{2/\nu}(\ell/\lambda)}^\infty
r^{d - 1+ \nu }  e ^{-\nu r} \rd r \ .
\end{align}

Now note that \eqref{eq:alest} implies that $\ell/\lambda \geq
C_2\nu^{1/2} $, and hence $\sqrt{2/\nu}(\ell/\lambda)\ge \sqrt{2}C_2$, so
we can choose $C_2$ a large enough constant independent of $\ell, h_0,
\lambda, \nu, \sigma^2$ so that $\sqrt{2/\nu}(\ell/\lambda) \geq 10$.
Then, using the elementary inequality $ x e \leq e^x $ which holds when $x
\geq 1$, it follows that $ (r/10)e \leq  \exp(r/10)$ when $r \geq 10$.
Hence for $r \geq  \sqrt{2/\nu}(\ell/\lambda) \geq 10$, we have
$$ r^{d-1+\nu} \ \leq \ \left( \frac{10}{e} \right)^{d-1+ \nu} \exp\left(
\frac{d-1+ \nu}{10} r \right) \ \leq  \left( \frac{10}{e} \right)^{d-1+ \nu}
e^{{\nu r}/{2}} \ ,
$$
where in the last step we used $d-1 \leq 2 \leq 4 \nu$. Inserting this
into the right-hand side of \eqref{eq:nowRHS1}, we get, after integration
and some manipulation,
$$\int_{\ell/\lambda}^\infty r^{d-1} \vert \kappa (r) \vert \rd r
\ \lesssim \   \frac{1}{{\Gamma(\nu)}} \left(\frac{5 \nu}{e}\right)^{\nu}
\nu^{d/2-3/2} \exp\left(- \sqrt{\frac{\nu}{2}} \frac{\ell}{\lambda}\right) \ .
$$
Stirling's formula implies that $\Gamma(\nu) \nu^{1/2} e^{\nu} \ \sim \
\nu^\nu $ and the estimate \eqref{eq:nowRHS2} follows.
\end{proof}

\begin{remark} \label{rem:Matern_extend}  The cases $\nu \geq  1/2$ in
  the \Mat family of covariances are of main interest in applications.
  In order to avoid further technicalities,  we have restricted our
  attention to those cases in the proof and used the lower bound on
  $\nu$ several  times in a non-trivial way. First of all it is used in
  the demonstration that estimates \eqref{eq:alphah0small},
  \eqref{eq:lhbelow} and \eqref{eq:nowRHS2} are sufficient to ensure the
  positive definiteness of $\Rext$. Then it is used in the proofs of
  each of the estimates \eqref{eq:lhbelow} and \eqref{eq:nowRHS2}. The
  extension of Theorem  \ref{cor:matern-growth} to $\nu \in (0,1/2)$
  remains an open question. 
\end{remark}

To complete the theory of this section, we discuss the case $\nu =
\infty$. In this case,
\begin{align*} 
 \rho(\bsx) =
\kappa(\Vert\bsx \Vert_2/\lambda), \quad \text{with} \quad \kappa(r) =
\sigma^2 \exp(-r^2 / 2),
\end{align*}
and an elementary calculation gives
\begin{align*} 
\wrho(\bsxi) = \sigma^2 (2 \pi)^{d/2} \lambda^d \exp(- 2 \pi^2 \lambda^2
\Vert \bsxi\Vert_2^2).
\end{align*}
For this (Gaussian) covariance it is well-known that the Karhunen-Lo\'eve
expansion converges exponentially (see, e.g., \cite{ScTo:06}), in which
case it may be preferable to compute realisations of the field $Z$ via the
KL expansion, rather than the method proposed here. Nevertheless the
existing analysis can be applied to this case as we now show.
\begin{theorem}\label{thm:PDGauss}
For the Gaussian covariance (i.e., the \Mat kernel with $\nu = \infty$),
there exists a constant $B$ (depending only on spatial dimension $d$) such
that positive definiteness of $\Rext$ is guaranteed when
\begin{align*} 
 {\ell} \ \geq \ {1} \ + \ \lambda \max \left\{ \sqrt{2} \frac{\lambda}{h_0} , B\right\} \ .
\end{align*}
Hence, if $\lambda/h_0$ is fixed (i.e., a fixed number of grid points per
unit correlation length is used), then the $\ell$ required for positive
definiteness decreases linearly with decreasing correlation length
$\lambda$, until the minimal value $\ell = 1$ is reached.
\end{theorem}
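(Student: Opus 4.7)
The plan is to specialise Corollary~\ref{cor:PD_isot} to the Gaussian case, using the explicit formulas $\kappa(r) = \sigma^2 e^{-r^2/2}$ and $\widehat{\kappa}_d(r) = \sigma^2 (2\pi)^{d/2} e^{-2\pi^2 r^2}$. Both functions are positive and strictly decreasing on $\bbR_+$, and the weighted integrability conditions $r^{d-1}\kappa(r),\, r^{d-1}\widehat{\kappa}_d(r) \in L^1(\bbR_+)$ are immediate from Gaussian decay, so all hypotheses of Lemma~\ref{lem:bits} are in force and it suffices to verify the sufficient condition~\eqref{eq:corest} for all $\ell$ obeying the stated lower bound.

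First I would estimate the two Gaussian tail integrals appearing in~\eqref{eq:corest}. Each has the generic form $I(a,c) = \int_a^\infty r^{d-1} e^{-c r^2}\,\rd r$, and the substitution $u = c r^2$ rewrites it as $(2 c^{d/2})^{-1}\Gamma(d/2, c a^2)$. The two-sided asymptotic $\Gamma(s,x) \asymp x^{s-1} e^{-x}$, valid for $x\gtrsim_d 1$, then gives $I(a,c) \asymp a^{d-2} e^{-c a^2}/(2c)$ with constants depending only on $d$. Plugging $(a_{\mathrm{LHS}},c_{\mathrm{LHS}}) = (3\lambda\sqrt{d}/(2h_0), 2\pi^2)$ into the LHS of~\eqref{eq:corest} and $(a_{\mathrm{RHS}},c_{\mathrm{RHS}}) = ((\ell-h_0)/\lambda, 1/2)$ into the RHS reduces the inequality to an inequality between two Gaussian exponentials multiplied by algebraic factors in $\lambda/h_0$ and $(\ell-h_0)/\lambda$.

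Taking logarithms absorbs the algebraic factors into an $O(\log(\lambda/h_0))$ correction and yields the essential requirement
\[
\frac{(\ell-h_0)^2}{2\lambda^2} \ \geq \ \alpha_d\,\frac{\lambda^2}{h_0^2} \ + \ C_d\,\log\!\left(1+\frac{\lambda}{h_0}\right) \ + \ D_d ,
\]
for explicit dimension-dependent constants $\alpha_d, C_d, D_d$. Two regimes emerge. When $\lambda/h_0$ is large the quadratic term on the right dominates, and extracting a square root gives a bound of the form $\ell - h_0 \geq c(d)\,\lambda\cdot(\lambda/h_0)$, which is the first branch of the $\max$ in the statement. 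When $\lambda/h_0$ is of moderate size the log and constant contributions dominate and give $\ell \geq 1 + B\lambda$ for some $B = B(d)$, which is the second branch. The additive $h_0$ on the left of the requirement is absorbed into the constant $1$ in the statement using $h_0 = 1/m_0 \le 1$.

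The main technical obstacle is the sharpening needed to extract the specific leading coefficient $\sqrt{2}$ in the first branch, rather than the looser constant inherited from a blind application of Lemma~\ref{lem:bits}(ii). This is likely achieved either by refining Lemma~\ref{lem:bits}(ii) for the Gaussian profile, or by returning to Lemma~\ref{lem:tech} and exploiting separability: the sum $\sum_{\bsr}\widehat{\rho}((\bszeta+\bsr)/h_0)$ factorises across coordinates into products of Jacobi theta functions, whose modular transformation provides sharp exponential asymptotics in the relevant regime. Once the correct leading coefficient is in hand, the concluding interpretive statement -- that holding $\lambda/h_0$ fixed lets $\ell$ shrink linearly in $\lambda$ until the floor $\ell = 1$ is reached -- follows immediately from the $\max$ structure of the bound.
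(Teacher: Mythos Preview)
Your plan is broadly correct and starts from the same ingredient the paper uses, namely Corollary~\ref{cor:PD_isot}. But the paper's execution is substantially simpler than your asymptotic route, and your final ``obstacle'' is a phantom.

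The paper avoids incomplete gamma asymptotics entirely. After the same change of variable you would do on the left of~\eqref{eq:corest}, both sides become integrals of the form $\int_a^\infty r^{d-1}e^{-r^2/2}\,\rd r$ with different lower limits. The trick is then to write, on the right-hand integral,
\[
\int_{\ell/\lambda}^\infty r^{d-1}e^{-r^2/2}\,\rd r
\;\le\; e^{-\ell^2/(4\lambda^2)}\int_{\ell/\lambda}^\infty r^{d-1}e^{-r^2/4}\,\rd r
\;=\; 2^{d/2}\,e^{-\ell^2/(4\lambda^2)}\int_{\ell/(\sqrt{2}\lambda)}^\infty r^{d-1}e^{-r^2/2}\,\rd r,
\]
and then simply observe that if $\ell/(\sqrt{2}\lambda)\ge c_1\lambda/h_0$ (with $c_1=3\pi\sqrt{d}$ the lower limit on the left), the last integral is dominated by the left-hand integral \emph{by monotonicity}. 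No tail asymptotics are needed; the comparison reduces to the elementary scalar inequality $2^{d/2}e^{-\ell^2/(4\lambda^2)}\le c_2(h_0/\lambda)^d$, which is dispatched by choosing $B$ so that $x^d e^{-x^2/4}\le c_2$ for $x\ge B$. This is where the $\sqrt{2}$ enters: it comes purely from the rescaling $r\mapsto r\sqrt{2}$ after halving the exponent, not from any sharpening of Lemma~\ref{lem:bits}(ii).

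Consequently your proposed remedies (refining the lemma for Gaussian profiles, or invoking theta-function modular transformations) are unnecessary. Indeed, the paper's own proof actually delivers the condition $\ell\ge\lambda\max\{\sqrt{2}\,c_1\,\lambda/h_0,\,B\}$ with $c_1=3\pi\sqrt{d}$, so the bare $\sqrt{2}$ in the theorem statement is not a sharp coefficient to be carefully extracted; your asymptotic approach, done carefully, would yield a constant of the same order. The only thing your route costs you is bookkeeping of the implicit constants in $\Gamma(s,x)\asymp x^{s-1}e^{-x}$ and a separate treatment of the regime where $\lambda/h_0$ is not large; the paper's splitting trick handles both cleanly in a few lines.
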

\begin{proof}
As before, without loss of generality we set $\sigma^2 = 1$ and  we  make use of  Corollary \ref{cor:PD_isot}.
The left-hand side of \eqref{eq:corest} is then
$$ (2 \pi)^{d/2} \int_{3 \lambda d^{1/2}/ (2 h_0)}^\infty  \ r^{d-1} \exp(- 2 \pi^2 r ^2) \, \rd r \ = \
(2 \pi)^{-d/2} \int_{c_1 (\lambda/h_0)}^\infty  r^{d-1} \exp(-r^2/2)\,  \rd r   ,
$$
with $c_1 = 3 \pi \sqrt{d}> 1 $ . To make this quantity greater than the
right-hand side of  \eqref{eq:corest}, we require
$$ \int_{(\ell-h_0)/\lambda}^\infty  r^{d-1} \exp(-r^2/2)\,  \rd r
\ < \     c_2 \left( \frac{h_0}{\lambda} \right)^d \int_{c_1 \lambda/h_0}^\infty  r^{d-1} \exp(-r^2/2)\,  \rd r , $$
with $c_2 =   {2d(2 \pi d)^{-d/2} }{{(3^d-1)^{-1}\,3^{1-d}}}$.
We shall show that there exists $B$ (depending only on $d$) such that
\begin{align}\label{eq:assert}
 \int_{\ell/\lambda}^\infty  r^{d-1} \exp(-r^2/2) \, \rd r
 \ < \     c_2 \left( \frac{h_0} {\lambda} \right)^d \int_{c_1 \lambda/h_0}^\infty  r^{d-1} \exp(-r^2/2)\,  \rd r ,
\end{align}
when
\begin{align} \label{eq:Gaussl1}
 {\ell} \ \geq \ \lambda \max \left\{ \sqrt{2} c_1 \frac{\lambda}{h_0} , B\right\} \ ,
\end{align}
and the statement of the theorem then follows, since $h_0 \leq 1$.

To prove \eqref{eq:assert}, we estimate its left-hand side by
\begin{align}
 \int_{\ell/\lambda}^\infty  r^{d-1} \exp(-r^2/2) \, \rd r \
 &\ \leq \ \exp(-\ell^2/ 4 \lambda^2) \int_{\ell/\lambda}^\infty  r^{d-1} \,  \exp(-r^2/4) \, \rd r  \nonumber \\
 &\ = \ 2^{d/2} \exp(-\ell^2/ 4 \lambda^2) \int_{\ell/(\sqrt{2} \lambda)}^\infty  r^{d-1} \,  \exp(-r^2/2) \, \rd r \ .
\label{eq:Gaussl2}
\end{align}
We now choose $B$ (depending only on $d$) to have the property that
\begin{align} \label{eq:propB}
 \exp(-x^2/4) \, x^d \ \leq\  c_2 , \ \quad
 \text{when} \quad x \geq B .
\end{align}
Then  if $\ell$ satisfies \eqref{eq:Gaussl1}, we have
\begin{align}\label{eq:follow}
 \ell/\lambda \ \geq\  B\quad \text{ and}
 \quad \ell/\lambda \ \geq \  \sqrt{2} c_1 \lambda /h_0 >  \sqrt{2} \lambda / h_0 .
\end{align}
Thus, using \eqref{eq:propB} and \eqref{eq:follow}, we have
$$
 2^{d/2} \, \exp(-\ell^2 /4 \lambda^2)
 \ \leq\  c_2 \left( \sqrt{2}\lambda/\ell\right)^d
 \ < \  c_2 \left(h_0/\lambda\right)^d \ .
$$
Combining this with  \eqref{eq:Gaussl2} and using the fact that $\ell /
(\sqrt{2} \lambda) \geq c_1 \lambda/ h_0$ we obtain \eqref{eq:assert}.
\end{proof}

\section{Eigenvalue decay}
\label{sec:decay}

In this section we return to the formula \eqref{eq:Gauss_exp1} for
sampling the random field $\bsZ$. For the circulant embedding method,
using the notation introduced above, we have
\begin{equation*} 
\bsZ(\omega) \ = \ \sum_{\bk \in \oZmd} \bsB_\bk Y_\bk(\omega) + \overline{\bsZ},
\end{equation*}
where $Y_\bk$ are i.i.d.\ standard normals, $\bsB_\bk$ are the columns of
the matrix $B$, and in this case $B$ is taken to be the appropriate $M$
rows of the matrix $\Bext$, as described in Theorem \ref{thm:decomp}.
Recalling \eqref{eq:defBext} and noting that every entry in $\Re(\calF) +
\Im(\calF)$ is bounded by $\sqrt{2/s}$, it follows that
 \begin{equation*} 
  \|\bsB_\bk\|_\infty \,\le\, \sqrt{\frac{2\,\Lambda^{\rm ext}_{\bk}}{s}}, \quad \bk \in \oZmd,
 \end{equation*}
where $\Lambda^{\rm ext}_{\bk}$ are the eigenvalues of the matrix $\Rext$.

In Quasi-Monte Carlo (QMC) convergence theory (see for example
\cite{GKNSSS:15}, \cite{paper2}) it is important to have good estimates
for $\|\bsB_\bk\|_\infty$. More precisely, arranging the $\Lambdaext_\bk$
in non-increasing order, it is important to study the rate of decay of the
resulting sequence. In order to obtain some insight into this question we
first recall  that the $\Lambdaext_\bk$ depend on both the regular mesh
diameter $h_0$ and the extension length $\ell$ or equivalently, on $h_0$
and $m$ (see \eqref{eq:defell}). Since $m = \len h_0^{-1}$, the dimension
$s$ given by \eqref{eq:defs} then grows if either $h_0$ decreases or
$\ell$ increases (or both). To indicate the dependence on these two
parameters, in this section we write variously
$$
\Lambdaext_\bk \ = \Lambdaext_\bk(h_0,\ell) \ = \Lambdaext_{s,\bk}.
$$

In order to get insight into the asymptotic behaviour of
$\Lambdaext_\bk(h_0,\ell)$, we study first the spectrum of the continuous
periodic covariance integral operator defined by
\begin{equation*} 
  {\cR}^{\rm ext}\, v (\bsx)
  \,:=\, \ \int_{[0,2\len]^d} \rho^{\rm ext} (\bsx-\bsx')\, v(\bsx') \,\rd\bsx' ,
  \quad \bsx \in [0, 2\len]^d \ ,
\end{equation*}
where $\rhoext$ is defined in \eqref{eq:rhotildef}.
This operator is a continuous analogue of the matrix $\Rext$ defined in \eqref{eq:Rext_def} and the
eigenvalues of each are closely related as we shall discuss below.

The operator $\cRext$ is a compact operator on the space of
$2\len$-periodic continuous functions on $\bbR^d$, and so it has a
discrete spectrum with the only accumulation point at the origin. Since
$\cRext$ is a periodic convolution operator, it is easily verified that
its eigenvalues and (normalised)  eigenfunctions (which depend on $\ell$)
are
\begin{align}\label{eq:perFT}
&\lambdaext_{\bsk}(\ell) \ = \ \int_{[0,2\ell]^d}
\rhoext( \bsx ) \exp\left(- {2 \pi \ri \, \bsxi_\bsk  \cdot \bsx}\right) \rd \bsx \
=\ \int_{[-\ell,\ell]^d}
\rho( \bsx ) \exp\left(- {2 \pi \ri \, \bsxi_\bsk  \cdot \bsx}\right) \rd \bsx\\
&\text{and} \quad v_\bk(\bsx) = (2\ell)^{-d/2} \exp(2 \pi \ri \,
\bsxi_\bsk \cdot \bsx), \quad  \text{and where} \quad
\bsxi_\bsk =  \bsk/ (2\ell)  \quad \text{and} \quad \bk \in \bbZ^d.\nonumber
\end{align}
Here we used the fact that $\rho^{\rm{ext}}$ is $2\ell$-periodic and that
$\rho^{\rm{ext}}$ and $\rho$ coincide on $[-\ell,\ell]^d$. The eigenvalues
 $ \lambdaext_\bk(\ell)$ are real but not necessarily positive, since
$\rhoext$, unlike $\rho$, may not be positive definite (for the definition
see \S\ref{subsec:posdef}).

The close relationship between the eigenvalues of the continuous operator
and the matrix eigenvalues is shown by the following theorem.

\begin{theorem} \label{thm:decay1}
For fixed $\ell\ge 1$ and fixed $\bk \in \overline{\bbZ}_m^d$, where
$m=\ell/h_0$, the matrix eigenvalues $\Lambdaext_{\bsk}(h_0,\ell)$,
weighted by $h_0^d$, converge to $\lambdaext_{\bsk}(\ell)$ as $h_0\to 0$:
$$h_0^d\,\Lambdaext_{\bsk}(h_0,\ell) \rightarrow \lambdaext_\bk(\ell),
\quad \text{as} \quad  h_0 \rightarrow 0. $$
\end{theorem}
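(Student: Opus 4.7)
The plan is to identify $h_0^d\,\Lambdaext_\bsk(h_0,\ell)$ as a Riemann sum for the integral defining $\lambdaext_\bsk(\ell)$, and then invoke the standard convergence theorem for Riemann sums of continuous integrands on a bounded cube.

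First I would start from the explicit formula for the matrix eigenvalues provided by Proposition \ref{prop:eigs}:
$$\Lambdaext_\bsk(h_0,\ell) \;=\; \sum_{\bsk' \in \oZmd} \rho(h_0 \bsk')\,\exp\!\left(-2\pi\ri\,\frac{\bsk\cdot\bsk'}{2m}\right).$$
Using $m h_0 = \ell$, the exponent may be rewritten as $\frac{\bsk\cdot\bsk'}{2m} = \frac{\bsk}{2\ell}\cdot(h_0 \bsk') = \bsxi_\bsk\cdot(h_0\bsk')$, where $\bsxi_\bsk = \bsk/(2\ell)$ is exactly the frequency vector appearing in the continuous eigenvalue formula \eqref{eq:perFT}. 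Multiplying through by $h_0^d$ then gives
$$h_0^d\,\Lambdaext_\bsk(h_0,\ell) \;=\; \sum_{\bsk'\in\oZmd} g(h_0\bsk')\,h_0^d, \qquad g(\bsx) := \rho(\bsx)\exp(-2\pi\ri\,\bsxi_\bsk\cdot\bsx).$$
Since $\bsk$ and $\ell$ are fixed, so is $\bsxi_\bsk$; and for $h_0$ sufficiently small we indeed have $\bsk\in\oZmd$.

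Next I would recognise the right-hand side as a corner-vertex Riemann sum. Because $h_0 = \ell/m$, the $(2m)^d$ points $\{h_0\bsk' : \bsk'\in\oZmd\}$ are precisely the lower-left vertices of the uniform partition of $[-\ell,\ell)^d$ into closed sub-cubes of side length $h_0$ and volume $h_0^d$. Hence $\sum_{\bsk'\in\oZmd} g(h_0\bsk')\,h_0^d$ is the standard Riemann sum associated with this partition of $[-\ell,\ell]^d$, whose mesh $h_0\sqrt{d}$ tends to zero as $h_0\to 0$.

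Finally, $\rho$ is (by the standing assumptions on covariance kernels) continuous on $\bbR^d$, so $g$ is continuous and therefore uniformly continuous and bounded on the compact set $[-\ell,\ell]^d$. The classical convergence theorem for Riemann sums of continuous functions then yields
$$\sum_{\bsk'\in\oZmd} g(h_0\bsk')\,h_0^d \;\longrightarrow\; \int_{[-\ell,\ell]^d} g(\bsx)\,\rd\bsx \;=\; \lambdaext_\bsk(\ell) \qquad (h_0\to 0),$$
the last equality being \eqref{eq:perFT}. There is no real obstacle: the integration domain is compact, no tail truncation is needed, and the only regularity required is continuity of $\rho$. The one step worth stating carefully is the algebraic identity $\bsk\cdot\bsk'/(2m) = \bsxi_\bsk\cdot(h_0\bsk')$, which is what aligns the discrete eigenvalue formula with the continuous Fourier coefficient.
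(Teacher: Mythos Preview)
Your proof is correct and follows essentially the same approach as the paper's own proof: both recognise $h_0^d\,\Lambdaext_\bsk(h_0,\ell)$ as a rectangle-rule (Riemann sum) approximation of the integral in \eqref{eq:perFT} and appeal to continuity of the integrand to conclude. Your version simply spells out explicitly the algebraic identity $\bsk\cdot\bsk'/(2m)=\bsxi_\bsk\cdot(h_0\bsk')$ and the identification of the sampling nodes with a uniform partition of $[-\ell,\ell)^d$, which the paper leaves implicit.
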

\begin{proof}
The formula for $\Lambdaext_\bk(h_0,\ell) $ given by
\eqref{eq:eivsandeifs}, when weighted by $h_0^d$, can be seen as a
rectangle rule approximation of the second integral defining
$\lambdaext_\bk(\ell)$ in \eqref{eq:perFT}, with grid spacing of $h_0$.
Since the integrand in that integral is continuous, the convergence claim
holds.
\end{proof}

From now on  we restrict attention to the \Mat case (Example
\ref{ex:Mat}). Our first result shows that $\lambdaext_\bsk(\ell)$ is
exponentially close to the full range Fourier transform
$\wrho(\bsxi_\bsk)$ (given in \eqref{eq:FT}), and this holds true
uniformly in $\ell$ and $\bk$.

\begin{lemma}\label{thm:decay2}
For the case of the \Mat kernel, we have
\begin{equation}\label{eq:bound1}
\left\vert \lambdaext_\bsk(\ell) - \wrho(\bsxi_\bsk) \right\vert \ \lesssim \
\lambda^d  5^{\nu}
\nu^{d/2-1} \exp\left(- \sqrt{\frac{\nu}{2}} \frac{\ell}{\lambda}\right) \, , \quad
\text{for all} \quad \ell \geq 1 \quad \text{and} \quad \bk \in \overline{\bbZ}_m^d.
\end{equation}
Thus there exist positive constants $C_3, C_4$ (independent of $h_0, \len,
\lambda$ and  $\nu$), such that, for all $\eps >0$ and all $\bk \in
\oZmd$,
\begin{align} \label{eq:epsilon}\left\vert \lambdaext_\bsk(\ell) - \wrho(\bsxi_\bsk) \right\vert \ \leq \ \eps,
\quad \text{when} \quad   \ell/\lambda \ \geq \ C_3\,  \nu^{1/2}  + C_4\,  \nu^{-1/2}  \log(1/\eps).
 \end{align}
\end{lemma}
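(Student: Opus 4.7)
The plan is to estimate the difference $\lambdaext_\bsk(\ell) - \wrho(\bsxi_\bsk)$ directly from the integral representations available to us. From \eqref{eq:perFT} we have $\lambdaext_\bsk(\ell) = \int_{[-\ell,\ell]^d} \rho(\bsx) \exp(-2\pi\ri\,\bsxi_\bsk\cdot\bsx)\,\rd\bsx$, while by definition \eqref{eq:FT} (and the fact that $\rho\in L^1(\bbR^d)$, as already noted in the proof of Corollary~\ref{cor:PD_isot}), $\wrho(\bsxi_\bsk) = \int_{\bbR^d} \rho(\bsx) \exp(-2\pi\ri\,\bsxi_\bsk\cdot\bsx)\,\rd\bsx$. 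Subtracting and taking absolute values inside the integral gives the $\bsk$-independent tail bound
\begin{equation*}
\left|\lambdaext_\bsk(\ell) - \wrho(\bsxi_\bsk)\right|
\ \le\ \int_{\bbR^d\setminus[-\ell,\ell]^d} |\rho(\bsx)|\,\rd\bsx.
\end{equation*}

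Next I would exploit the isotropy $\rho(\bsx) = \kappa(\|\bsx\|_2/\lambda)$ from \eqref{defmatern}, together with the inclusion $\bbR^d\setminus[-\ell,\ell]^d = \{\|\bsx\|_\infty>\ell\} \subset \{\|\bsx\|_2>\ell\}$ (which holds because $\|\bsx\|_\infty\le\|\bsx\|_2$), to pass to spherical coordinates:
\begin{equation*}
\int_{\bbR^d\setminus[-\ell,\ell]^d}|\rho(\bsx)|\,\rd\bsx
\ \le\ c_d\int_{\ell}^\infty r^{d-1}\,|\kappa(r/\lambda)|\,\rd r
\ =\ c_d\,\lambda^d\int_{\ell/\lambda}^\infty r^{d-1}\,|\kappa(r)|\,\rd r,
\end{equation*}
with $c_d$ the surface area of the unit sphere in $\bbR^d$. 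The resulting integral is exactly the one estimated in \eqref{eq:nowRHS2} during the proof of Theorem~\ref{cor:matern-growth}; indeed, the remark there records that the bound is valid with $\ell-h_0$ replaced by $\ell$. Plugging this in yields \eqref{eq:bound1}.

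For the second statement \eqref{eq:epsilon}, I would simply take the logarithm of \eqref{eq:bound1}: the condition $|\lambdaext_\bsk(\ell)-\wrho(\bsxi_\bsk)|\le\eps$ is implied by
\begin{equation*}
\sqrt{\nu/2}\ \ell/\lambda \ \gtrsim\ \log(1/\eps) + \nu\log 5 + (d/2-1)\log\nu + d\log\lambda + O(1),
\end{equation*}
i.e.\ by $\ell/\lambda \ge C_3\,\nu^{1/2} + C_4\,\nu^{-1/2}\log(1/\eps)$ for suitable $C_3, C_4$. Specifically, $\sqrt{2/\nu}\log(1/\eps)$ contributes the $C_4\,\nu^{-1/2}\log(1/\eps)$ term with $C_4=\sqrt{2}$, the term $\sqrt{2/\nu}\cdot\nu\log 5 = \sqrt{2}\log 5\cdot\nu^{1/2}$ contributes to $C_3\,\nu^{1/2}$, and the remaining terms $\sqrt{2/\nu}\,(d/2-1)\log\nu$ and $\sqrt{2/\nu}\,d\log\lambda$ are uniformly bounded in $\nu\ge 1/2$ (using $\lambda\le 1$ as in the ambient hypotheses), and so can also be absorbed into $C_3\,\nu^{1/2}$.

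The whole argument is essentially a reduction to an estimate that the paper has already proved; there is no substantive new obstacle. The only thing needing care is the bookkeeping in the final logarithm step, where one must verify that every term arising on taking $\log$ of the right-hand side of \eqref{eq:bound1} can be absorbed cleanly into the advertised two-term bound, uniformly in $\nu\ge 1/2$ and in the other parameters.
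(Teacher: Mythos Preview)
Your proof is correct and follows essentially the same route as the paper: bound the difference by the tail integral $\int_{\bbR^d\setminus[-\ell,\ell]^d}|\rho|$, pass to polar coordinates via isotropy, invoke the estimate \eqref{eq:nowRHS2} already established in the proof of Theorem~\ref{cor:matern-growth}, and then take logarithms to extract the condition \eqref{eq:epsilon}. The paper's handling of the logarithm step is slightly terser (it simply notes $\lambda\le 1$, $\nu\ge 1/2$, and $\log(\nu)/\nu\lesssim 1$), but your more explicit term-by-term bookkeeping reaches the same conclusion.
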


\begin{proof}
From \eqref{eq:perFT}, \eqref{eq:FT} and \eqref{defmatern},   it is easy
to see that
\begin{align*}
\left\vert \lambdaext_\bsk(\ell) - \wrho(\bsxi_\bsk) \right\vert
\ \leq \ \int_{\Vert \bsx \Vert_2 \geq \len} \vert \rho(\bsx)\vert  \,\rd \bsx
\sim \int_\len^\infty r^{d-1} \vert \kappa( r/\lambda) \vert \,\rd r
\ = \ \lambda^d \int_{\len/\lambda}^\infty r^{d-1} \kappa(r) \,\rd r ,
\end{align*}
with  $\kappa$ as in \eqref{defmatern}, and again we use Notation
\ref{not:tilde}. Then, using \eqref{eq:nowRHS2}, we obtain
\eqref{eq:bound1}.

Forcing the right-hand side of \eqref{eq:bound1} to be bounded above by
$\eps$, rearranging and taking logs, we obtain a sufficient condition of
the form
$$
 \sqrt{\frac{\nu}{2}} \, \frac{\len}{\lambda} \ \geq \ C_* +  \log(1/ \eps) +
 d \log \lambda  + \nu \log 5 + (d/2-1) \log \nu,
$$
for some parameter-independent constant $C_*$. Recalling that we assume
$\lambda \leq 1$, and using $\nu\ge 1/2$ and $\log(\nu)/\nu\lesssim
1$, the sufficiency condition in \eqref{eq:epsilon} follows.
\end{proof}

With Lemma \ref{thm:decay2} in mind, we now discuss the asymptotic
behaviour of $\wrho(\bsxi_{\bk})$, by making use of the analytic formula
\eqref{eq:Hankel1}. In order to define an appropriate ordering we make the
following definition.
\begin{definition}[{\em Ordering of the integer lattice}]  \label{def:ordering}
Since $\bbZ^d$ is countable we can write its elements as a sequence
$\{\bsk(j): j = 1,2, \ldots\}$, such that $\bsk(1) = \mathbf{0} \in
\bbZ^d$ and such that the sequence $\{\Vert \bsk(j) \Vert_2: j =
1,2,\ldots\}$ is non-decreasing. This ordering is not unique.
\end{definition}

\begin{theorem}\label{thm:decay3}
For the case of the \Mat kernel, we have
\begin{align} 0\  < \ \wrho(\bsxi_{\bsk(j)}) \
\lesssim \ \lambda^{-2\nu} \ (\nu \len^2)^{\nu + d/2} \,
 j^{-(1+2 \nu/d)}, \quad j = 1,2,\ldots\ .\label{eq:decay3}
\end{align}
\end{theorem}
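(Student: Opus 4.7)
My plan is to work directly from the explicit formula \eqref{eq:Hankel1} for $\widehat{\kappa}_d$ and combine it with a standard lattice-point counting estimate. Positivity is immediate: the formula \eqref{eq:Hankel1} writes $\widehat{\kappa}_d$ as a product of strictly positive quantities, and hence $\wrho(\bsxi_{\bsk(j)}) = \lambda^d \widehat{\kappa}_d(\lambda \|\bsk(j)\|_2/(2\ell)) > 0$.

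For the upper bound, I would first dispose of the gamma ratio by the routine estimate $\Gamma(\nu+d/2)/\Gamma(\nu) \lesssim \nu^{d/2}$ uniformly for $\nu \geq 1/2$: for $\nu \geq 1$ this follows from the pointwise expansion (even $d$) or Kershaw-type bounds (odd $d$), while for $1/2 \leq \nu \leq 1$ both $\Gamma(\nu+d/2)$ and $1/\Gamma(\nu)$ are bounded and $\nu^{d/2} \geq 2^{-d/2}$. Combined with $(2\nu)^\nu \lesssim \nu^\nu$ (since $\nu \geq 1/2$), formula \eqref{eq:Hankel1} gives
\begin{equation*}
  \wrho(\bsxi_{\bsk(j)}) \ \lesssim \ \lambda^d \, \frac{\nu^{\nu+d/2}}{\bigl(2\nu + \pi^2 \lambda^2 \|\bsk(j)\|_2^2/\ell^2\bigr)^{\nu+d/2}}.
\end{equation*}
Dropping the $2\nu$ term in the denominator (which is legitimate because both terms are non-negative) and rearranging yields, for $\bsk(j) \neq \bszero$,
\begin{equation*}
  \wrho(\bsxi_{\bsk(j)}) \ \lesssim \ \lambda^{-2\nu}\, (\nu\ell^2)^{\nu+d/2} \, \|\bsk(j)\|_2^{-(2\nu+d)}.
\end{equation*}

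Next I would prove the counting estimate $\|\bsk(j)\|_2 \gtrsim j^{1/d}$ valid for all $j \geq 2$. Because the sequence is ordered by increasing norm, $\bsk(1),\dots,\bsk(j)$ all lie in the closed ball of radius $\|\bsk(j)\|_2$, so
\begin{equation*}
  j \ \le \ \#\bigl\{\bsk \in \bbZ^d : \|\bsk\|_2 \le \|\bsk(j)\|_2\bigr\} \ \le \ C_d\, \bigl(\|\bsk(j)\|_2+1\bigr)^d \ \lesssim \ \|\bsk(j)\|_2^d,
\end{equation*}
where the last step uses $\|\bsk(j)\|_2 \geq 1$ for $j \geq 2$ (since $\bsk(1)=\bszero$ and subsequent lattice points have norm at least one). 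Combining this with the previous display, and noting $(2\nu+d)/d = 1 + 2\nu/d$, gives the desired bound for $j \geq 2$.

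The only remaining point is $j=1$, where $\bsk(1)=\bszero$ and the counting bound is not available. Here I would simply observe that $\wrho(\bszero) \lesssim \lambda^d \nu^{-d/2}(2\nu)^\nu(2\nu)^{-(\nu+d/2)} \cdot \nu^{d/2} \lesssim \lambda^d$, which is dominated by $\lambda^{-2\nu}(\nu\ell^2)^{\nu+d/2}$ because $\lambda \leq 1$, $\nu \geq 1/2$ and $\ell \geq 1$. I don't foresee any substantive obstacle in this plan; the only mild care required is the uniform-in-$\nu$ handling of the gamma ratio, which is exactly the same kind of estimate used in \eqref{eq:Gamma1}--\eqref{eq:Gamma2}, and keeping constants independent of $\lambda, \ell, \nu$ throughout.
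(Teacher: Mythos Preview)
Your proposal is correct and follows essentially the same route as the paper: the explicit formula \eqref{eq:Hankel1}, the gamma-ratio bound $\Gamma(\nu+d/2)/\Gamma(\nu)\lesssim\nu^{d/2}$, dropping the additive $\nu$-term in the denominator, and then the lattice-count estimate $\|\bsk(j)\|_2\gtrsim j^{1/d}$. The only differences are cosmetic: the paper obtains the counting bound via the two-sided cube inclusion $[-d^{-1/2}\|\bsk(j)\|_2,d^{-1/2}\|\bsk(j)\|_2]^d\subseteq T(j)\subseteq[-\|\bsk(j)\|_2,\|\bsk(j)\|_2]^d$ rather than your ball estimate, and it does not single out $j=1$ (your separate treatment of $\bsk(1)=\bszero$ is in fact a bit more careful, since the paper's step of dropping the $\nu$-term is vacuous there).
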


\begin{proof}
By \eqref{eq:Hankel1}  and the definition $\bsxi_\bsk = \bsk/(2\ell)$, we
have
\begin{align}
0 \ < \ \wrho(\bsxi_\bsk) &\ \sim \ \lambda^d \nu^{\nu} \, \frac{\Gamma(\nu+d/2)}{\Gamma(\nu)} \,
\left(\nu +  \pi^2 \lambda^2\Vert \bsk\Vert_2^2 / (2 \len^2)\right)^{-(\nu + d/2)} .
\nonumber
\end{align}
Now by \eqref{eq:Gamma1} we have (for even  $d$), $ {\Gamma(\nu + d/2) } \
\lesssim \  \nu^{d/2} \,    {\Gamma(\nu)} $. Moreover the same estimate
also holds for $d$ odd, as can be seen by employing the first equation in
\eqref{eq:Gamma2}, and then the estimate $ \Gamma(\nu + 1/2) \ \lesssim \
\nu^{1/2} \Gamma(\nu)$ (from \cite[eq. (1.3)]{Ker}). Hence we have the
upper bound
\begin{align*}
 \wrho(\bsxi_\bsk) &\ \lesssim \ \lambda^d \nu^{\nu + d/2} \,
  \left(\nu + \pi^2 \lambda^2\Vert \bsk\Vert_2^2 / (2 \len^2)\right)^{-(\nu + d/2)}
  \ \lesssim  \ \lambda^d \left(\nu\ell^2\right)^{\nu + d/2}
  (\lambda \Vert \bk \Vert_2)^{-(2 \nu + d)} \\
  &\ = \ \lambda^{-2\nu}(\nu \ell^2)^{\nu+d/2} \|\bk\|_2^{-(2\nu+d)}.
\end{align*}

Now, to  complete the proof of \eqref{eq:decay3}, we shall show that
\begin{align}
 \Vert \bsk(j) \Vert_2 \ \sim \ j^{1/d} \ .
 \label{eq:order}
\end{align}
To obtain \eqref{eq:order}, for each $j$, define the set $T(j) := \{ \bsk
\in \bbZ^d : \Vert \bsk \Vert_2 \, \leq \, \Vert \bsk(j) \Vert _2 \}$.
Then $T(j)$ is a superset and a subset of two cubes:
\begin{align*} 
 [-d^{-1/2} \Vert \bsk(j)\Vert_2, d^{-1/2} \Vert \bsk(j) \Vert_2]^d
 \ \subseteq \  T(j) \ \subseteq \ [-\Vert \bsk(j)\Vert_2, \Vert \bsk(j) \Vert_2]^d\ .
\end{align*}
(The right inclusion follows from the definition of $T(j)$. On the other
hand if $\bsk$ lies in the left-most cube, then $\Vert \bsk \Vert_\infty
\leq d^{-1/2} \Vert \bsk(j) \Vert_2$ which implies $\Vert \bsk \Vert_2
\leq \Vert \bsk(j) \Vert_2$, i.e., $\bsk \in T(j)$.) Then it follows that
$\# T(j) \,  \sim \,  \Vert \bsk(j)  \Vert_2 ^d$. But also by definition
$T(j)$ contains $j-1$ elements of $\bbZ^d$. Hence \eqref{eq:order}
follows.
\end{proof}

Combining Lemma~\ref{thm:decay2} and Theorem~\ref{thm:decay3} with Theorem
\ref{cor:matern-growth}, we obtain the following  criterion which
simultaneously guarantees the positivity of all the eigenvalues of $\Rext$
and provides an  upper bound for $\lambda_{\bk(j)}(\ell)$, explicit in the
parameters $h_0, \ell, \lambda$ and $\nu$.
\begin{corollary} \label{cor:decay4} For the \Mat kernel,
\text{suppose } \quad
$$
 \ell/\lambda \ \geq \ \max\{ C_1,  C_3 \nu^{1/2}\} \,    +
 \max\{C_2 \nu^{1/2} ,  C_4 \nu^{-1/2}\}  \log\left(\max\{ \lambda/h_0, \nu^{1/2}\}\right).
$$
Then  $\Lambdaext_{\bk}(h_0,\ell) \geq 0$ for all $\bk \in \oZmd$ and also
\begin{align} \label{eq:decaylast}
 \lambdaext _{\bk(j)}(\ell) \ \leq \ \min\{ h_0/\lambda, \nu^{-1/2} \} \ + \
 \lambda^{-2\nu} \ (\nu \len^2)^{\nu + d/2} \,  j^{-(1+2 \nu/d)}, \quad j = 1,2,\ldots\
\end{align}
\end{corollary}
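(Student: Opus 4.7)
The plan is to assemble the corollary from three already-established results: Theorem \ref{cor:matern-growth} (positive definiteness of $\Rext$), Lemma \ref{thm:decay2} (exponentially small difference between matrix-related quantity $\lambdaext_\bk(\ell)$ and the full-range transform $\wrho(\bsxi_\bk)$), and Theorem \ref{thm:decay3} (algebraic decay of $\wrho(\bsxi_{\bk(j)})$ in $j$). The key bookkeeping insight is that the hypothesis, namely
\[
 \ell/\lambda \ \geq \ \max\{ C_1,  C_3 \nu^{1/2}\}  +  \max\{C_2 \nu^{1/2} ,  C_4 \nu^{-1/2}\}  \log\!\left(\max\{ \lambda/h_0, \nu^{1/2}\}\right),
\]
is exactly designed so that it simultaneously implies the hypothesis of Theorem~\ref{cor:matern-growth} and a specific instance of the hypothesis of Lemma~\ref{thm:decay2}.

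First, I would observe that because $\max\{C_1,C_3\nu^{1/2}\}\ge C_1$ and $\max\{C_2\nu^{1/2},C_4\nu^{-1/2}\}\ge C_2\nu^{1/2}$, the hypothesis implies
$\ell/\lambda \ge C_1 + C_2\nu^{1/2}\log(\max\{\lambda/h_0,\nu^{1/2}\})$, which is precisely condition \eqref{eq:alphah0small} of Theorem~\ref{cor:matern-growth}. Hence $\Rext$ is positive definite, giving $\Lambdaext_\bk(h_0,\ell)\ge 0$ for all $\bk\in\oZmd$. This disposes of the first conclusion.

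Next, for the eigenvalue bound, I would set $\eps := \min\{h_0/\lambda,\nu^{-1/2}\}$, so that
$\log(1/\eps) = \log(\max\{\lambda/h_0,\nu^{1/2}\})$. Then the hypothesis gives
$\ell/\lambda \ge C_3\nu^{1/2} + C_4\nu^{-1/2}\log(1/\eps)$, which is the sufficient condition \eqref{eq:epsilon} of Lemma~\ref{thm:decay2} for this choice of $\eps$. Therefore, uniformly in $\bk(j)\in\oZmd$,
\[
 \lambdaext_{\bk(j)}(\ell) \ \le\ \wrho(\bsxi_{\bk(j)}) \ +\ \min\{h_0/\lambda,\,\nu^{-1/2}\}.
\]
Applying Theorem~\ref{thm:decay3} to estimate $\wrho(\bsxi_{\bk(j)})$ from above by $\lambda^{-2\nu}(\nu\ell^2)^{\nu+d/2}j^{-(1+2\nu/d)}$ yields \eqref{eq:decaylast}.

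This is essentially a combinatorial assembly rather than fresh analysis, so there is no substantive obstacle; the only point requiring care is to verify that the "max of maxes" hypothesis in the corollary simultaneously dominates both the positive-definiteness threshold and the threshold needed in Lemma~\ref{thm:decay2} for the judicious choice $\eps=\min\{h_0/\lambda,\nu^{-1/2}\}$, as sketched above.
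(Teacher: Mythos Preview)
Your proposal is correct and matches the paper's approach exactly: the paper simply states that the corollary follows by combining Theorem~\ref{cor:matern-growth}, Lemma~\ref{thm:decay2} and Theorem~\ref{thm:decay3}, and your argument spells out precisely how the single ``max of maxes'' hypothesis dominates both thresholds, with the key choice $\eps=\min\{h_0/\lambda,\nu^{-1/2}\}$ so that $\log(1/\eps)=\log(\max\{\lambda/h_0,\nu^{1/2}\})$.
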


\begin{discussion} \label{conj:decay}
In the paper \cite{paper2} the authors of this paper analyse the
convergence of Quasi-Monte Carlo methods for uncertainty quantification of
certain PDEs with random coefficients, realisations of which are computed
using the circulant embedding technique described here. A
dimension-independent convergence estimate for the QMC method is proved
there under the assumption that given $d$ and covariance function $\rho$,
there exist $p<1$ and $C>0$ such that for all integers $m_0$, with $m$
chosen as in Algorithm~\ref{alg1} and $s=(2m)^d$, we have
\begin{equation} \label{eq:QMCcriterion}
 \sum_{\bk\in {\overline{\bbZ}_m^d}}
 \bigg(\frac{\Lambda^{\rm ext}_{s,\bk}}{s} \bigg)^{p/2}  \ \leq \ C\ .
\end{equation}
If $\Lambdaext_{s,j}$ for $j=1,\ldots,s$ denotes the eigenvalues
$\Lambdaext_{s,\bk}$ reordered so as to be nonincreasing, then a
sufficient condition for this to hold is that for some $\beta>1$ and
$C>0$, independent of $j$ and $s$, there holds
\begin{equation}\label{eq:decay_beta}
\sqrt{\frac{\Lambda^{\rm ext}_{s,j}}{s}} \le C j^{-\beta}\quad \text{for} \quad j=1,\ldots,s,
\end{equation}
in which case \eqref{eq:QMCcriterion} holds for $p$ any number in the open
interval $(1/\beta,1)$. Based on the known rate of decay of the analogous
{Karhunen-Lo\`eve} eigenvalues (e.g., \cite{LPS14}, \cite{GKNSSS:15}) and
the fact that  the second term in \eqref{eq:decaylast} decays with the
same rate (and bearing in mind the result of  Theorem \ref{thm:decay1}),
we conjecture that
\begin{align} \label{eq:conjecture}
 \mbox{\textbf{Conjecture}:\quad For the \Mat kernel, \ \eqref{eq:decay_beta} holds with}\quad
 \beta = \frac{1 + 2\nu/d}{2}\ .
\end{align}
Numerical experiments in the next section support this conjecture. Thus we
need to assume that $\nu > d/2$ in order to ensure that $\beta>1$ and
$p<1$. (If $\nu\leq  d/2$ then there is no predicted convergence rate for
the QMC algorithm. A similar theoretical barrier was detected in
\cite{GKNSSS:15}).

The conjecture that $\sqrt{\Lambda^ {\rm ext}_{s,j}/s}\le C
j^{-(1+2\nu/d)/2}$ remains a conjecture in spite of the results in Lemma
\ref{thm:decay2} and Theorem \ref{thm:decay3}. That is because these
results concern the continuous eigenvalues $\lambdaext_{\bk}(\ell)$ and
the result  which connects these with the eigenvalues $\Lambdaext_{\bk}
(h_0,\ell)$ (namely Theorem \ref{thm:decay1}) does not provide enough
information about the small eigenvalues. Indeed it is well known, and
demonstrated in the numerical experiments in the next section, that the
small matrix eigenvalues depart significantly from the corresponding
eigenvalues $\lambda^{\rm{ext}}_{\bk}(\ell)$ of the integral operator.

A sufficient condition for \eqref{eq:decay_beta}, and hence for
\eqref{eq:QMCcriterion}, is that \eqref{eq:decay_beta} holds not for all
eigenvalues  but for some fixed fraction of the eigenvalues, say $\alpha$,
starting from the largest. Suppose, for example, that for some fixed
$\alpha \in (0,1)$ we have
\[
{\sqrt{\frac{\Lambda^{\rm ext}_{s,j}}{s}}}
\le C j^{-\beta}\quad \text{for} \quad j=1,\ldots,\lceil \alpha s \rceil.
\]
Then because the eigenvalues are ordered, we also have, for $j=\lceil
\alpha s \rceil + 1,\ldots, s$,
\[
{\sqrt{\frac{\Lambda^{\rm ext}_{s,j}}{s}}}
\le C \lceil \alpha s \rceil^{-\beta}\
= C j^{-\beta} \left(\frac{j}{ \lceil \alpha s \rceil}\right)^\beta
\le \frac{C}{\alpha^\beta}j^{-\beta},
\]
so that the sufficiency condition \eqref{eq:decay_beta} is satisfied for
all $j = 1, \dots, s$ with an appropriately larger value of the constant.
\end{discussion}

\section{Numerical Experiments}
\label{sec:Numerical}

In this section we perform numerical experiments illustrating the
theoretical results given above, for the \Mat covariance in 2D and 3D. In
all experiments we set the variance  $\sigma^2 = 1$. In
\S\ref{subsec:posdefexpts}, we illustrate the positive definiteness
results of Theorem \ref{cor:matern-growth}. Then, in \S\ref{subsec:decay},
we illustrate the actual decay of the eigenvalues $\Lambdaext_{s,j}$ and
compare it to \eqref{eq:conjecture}.

\subsection{Positive definiteness}
\label{subsec:posdefexpts}

We investigate experimentally the minimal value of $\ell$ needed to ensure
that the extended circulant matrix $\Rext$ is positive definite.  Recall
that Theorem \ref{positivityoftransf}  guarantees the existence of such an
$\ell$ and Theorems \ref{cor:matern-growth} and \ref{thm:PDGauss} quantify
its behaviour in the \Mat case.

The plots in Figure~\ref{fig:1} show the behaviour of $\ell$ as a function
of $\log h_0^{-1} = \log m_0$ for various choices of $d$, $\nu< \infty$
and $\lambda$. They clearly show that $\ell$ depends linearly  on $\log
m_0$,  for $m_0$ large enough. They also indicate that $\ell$ gets smaller
if either $\lambda$ or $\nu$ gets smaller, all in accordance with
Theorem~\ref{cor:matern-growth}. For fixed $d$, $h_0$ and $\lambda$,
Theorem~\ref{cor:matern-growth} gives a lower bound  on $\ell$ which grows
like $\nu^{1/2}$. We observe this behaviour clearly for $d=2$, but the
growth with respect to $\nu$ is a bit slower for  $d=3$. (See
Figure~\ref{fig:2}, which  plots $\log \ell$ against $\log \nu$.) The
small triangle embedded in each graph indicates $\mathcal{O}(\nu^{1/2})$
growth.

\begin{table}
\centering
\begin{tabular}{|l|l|l|l|}
\hline
$\lambda$ & $m_0$ & $\ell$ ($d=2$) & $\ell$ ($d=3$)\\
\hline $1$ & $8$ & $8$ & $9$\\
\hline $0.5$ & $16$ & $4$ & $4.5$\\
\hline $0.25$ & $32$ & $2$ & $2.25$\\
\hline $0.125$ & $64$ & $1$ & $1.125$ \\
\hline
\end{tabular}
\bigskip
\caption{Illustration of Theorem \ref{thm:PDGauss}: Values of $\ell$
required to ensure positive definiteness of $\Rext$ in the case $\nu =
\infty$, with $\lambda$ decreasing and $m_0 \lambda = 8$ fixed.
}\label{tab:1}
\end{table}

Table \ref{tab:1} illustrates the result of Theorem \ref{thm:PDGauss}
which covers the case $\nu = \infty$. Here we tabulate the value of $\ell$
needed to ensure positive definiteness of $\Rext$, when decreasing
$\lambda$ and keeping $m_0\lambda$ fixed at $8$. Theorem \ref{thm:PDGauss}
gives a bound which decreases linearly in $\lambda$, until the minimum
$\ell = 1$ is reached. This behaviour is exactly as observed in Table
\ref{tab:1}. In this case $\Rext$ has many very small eigenvalues and we
deem it to be positive definite when all eigenvalues are positive,
ignoring those eigenvalues which are less than $10^{-13}$ in modulus.

\begin{figure}
\centering
\begin{tabular}{l} 
  \hspace*{-4mm}
  \includegraphics[scale=0.63,trim=0 0 0 0,clip]{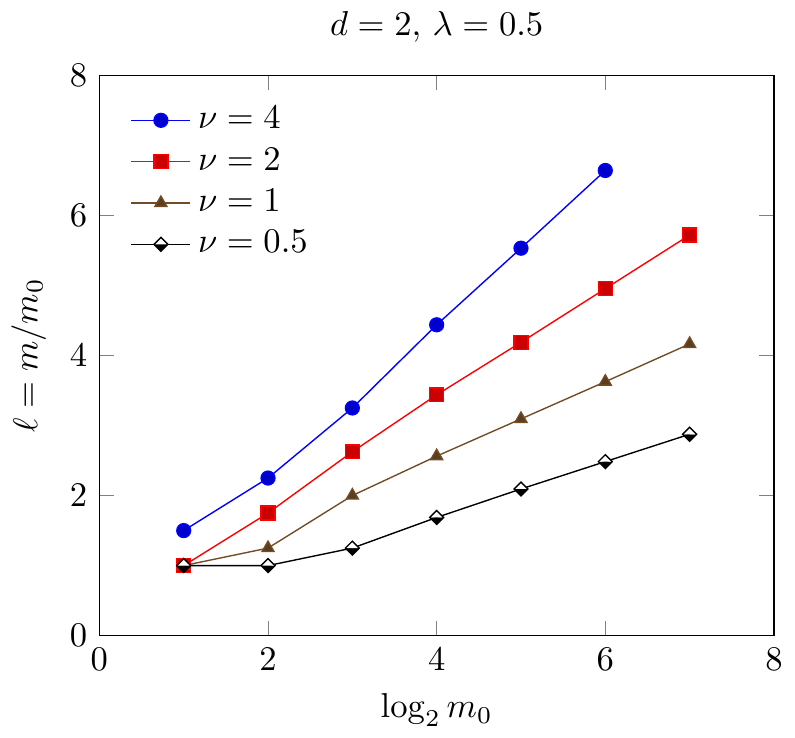}
  \includegraphics[scale=0.63,trim=14bp 0 0 0,clip]{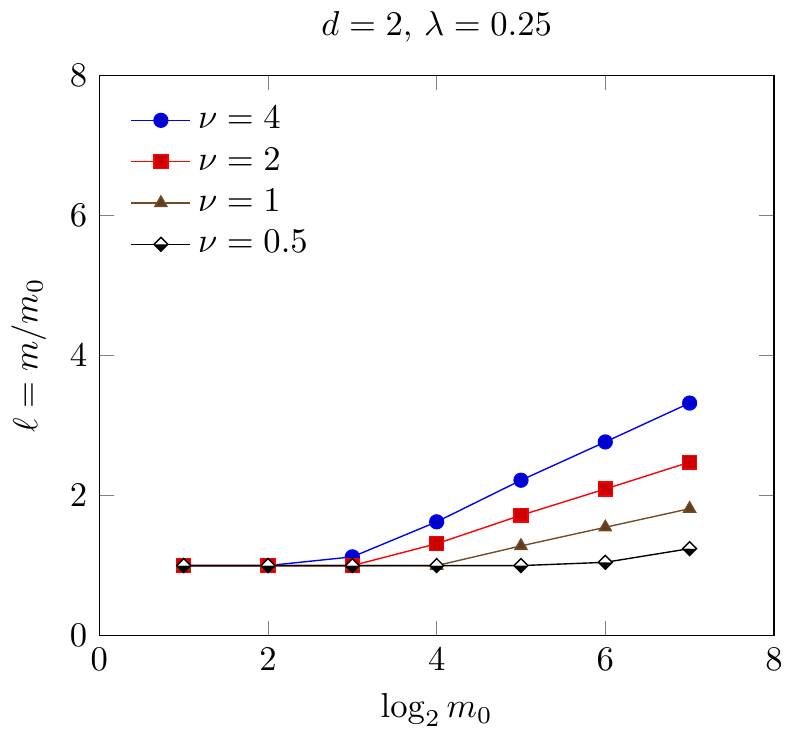}
  \includegraphics[scale=0.63,trim=14bp 0 0 0,clip]{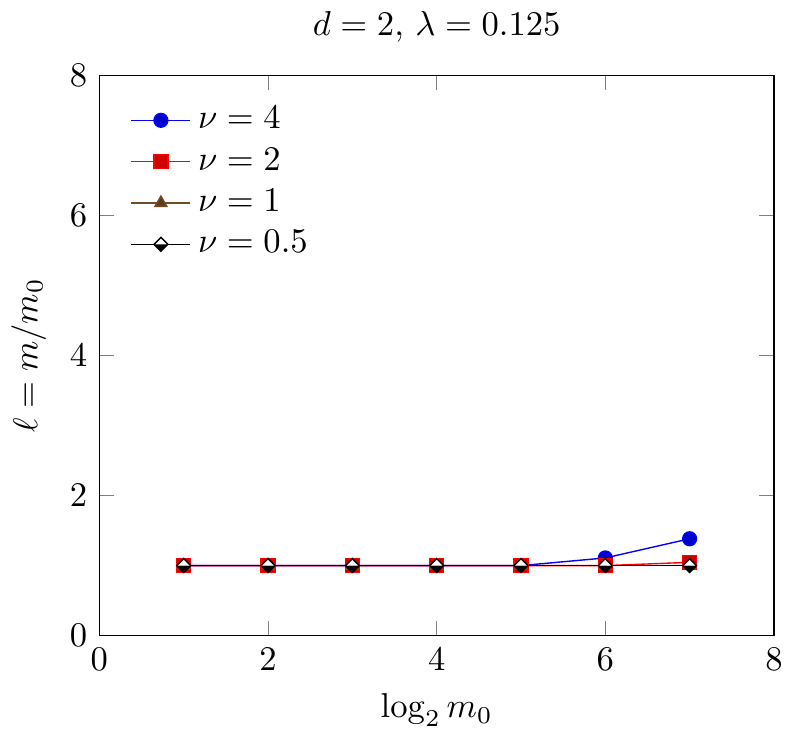} \\[2mm]
  \hspace*{-4mm}
  \includegraphics[scale=0.63,trim=0 0 0 0,clip]{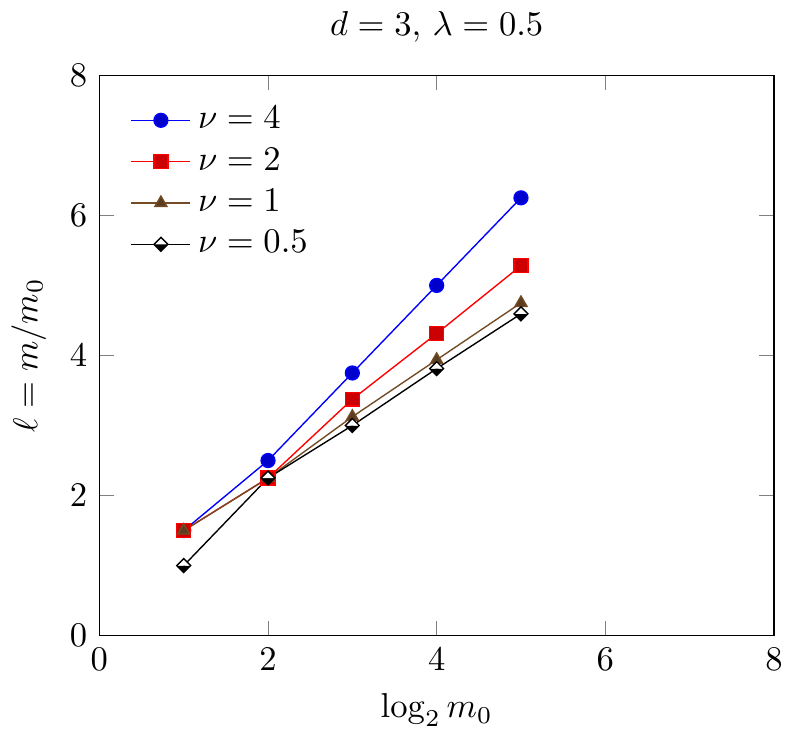}
  \includegraphics[scale=0.63,trim=14bp 0 0 0,clip]{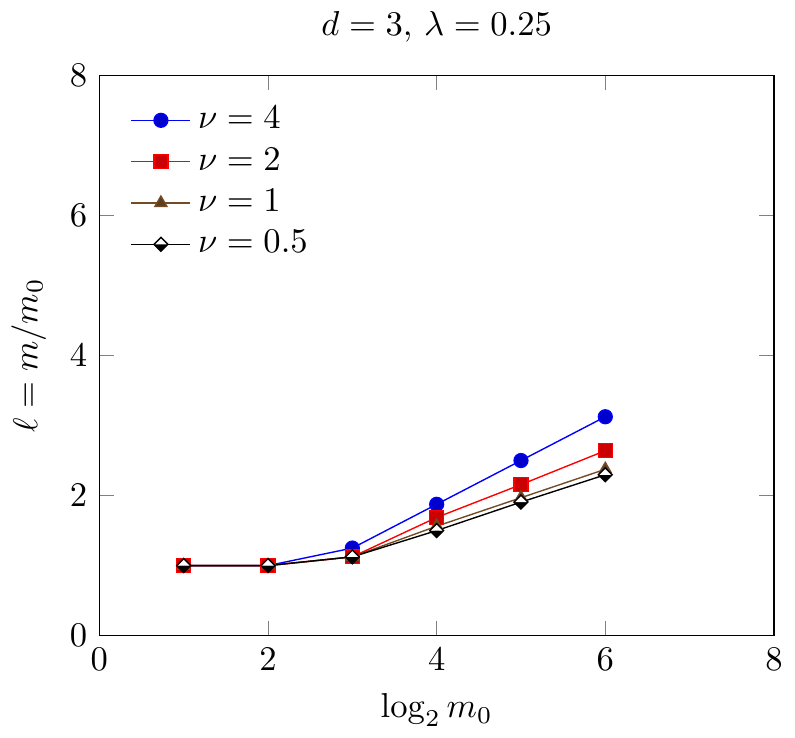}
  \includegraphics[scale=0.63,trim=14bp 0 0 0,clip]{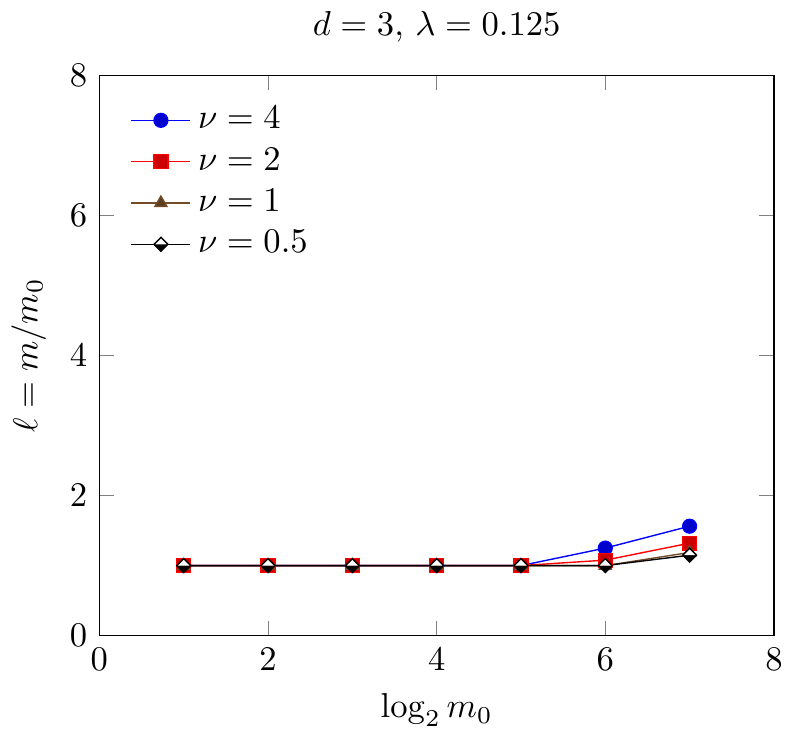}
\end{tabular}
\caption{Illustration of Theorem \ref{cor:matern-growth}: Graphs of the minimum {needed} value of $\ell$ {to obtain positive definiteness} against {$\log_2 h_0^{-1} = \log_2 m_0$} for different choices of {$d$, $\lambda$ and $\nu$}.
{The graphs show a linear relationship with lower values of $\ell$ for smaller $\lambda$ and for smaller $\nu$.}
\label{fig:1}}
\end{figure}

\begin{figure}
\centering
\begin{tabular}{l} 
  \hspace*{-3mm}
  \includegraphics[scale=0.63,trim=0 0 0 0,clip]{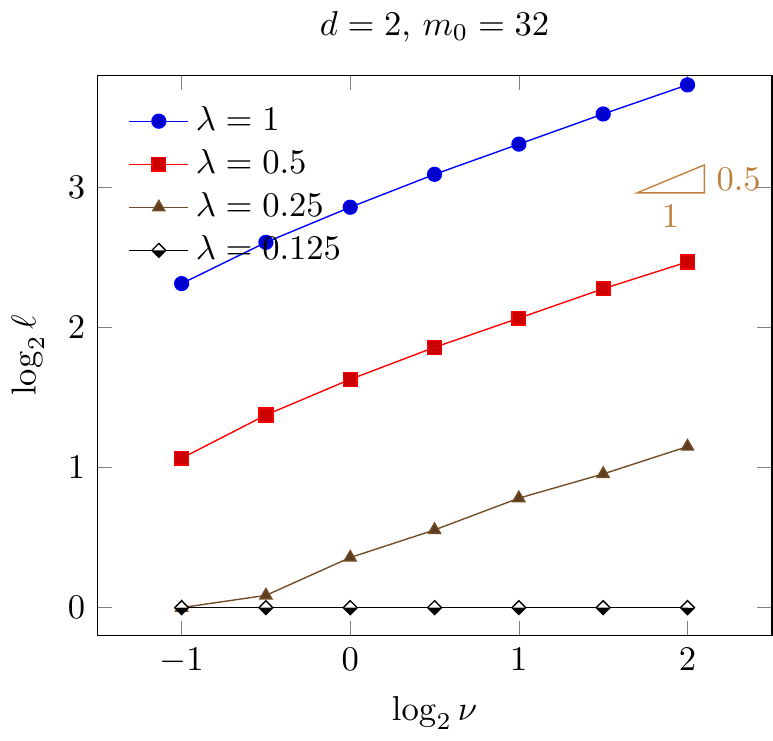}
  \includegraphics[scale=0.63,trim=14bp 0 0 0,clip]{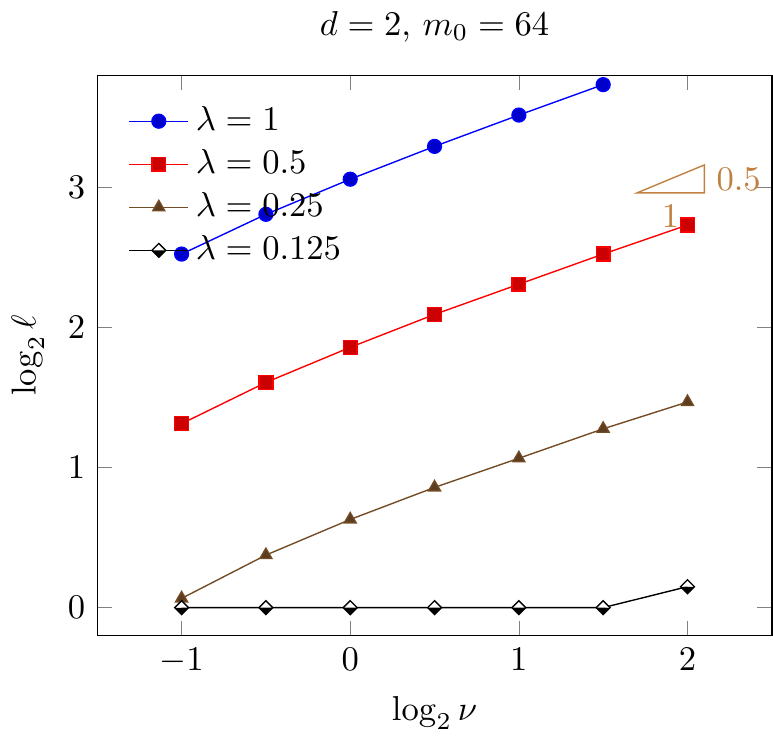}
  \includegraphics[scale=0.63,trim=14bp 0 0 0,clip]{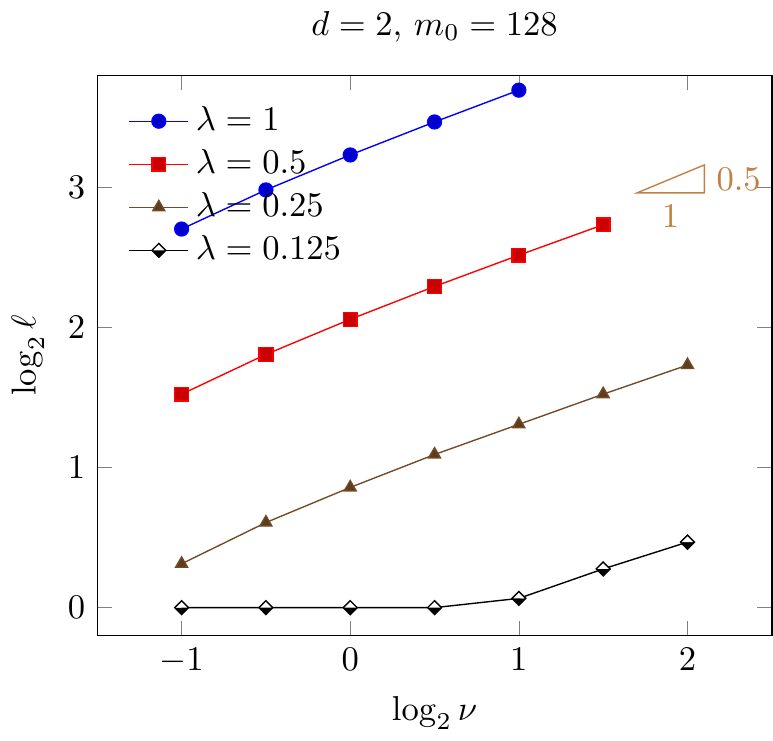} \\[2mm]
  \hspace*{-3mm}
  \includegraphics[scale=0.63,trim=0 0 0 0,clip]{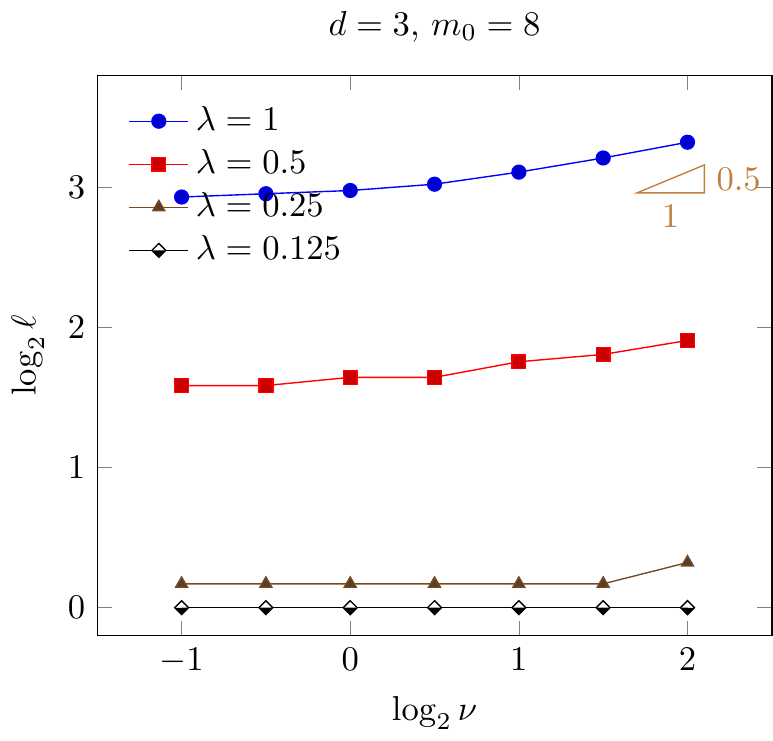}
  \includegraphics[scale=0.63,trim=14bp 0 0 0,clip]{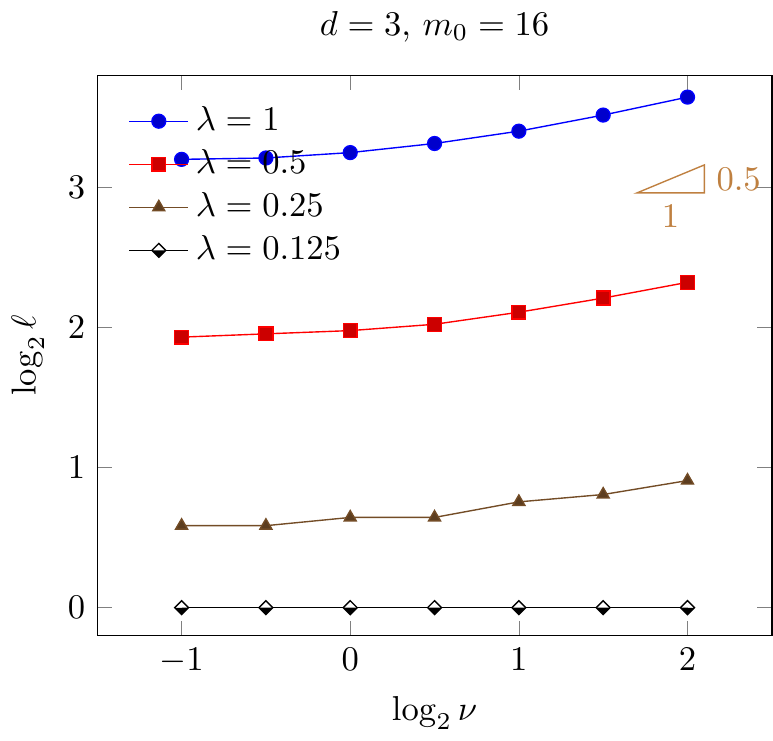}
  \includegraphics[scale=0.63,trim=14bp 0 0 0,clip]{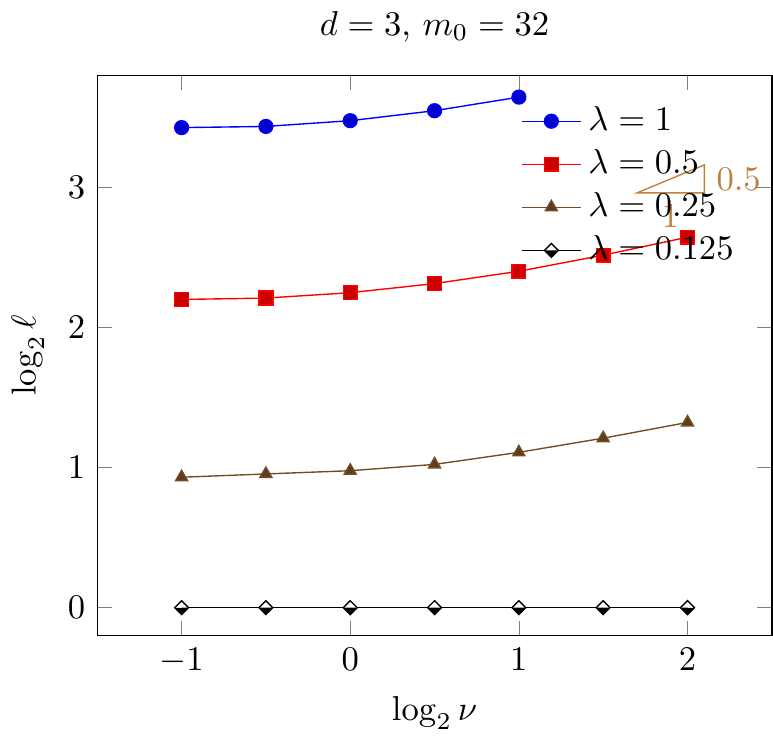}
\end{tabular}
\caption{Illustration of Theorem \ref{cor:matern-growth}:
{Log-log graphs} of the minimum {needed} value of $\ell$ {to obtain positive definiteness} against $\nu$
for various choices of {$d$, $m_0$ and $\lambda$}. {The small
triangle depicts a gradient of $0.5$}. \label{fig:2}}
\end{figure}

\subsection{Eigenvalue decay}
\label{subsec:decay}

Here we perform experiments to verify the decay conjecture
\eqref{eq:conjecture}. In Figures~\ref{fig:3a} and~\ref{fig:3b} we present
log-log plots of $\sqrt{\Lambdaext_{s,j}/s}$ against the index~$j$ for
various choices of $d$, $\nu$ and $\lambda$. For decreasing $h_0$ (i.e.,
increasing $m_0$) we determine the minimum $\ell$ to achieve positive
definiteness and make the plot for each case. If the conjecture
\eqref{eq:conjecture} holds, then we expect to see a polynomial decay of
rate $-(1 + 2\nu / d)/2$, as shown in the slope triangle in each case.
(The convergence may tail off eventually; note that Theorem
\ref{thm:decay1} does not provide an explicit convergence rate.) We see
that the computations closely follow the prediction
of~\eqref{eq:conjecture}. Moreover, to have the constant $C$
in~\eqref{eq:QMCcriterion} for the minimal embedding to be absolutely
bounded independent of $h_0$ (and thus also independent of $s$, $m$ and
$\ell$) we observe the lines to get closer together for increasing $m_0$.

Thus, while the numerical evidence supports conjecture
\eqref{eq:conjecture}, it remains an interesting open problem to prove
it.

\begin{figure}
  \centering
  \includegraphics{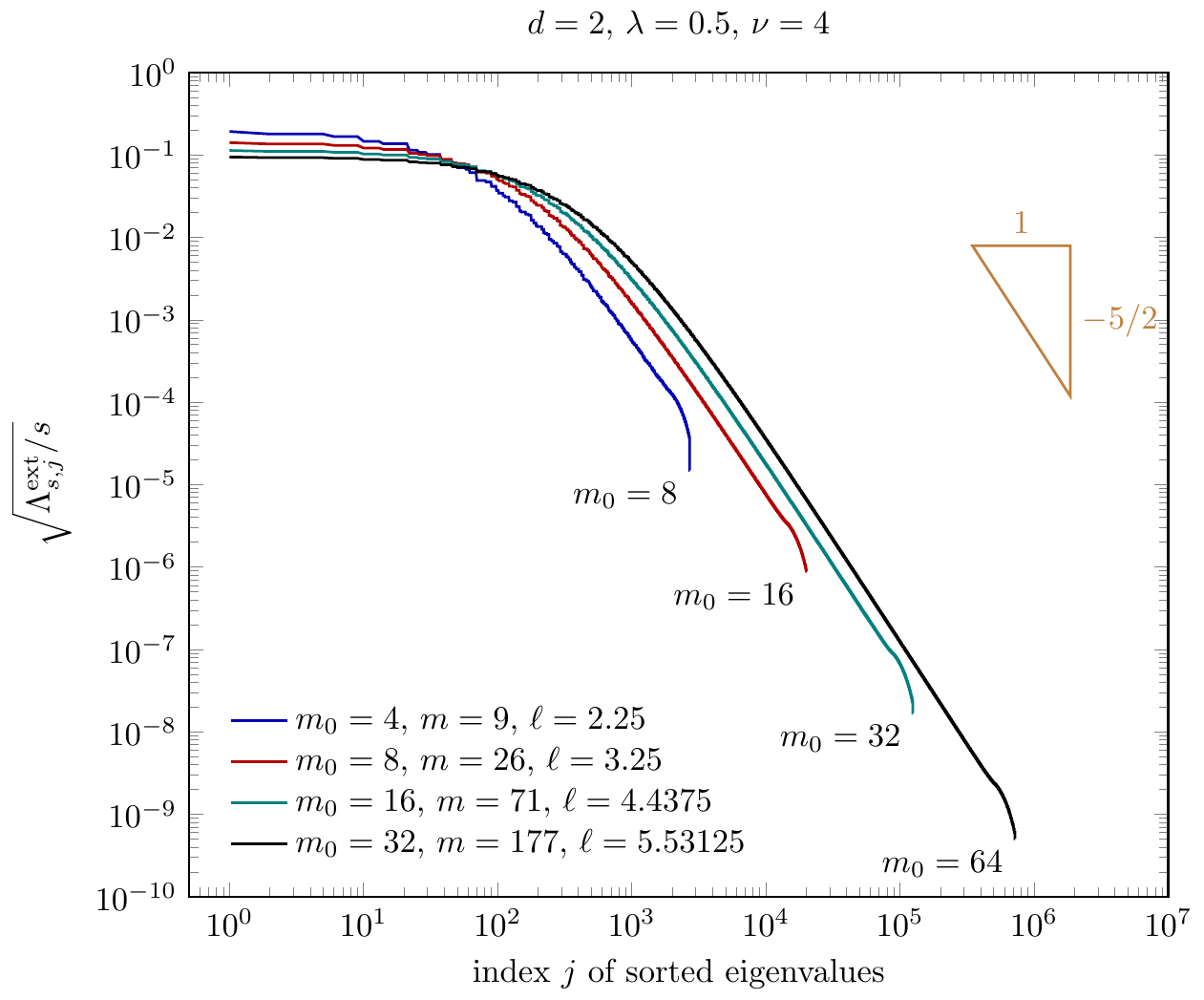} \\[2mm]
  \includegraphics{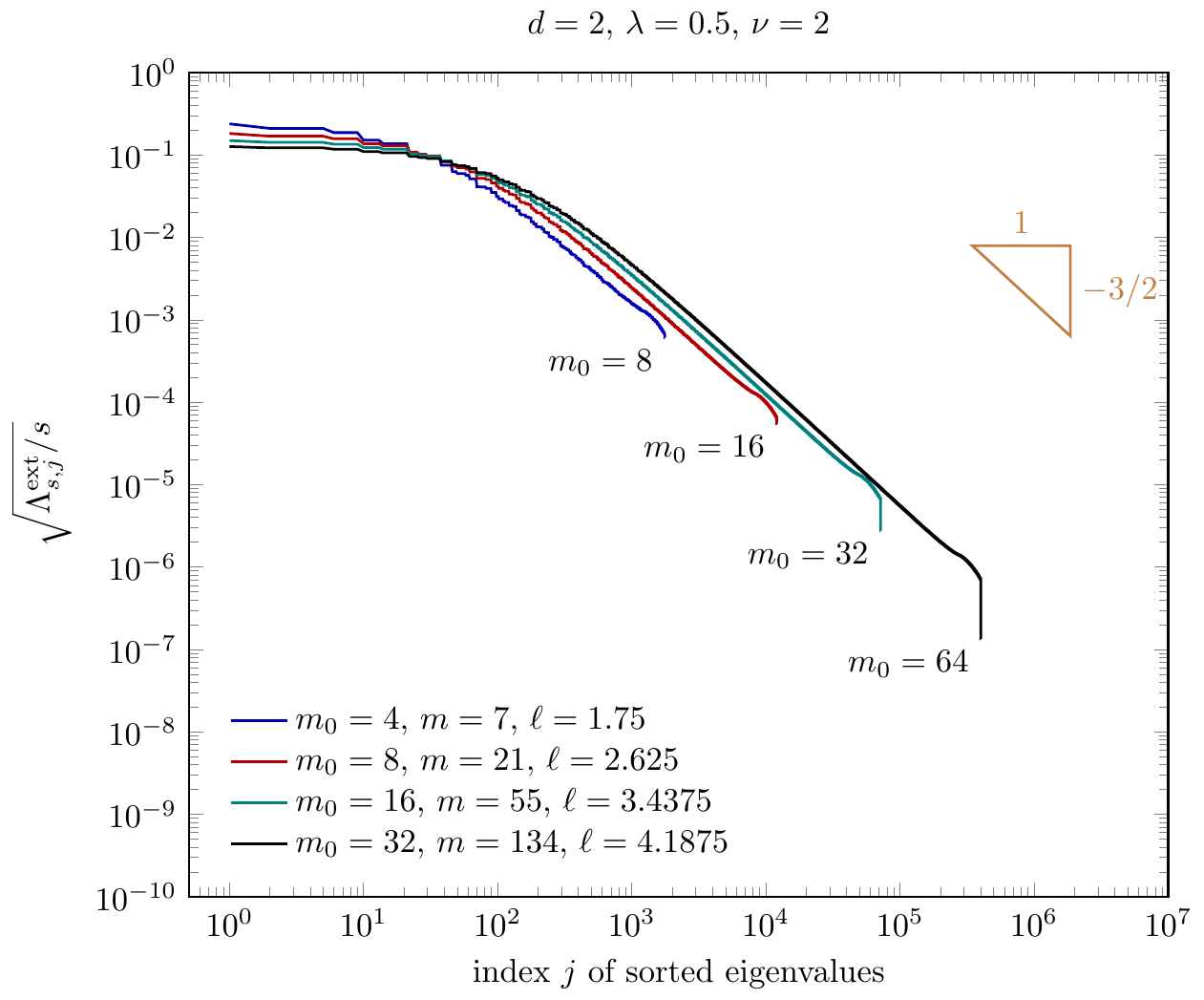}
  \caption{{Loglog plot of the decay of the eigenvalues in case of the minimal embedding
  for $d=2$, $\lambda=0.5$, and $\nu=4$ (top) and $\nu=2$ (bottom).
  The expected decay rates~\eqref{eq:conjecture} are marked by the slope triangles.}}\label{fig:3a}
\end{figure}

\begin{figure}
  \centering
  \includegraphics{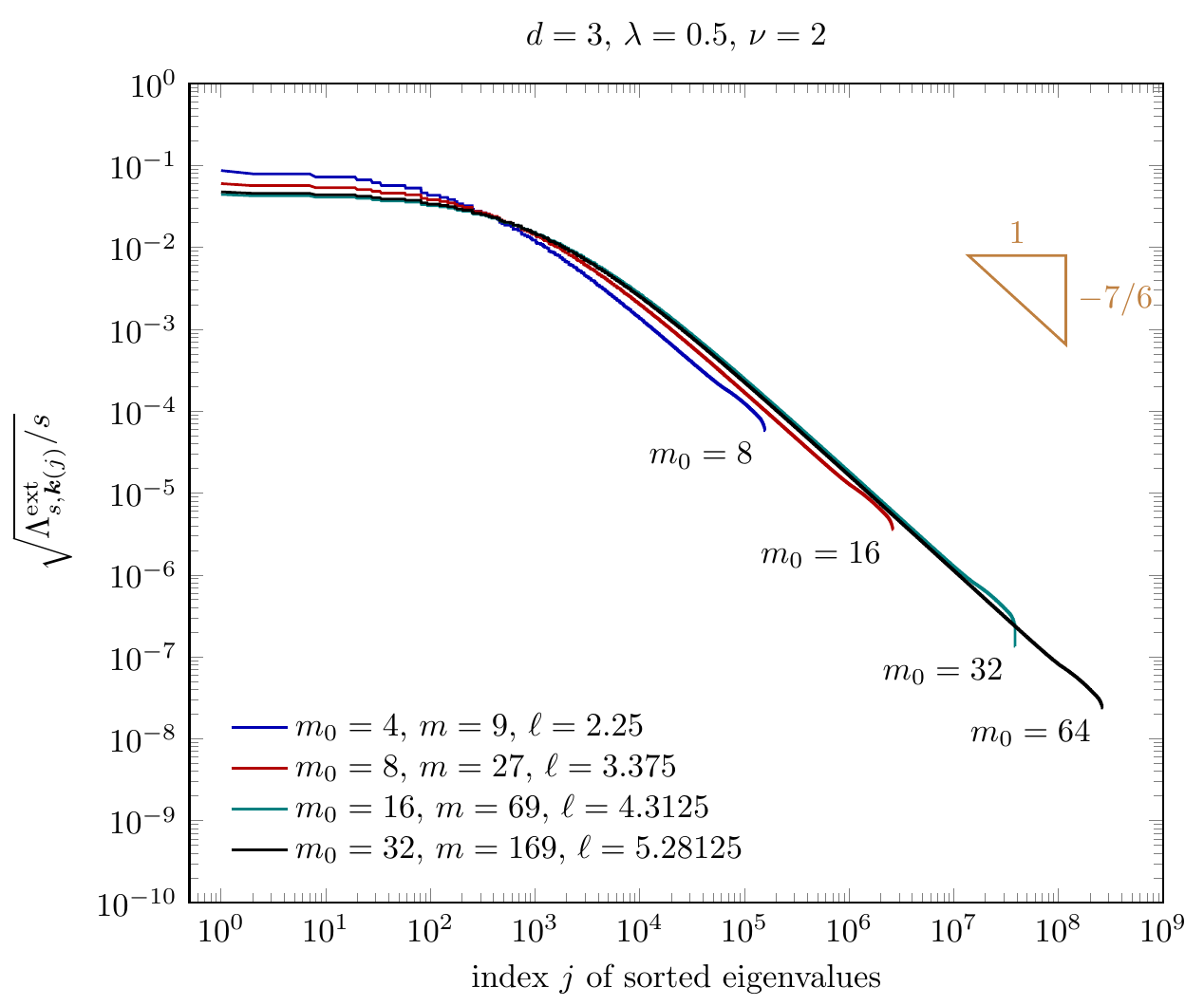}
  \caption{{Loglog plot of the decay of the eigenvalues in case of the minimal embedding
  for $d=3$, $\lambda=0.5$ and $\nu=2$.
  The expected decay rate~\eqref{eq:conjecture} is marked by the slope triangle.}}\label{fig:3b}
\end{figure}

\appendix
\section{Sampling theorem}

The following result is a $d$-dimensional version of what is commonly
called a sampling theorem.  (See \cite{Champ} for a $1$-dimensional
version.)

\begin{theorem}\label{thm:sampling}
Let $\rho \in L^1(\mathbb{R}^d)$ be real-valued and symmetric, with
$\widehat{\rho} \in L^1(\mathbb{R}^d)$. Suppose also that for some
${h}>0$,
\begin{equation}\label{eq:doubles}
 \sum_{\bk \in \bbZ^d} |\rho(h \bk)|<\infty.
\end{equation}
 Then
\begin{align}\label{eq:samplings}
 \sum_{\bk \in \bbZ^d } \rho({h} \bk )\,\exp(-2\pi\ri {h} \bk\cdot\bsxi )
 &\,=\, \frac{1}{{h}^d}\sum_{\br \in \bbZ^d } \widehat\rho\left(\bsxi + \frac{\br}{{h}} \right)
 \qquad \text{for {almost} all }\bsxi \in \mathbb{R}^d .
\end{align}
If $\widehat{\rho}$ is everywhere positive, then for all $\bsxi \in
\mathbb{R}^d$
\begin{align}
 \sum_{\bk\in\bbZ^d}
 \rho(h \bk )\,\exp(-2\pi\ri h \bk\cdot\bsxi )
\,& \ge\, \mathrm{essinf}_{\bszeta\in[-\frac{1}{2h},\frac{1}{2h}]^d}
 \frac{1}{h^d}\sum_{\br \in \bbZ^d }
 \widehat\rho\left(\bszeta + \frac{\br}{h} \right) \nonumber \\
 \,& \ge \,
 {\max_{\br\in \bbZ^d}}\min_{\bszeta\in[-\frac{1}{2},\frac{1}{2}]^d}
 \frac{1}{h^d}
 \widehat\rho\left(\frac{\bszeta+\br}{h}\right)  > 0.
\label{eq:min-bound}
\end{align}
\end{theorem}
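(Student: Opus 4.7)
The plan is to recognize \eqref{eq:samplings} as the $d$-dimensional Poisson summation formula and prove it by matching Fourier coefficients on the torus of period $1/h$. Denote the left-hand side of \eqref{eq:samplings} by $P(\bsxi)$; by the absolute summability hypothesis \eqref{eq:doubles}, $P$ is a uniformly convergent trigonometric series, hence a continuous $(1/h)$-periodic function. Denote the right-hand side by $G(\bsxi)$; since $\widehat{\rho}\in L^1(\bbR^d)$, the series defining $G$ converges in $L^1$ on one period $[-\tfrac{1}{2h},\tfrac{1}{2h}]^d$ and defines a $(1/h)$-periodic element of $L^1_{\mathrm{loc}}(\bbR^d)$.

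First I would compute the $\bk$th Fourier coefficient of $G$ over one period. Interchanging sum and integral by Fubini (justified since $\widehat{\rho}\in L^1$) and performing the translation $\bseta = \bsxi + \br/h$ to consolidate the integrals over $[\br/h,(\br+\bsone)/h]$ into one integral over $\bbR^d$ (using $\exp(2\pi\ri\,\bk\cdot\br)=1$), the coefficient reduces to
\[
\int_{\bbR^d}\widehat{\rho}(\bseta)\exp(-2\pi\ri\,h\bk\cdot\bseta)\,\rd\bseta \,=\, \rho(h\bk),
\]
by the Fourier inversion formula, which applies since $\rho,\widehat{\rho}\in L^1$ (working with the continuous representative of $\rho$, which also satisfies \eqref{eq:doubles}). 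These are precisely the Fourier coefficients of the already-expanded series $P$, so $P=G$ almost everywhere, establishing \eqref{eq:samplings}.

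For \eqref{eq:min-bound}, I would exploit the continuity of $P$: if $P=G$ a.e.\ and $P$ is continuous, then $P(\bsxi) \ge \operatorname{essinf}_\bszeta G(\bszeta)$ pointwise, and by $(1/h)$-periodicity this essinf may be taken over a single period $[-\tfrac{1}{2h},\tfrac{1}{2h}]^d$, giving the first inequality. For the second, positivity of $\widehat{\rho}$ lets me drop all but one term: for any fixed $\br_0\in\bbZ^d$, $G(\bszeta)\ge h^{-d}\widehat{\rho}(\bszeta+\br_0/h)$; taking the essinf in $\bszeta$, then the $\max$ over $\br_0$, and rescaling by $\bszeta'=h\bszeta$ to move the domain to $[-\tfrac12,\tfrac12]^d$, produces the claimed lower bound. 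Since $\widehat{\rho}$ is continuous on $\bbR^d$ (because $\rho\in L^1$), the essinf over the compact set $[-\tfrac12,\tfrac12]^d$ is attained and equals a genuine $\min$, and it is strictly positive by the assumed pointwise positivity of $\widehat{\rho}$.

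The main obstacle is cosmetic rather than analytic: careful bookkeeping of the distinction between pointwise equality and a.e.\ equality, and the passage from $\operatorname{essinf}$ to $\min$ via continuity of $\widehat{\rho}$. The Fubini step and the Fourier inversion formula are the analytic workhorses, but both are immediate under the hypothesis $\rho,\widehat{\rho}\in L^1(\bbR^d)$; once the Fourier coefficients of $G$ are identified with the sequence $\{\rho(h\bk)\}$, the rest of the argument is essentially a consequence of termwise positivity and periodicity.
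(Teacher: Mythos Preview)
Your proposal is correct and follows essentially the same route as the paper: both arguments show the left-hand side is the Fourier series of the periodised $L^1$ function on the right by computing Fourier coefficients via Fubini/dominated convergence, a lattice translation, and Fourier inversion, then deduce the lower bound by taking the essential infimum over a period and discarding all but one positive term. Your treatment is slightly more explicit about why the $\mathrm{essinf}$ becomes a genuine $\min$ (continuity of $\widehat{\rho}$ on a compact set), a point the paper leaves implicit.
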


\begin{proof}
Note first that $\rho$ and $\wrho$ are both continuous because of the
assumed integrability of $\wrho$ and $\rho$ respectively.  Moreover
$\wrho$ is real because of the assumed symmetry of $\rho$. The infinite
sum on the left-hand side of \eqref{eq:samplings} is a well-defined
continuous function of $\bsxi \in \bR^d$, because the convergence is
absolute and uniform by virtue of \eqref{eq:doubles}.

Now consider the right-hand side of \eqref{eq:samplings}, writing it for
convenience as
\[
 g_h(\bsxi) \,:=\,
 \frac{1}{h^d}\sum_{\br \in \bbZ^d} \widehat \rho\left( \bsxi + \frac{\br}{h} \right),
 \quad \bsxi \in \bR^d .
\]
We will show that the function so defined is locally integrable. It is
also manifestly $1/h$ periodic in each coordinate direction, since
$g_h(\bsxi+\bsq/h) = g_h(\bsxi)$ for all $\bsq\in\bbZ^d$.

To show the local integrability of $g_h$ we will show that the sum
defining $g_h$ converges absolutely to an integrable function on
$[0,1/h]^d$. Letting $\Xi$ denote any finite subset of $\mathbb{Z}^d$, we
have
\begin{align*}
 \int_{[0,1/h]^d}\Bigg|\sum_{\br\in\Xi}\widehat{\rho}\left(\bsxi + \frac{\br}{h}\right)\Bigg|
 \,  \rd \bsxi \,
 &\le\,\int_{[0,1/h]^d}\sum_{\br\in\Xi}
 \Bigg|\widehat{\rho}\left(\bsxi + \frac{\br}{h}\right)\Bigg|\, \rd\bsxi
 \,=\, \sum_{\br \in\Xi}\int_{[0,1/h]^d}\left|\widehat{\rho}\left(\bsxi + \frac{\br}{h}\right)\right|
  \,\rd \bsxi \\
 &\,=\,\sum_{\br \in\Xi} \int_{r_1/h}^{(1+r_1)/h} \cdots \int_{r_d/h}^{(1+r_d)/h}|\widehat{\rho}(\bsxi)|
 \,\rd \bsxi\,
 \,\le\, \int_{\mathbb{R}^d}|\widehat{\rho}(\bsxi)|\,\rd \bsxi
 \,<\, \infty,
\end{align*}
where the last step follows from the assumed integrability of $\wrho$. It
then follows from the dominated convergence theorem that $g_h$ is
integrable on $[0,1/h]^d$.

We now show that the left-hand side of \eqref{eq:samplings} is just the
Fourier series of the integrable $(1/h)$-periodic function $g_h$. The
Fourier coefficients of $g_h$ are given by
\[
  \widehat{g_h}(\bk)
  = h^d \int_{-1/(2h)}^{1/(2h)} \cdots \int_{-1/(2h)}^{1/(2h)}
  \left(\frac{1}{h^d}\sum_{\br \in \bbZ^d}
  \widehat \rho\left(\bsxi + \frac{\br}{h}\right)\right)\exp(2\pi\ri h\bsk\cdot\bsxi )\,\rd \bsxi ,
\]
where the unconventional sign in the exponent is valid because $g_h$ is
real and symmetric. The order of integration and summation can be
interchanged by a second application of the dominated convergence
argument, to give
\begin{align*}
 \widehat{g_h}(\bk )
 \, & =\,\sum_{\br \in \bbZ^d} \int_{-1/(2h)}^{1/(2h)} \cdots \int_{-1/(2h)}^{1/(2h)}
 \widehat \rho\left(\bsxi + \frac{\br}{h}\right)
 \exp(2\pi\ri h \bk\cdot\bsxi )\, \rd \bsxi\\
 &=\,\sum_{\br \in \bbZ^d}
 \int_{(-1/2 + r_1)/h}^{(1/2 + r_1)/h}{\cdots\int_{(-1/2 + r_d)/h}^{(1/2 + r_d)/h}}
 \widehat{\rho}(\bsxi)\exp(2\pi\ri h \bk\cdot\bsxi )\,  \rd \bsxi\\
 &=\,\int_{\mathbb{R}^d}\widehat{\rho}(\bsxi)\exp(2\pi\ri h \bk\cdot\bsxi )\,\rd \bsxi
 \, = \, \rho(h \bk),
\end{align*}
where in the last step we used the fact that the inverse Fourier transform
of $\widehat{\rho}$ recovers $\rho$ because of the integrability of
$\rho$. Thus the left-hand side of \eqref{eq:samplings} is just the
Fourier series of $g_h$, as asserted. From this it follows that the
integrable function $g_h$ is equal almost everywhere to to the continuous
function on the left-hand side, so completing the proof of
\eqref{eq:samplings}.

If $\widehat{\rho}$ is a positive function it follows that the left-hand
side of \eqref{eq:samplings} is bounded below by the essential infimum of
$g_h(\bsxi)$ over $\bsxi$, which in turn can be bounded below by retaining
only the largest term in the sum, which is positive.
\end{proof}


\begin{thebibliography}{99}

\bibitem{AS}
 M. Abramowitz and I.A. Stegun. \emph{Handbook of Mathematical Functions}, Dover, New York, 1965.

\bibitem{Ad:81} R.J.~Adler, \emph{The Geometry of Random Fields}, Wiley,
    London, 1981.

\bibitem{BaCoMi:16} M. Bachmayr, A. Cohen and G. Migliorati,
    Representations of Gaussian random fields and approximation of
    elliptic PDEs with lognormal coefficients, \emph{J. Fourier
      Anal. Appl.}, published online 29 March 2017.

\bibitem{Champ} D.C.~Champeney, {\em A Handbook of Fourier Transforms},
    Cambridge University Press, Cambridge, 1987.

\bibitem{ChWo:94} G.~Chan and A.T.A.~Wood, {Simulation of  stationary Gaussian processes in $[0,1]^d$},
\emph{J. Comput. Graph. Stat.},~\textbf{3}, 409-432, 1994.


\bibitem{ChWo:97} G.~Chan and A.T.A.~Wood, {Algorithm AS 312: An
  Algorithm for simulating stationary Gaussian random fields}, \emph{Appl.
  Stat. -- J. Roy. St. C}, \textbf{46}, 171--181, 1997.

\bibitem{DiNe:97} C.R.~Dietrich and G.H.~Newsam, {Fast and exact
    simulation of stationary Gaussian processes through circulant
    embedding of the covariance matrix}, \emph{SIAM J. Sci. Comput.},
    \textbf{18}, 1088--1107, 1997.

\bibitem{EiErUl:07} M.~Eiermann, O.G.~Ernst and E.~Ullmann,
{Computational aspects of the stochastic finite element method},
\emph{Comput. Visual. Sci.}, \textbf{10}(1), 3--15, 2007.

\bibitem{FeKuSl:17} M. Feischl, F.Y. Kuo and I.H. Sloan, Fast random
    field generation with $H$-matrices, to appear in
    \emph{Numer. Math.}, 2018.

\bibitem{GhSp:91} R.G. Ghanem and P.D. Spanos, \emph{Stochastic Finite
    Elements}, Dover, 1991.

\bibitem{GrKuNuScSl:11} I.G.~Graham, F.Y. Kuo, D. Nuyens,  R.~Scheichl and
    I.H. Sloan, Quasi-Monte carlo methods for elliptic PDEs with random
    coefficients and applications, \emph{J. Comput. Phys.}, \textbf{230},
    3668-3694, 2011.

\bibitem{paper2} I.G.~Graham, F.Y. Kuo, D. Nuyens,  R.~Scheichl and I.H.
    Sloan, Circulant embedding with QMC -- analysis for elliptic PDEs with
    lognormal coefficients, {\emph{Preprint
    arXiv:1710.09254}, 25 October 2017, available at} {\tt
    \verb+https://arxiv.org/abs/1710.09254+}

\bibitem{GKNSSS:15} I.G. Graham, F.Y. Kuo, J.A. Nicholls, R. Scheichl, C.
    Schwab and I.H. Sloan, Quasi-Monte Carlo finite element methods for
    elliptic PDEs with log-normal random coefficients, 
{\em Numer.~Math.}, \textbf{131}, 329--368, 2015.

\bibitem{HPS:15} H. Harbrecht, M. Peters and M. Siebenmorgen,
  {Efficient approximation of random fields for numerical
    application}, \emph{Numer. Linear Algebr.},
    \textbf{22}, 596--617, 2015.

\bibitem{Ker} D. Kershaw, {Some extensions of W. Gautschi's inequatities
    for the gamma function}, \emph{Math. Comput.}, \textbf{41}, 607--611, 1983.

\bibitem{KoLiMa:09} B. N. Khoromskij, A. Litvinenko and
  H. G. Matthies, {Application of hierarchical matrices for computing
    the Karhunen-Lo\`eve expansion}, \emph{Computing},
  \textbf{84}(1-2), 49--67, 2009.

\bibitem{KrBo:14} D.P. Kroese and Z.I. Botev,  Spatial Process Generation.
    In: V. Schmidt (Ed.). {\em Lectures on Stochastic Geometry, Spatial
    Statistics and Random Fields}, Volume II: Analysis, Modeling and
    Simulation of Complex Structures, Springer-Verlag, Berlin, 2014.

\bibitem{LPS14}
  G.~Lord, C.~Powell, T.~Shardlow,
  {\em An Introduction to Computational Stochastic PDEs}, Cambridge University
  Press, Cambridge, 2014.

\bibitem{NaHaSu:98a} R.L.~Naff, D.F.~Haley, and E.A.~Sudicky,
  {High-resolution Monte Carlo simulation of flow and conservative
  transport in heterogeneous porous media 1. Methodology and flow
  results}, \emph{Water Resour.~Res.}, \textbf{34}, 663--677, 1998.

\bibitem{NaHaSu:98b} R.L.~Naff, D.F.~Haley, and E.A.~Sudicky,
  {High-resolution Monte Carlo simulation of flow and conservative
  transport in heterogeneous porous media 2. Transport Results}, \emph{Water
  Resour. Res.}, \textbf{34}, 679--697, 1998.

\bibitem{ScTo:06} C. Schwab and R.A. Todor, {Karhunen-Lo\`eve
    approximation of random fields by generalized fast multipole methods},
    \emph{J. Comput. Phys.}, \textbf{217} 100--122, (2006)

\bibitem{wendland05} H.~Wendland, {\em Scattered Data Approximation},
    Cambridge Unversity Press, Cambridge, 2005.

\end{thebibliography}
\end{document}